\numberwithin{equation}{section}
\theoremstyle{plain}
\newtheorem{theorem}[equation]{Theorem}
\newtheorem{corollary}[equation]{Corollary}
\newtheorem{lemma}[equation]{Lemma}
\newtheorem{proposition}[equation]{Proposition}
\theoremstyle{definition}
\newtheorem{definition}[equation]{Definition}
\newtheorem{remark}[equation]{Remark}
\numberwithin{equation}{section}
\newcommand{\R}{{\mathbb R}}
\newcommand{\N}{{\mathbb N}}
\newcommand{\Q}{{\mathbb Q}}
\newcommand{\Om}{\Omega}
\providecommand{\vint}[1]{\mathchoice
          {\mathop{\vrule width 5pt height 3 pt depth -2.5pt
                  \kern -9pt \kern 1pt\intop}\nolimits_{\kern -5pt{#1}}}
          {\mathop{\vrule width 5pt height 3 pt depth -2.6pt
                  \kern -6pt \intop}\nolimits_{\kern -3pt{#1}}}
          {\mathop{\vrule width 5pt height 3 pt depth -2.6pt
                  \kern -6pt \intop}\nolimits_{\kern -3pt{#1}}}
          {\mathop{\vrule width 5pt height 3 pt depth -2.6pt
                  \kern -6pt \intop}\nolimits_{\kern -3pt{#1}}}}
\newcommand{\eps}{\varepsilon}
\newcommand{\loc}{\mathrm{loc}}
\newcommand{\BV}{\mathrm{BV}}
\newcommand{\liploc}{\mathrm{Lip}_{\mathrm{loc}}}
\newcommand{\ch}{\text{\raise 1.3pt \hbox{$\chi$}\kern-0.2pt}}
\DeclareMathOperator{\capa}{Cap}
\DeclareMathOperator{\rcapa}{cap}
\DeclareMathOperator{\dist}{dist}
\DeclareMathOperator{\diam}{diam}
\DeclareMathOperator{\rad}{rad}
\DeclareMathOperator{\Lip}{Lip}
\DeclareMathOperator{\supp}{spt}
\DeclareMathOperator{\fint}{fine-int}
\begin{document}
\title{The Choquet and Kellogg properties for the fine topology when $p=1$ in metric spaces
\footnote{{\bf 2010 Mathematics Subject Classification}: 30L99, 31E05, 26B30.
\hfill \break {\it Keywords\,}: metric measure space, function of bounded variation,
$1$-fine topology,
fine Kellogg property, Choquet property, quasi-Lindel\"of principle
}}
\author{Panu Lahti}
\maketitle

\begin{abstract}
In the setting of a complete metric space that is equipped with a doubling
measure and supports a Poincar\'e
inequality, we prove the fine Kellogg property, the quasi-Lindel\"of principle, and the Choquet property for the fine topology in the case $p=1$.
\end{abstract}

\section{Introduction}

Much of nonlinear potential theory, for $1<p<\infty$, deals with $p$-harmonic functions, which are local minimizers of the $L^p$-norm of $|\nabla u|$.
Such minimizers can be defined also in metric measure spaces by using
\emph{upper gradients},
and the notion can be extended to the case $p=1$ by considering
\emph{functions of least gradient}, which are functions of bounded variation
($\BV$ functions) that minimize the total variation locally  ---
see Section \ref{preliminaries} for definitions.
Nonlinear potential theory for $1<p<\infty$ has by now reached a mature state even
in the general setting of metric spaces
that are equipped with a doubling measure
and support a Poincar\'e inequality, see especially the monograph
\cite{BB} and e.g. \cite{BB-OD,BBS2,BBS3,KiMa,S2}.
Functions of least gradient have been studied less, see however \cite{BDG,MRL,MST,SWZ}
for some previous works in the Euclidean setting, and \cite{HKLS,KLLS} in the metric setting.

In the case $1<p<\infty$, it is known that the \emph{p-fine topology} is the coarsest topology
that makes $p$-superharmonic functions continuous.
For nonlinear fine potential theory and its history in the Euclidean setting, for $1<p<\infty$,
see especially the monographs  \cite{AH,HKM,MZ}.
In the metric setting, fine potential theory for $1<p<\infty$
has been studied recently
in \cite{BBL-SS,BBL-CCK,BBL-WC}.
In these papers, the so-called Cartan and Choquet properties for the $p$-fine topology were proved,
and the latter was then used to deduce two further facts: first that every $p$-finely
open set is $p$-\emph{quasiopen}, and second that an arbitrary set
is $p$-\emph{thick} at
$p$-quasi-every of its points.
The latter fact is called the fine Kellogg property.

Few results of fine potential theory seem to have been considered in the case $p=1$, see however \cite{Tur} for some results in weighted Euclidean spaces.
The case $p=1$ is quite different since
cornerstones of the theory for $p>1$, such as comparison principles and
continuity of $p$-harmonic functions, are not available.
However, the author has previously studied some aspects of
fine potential theory for $p=1$ in metric spaces in \cite{L-Fed,L-FC,L-WC}.
The setting in these papers as well as the current one is a complete
metric space that is equipped with a doubling measure and supports a Poincar\'e inequality.
In \cite{L-WC}, the author proved a weak Cartan property in the case $p=1$,
see Theorem \ref{thm:weak Cartan property in text} below.
In this paper we prove, in the case $p=1$,
the fine Kellogg property and
the Choquet property in forms that are exactly analogous with the case $p>1$, see Corollary \ref{cor:fine Kellogg} and Theorem \ref{thm:Choquet property}.
Moreover, in Theorem \ref{thm:quasi-Lindelof} we prove the so-called
quasi-Lindel\"of principle for $1$-finely open sets,
also in a form that is exactly analogous with the case $p>1$.

In the case $1<p<\infty$, the different properties are deduced as follows,
see \cite{BBL-CCK,HKM-CFT,MZ}.
\begin{align*}
&\textrm{Cartan property}\implies\textrm{Choquet property}\\
&\qquad\qquad\implies
\begin{cases}
& \textrm{Fine Kellogg property}\implies\textrm{Quasi-Lindel\"of principle}\\
& \textrm{Finely open sets are quasiopen}
\end{cases}
\end{align*}

For $p>1$,
the fine Kellogg property is closely related to the (usual) Kellogg property,
which states that
$p$-quasi every boundary point of an open set is \emph{regular}, meaning
that $p$-harmonic solutions of the Dirichlet problem are continuous up to these boundary
points when the boundary data is continuous.
According to the Wiener criterion, every point where the complement
of an open set is
$p$-thick is a regular boundary point, which combined with the fine Kellogg property implies the (usual) Kellogg property.

In the case $p=1$,
since we lack various tools such as comparison principles but on the 
other hand have access to other more geometric tools, we deduce the various properties in a rather different order, as follows.
\begin{alignat*}{5}
&  &&  \textrm{Weak Cartan property}\\
&  &&  \qquad\qquad\Downarrow\\
&\textrm{Quasi-Lindel\"of principle} &\implies &\textrm{Finely open sets are quasiopen}\\
&\qquad\qquad \Uparrow &&  \qquad\qquad\Downarrow\\
&\textrm{Fine Kellogg property}  &\implies & \textrm{Choquet property}
\end{alignat*}

Easy examples such as that of a square show that in the case $p=1$
there can be many irregular boundary points and a Kellogg property does not hold.
We retain the term ``fine Kellogg property'' for $p=1$,
but this property is a starting point for proving the other properties rather than an end in itself, in contrast to the case $p>1$.
In terms of applications, one of our main motivations
is that the quasi-Lindel\"of principle will be useful in further research when considering
\emph{1-strict subsets} and partition of unity arguments in $1$-finely open sets.

\section{Preliminaries}\label{preliminaries}

In this section we introduce the notation, definitions,
and assumptions employed in the paper.

Throughout this paper, $(X,d,\mu)$ is a complete metric space that is equip\-ped
with a metric $d$ and a Borel regular outer measure $\mu$ satisfying
a doubling property, meaning that
there exists a constant $C_d\ge 1$ such that
\[
0<\mu(B(x,2r))\le C_d\mu(B(x,r))<\infty
\]
for every ball $B(x,r):=\{y\in X:\,d(y,x)<r\}$.
We also assume that $X$ consists of at least $2$ points.
Sometimes we abbreviate $B=B(x,r)$ and $aB:=B(x,ar)$ for $a>0$.
Note that in metric spaces, a ball (as a set) does not necessarily have a unique center and radius,
but we will always understand these to be  prescribed for the balls that we consider.

A complete metric space equipped with a doubling measure is proper,
that is, closed and bounded sets are compact.
Since $X$ is proper, for any open set $\Omega\subset X$
we define $\liploc(\Omega)$ to be the space of
functions that are Lipschitz in every open $\Omega'\Subset\Omega$.
Here $\Omega'\Subset\Omega$ means that $\overline{\Omega'}$ is a
compact subset of $\Omega$. Other local spaces of functions are defined analogously.

For any set $A\subset X$ and $0<R<\infty$, the restricted spherical Hausdorff content
of codimension one is defined to be
\[
\mathcal{H}_{R}(A):=\inf\left\{ \sum_{i=1}^{\infty}
\frac{\mu(B(x_{i},r_{i}))}{r_{i}}:\,A\subset\bigcup_{i=1}^{\infty}B(x_{i},r_{i}),\,r_{i}\le R\right\}.
\]
(We interpret $B(x,0)=\emptyset$ and $\mu(B(x,0))/0=0$, so that finite coverings
are also allowed.)
The codimension one Hausdorff measure of $A\subset X$ is then defined to be
\[
\mathcal{H}(A):=\lim_{R\rightarrow 0}\mathcal{H}_{R}(A).
\]

All functions defined on $X$ or its subsets will take values in $[-\infty,\infty]$.
By a curve we mean a nonconstant rectifiable continuous mapping from a compact
interval of the real line into $X$.
A nonnegative Borel function $g$ on $X$ is an upper gradient 
of a function $u$
on $X$ if for all nonconstant curves $\gamma$, we have
\[
|u(x)-u(y)|\le \int_\gamma g\,ds,
\]
where $x$ and $y$ are the end points of $\gamma$
and the curve integral is defined by using an arc-length parametrization,
see \cite[Section 2]{HK} where upper gradients were originally introduced.
We interpret $|u(x)-u(y)|=\infty$ whenever  
at least one of $|u(x)|$, $|u(y)|$ is infinite.
By only considering curves $\gamma$ in a set $A\subset X$,
we can talk about a function $g$ being an upper gradient of $u$ in $A$.

Given an open set $\Om\subset X$, we let
\[
\Vert u\Vert_{N^{1,1}(\Om)}:=\Vert u\Vert_{L^1(\Om)}+\inf \Vert g\Vert_{L^1(\Om)},
\]
where the infimum is taken over all $1$-weak upper gradients $g$ of $u$ in $\Om$.
The substitute for the Sobolev space $W^{1,1}$ in the metric setting is the Newton-Sobolev space
\[
N^{1,1}(\Om):=\{u:\|u\|_{N^{1,1}(\Om)}<\infty\},
\]
which was first considered in \cite{S}.
We understand every Newton-Sobolev function to be defined at every $x\in \Om$
(even though $\Vert \cdot\Vert_{N^{1,1}(\Om)}$ is then only a seminorm).

We will assume throughout the paper that $X$ supports a $(1,1)$-Poincar\'e inequality,
meaning that there exist constants $C_P>0$ and $\lambda \ge 1$ such that for every
ball $B(x,r)$, every $u\in L^1_{\loc}(X)$,
and every upper gradient $g$ of $u$,
we have
\[
\vint{B(x,r)}|u-u_{B(x,r)}|\, d\mu 
\le C_P r\vint{B(x,\lambda r)}g\,d\mu,
\]
where 
\[
u_{B(x,r)}:=\vint{B(x,r)}u\,d\mu :=\frac 1{\mu(B(x,r))}\int_{B(x,r)}u\,d\mu.
\]

The $1$-capacity of a set $A\subset X$ is given by
\[
\capa_1(A):=\inf \Vert u\Vert_{N^{1,1}(X)},
\]
where the infimum is taken over all functions $u\in N^{1,1}(X)$ such that
$u\ge 1$ on $A$.
We know that $\capa_1$ is an outer capacity, meaning that
\[
\capa_1(A)=\inf\{\capa_1(W):\,W\supset A\textrm{ is open}\}
\]
for any $A\subset X$, see e.g. \cite[Theorem 5.31]{BB}.
If a property holds outside a set
$A\subset X$ with $\capa_1(A)=0$, we say that it holds
at $1$-quasi-every point.

We say that a set $U\subset X$ is $1$-quasiopen if for every $\eps>0$ there is an
open set $G\subset X$ such that $\capa_1(G)<\eps$ and $U\cup G$ is open.

The variational $1$-capacity of a set $A\subset D$
with respect to a set $D\subset X$ is given by
\[
\rcapa_1(A,D):=\inf \int_X g_u \,d\mu,
\]
where the infimum is taken over functions $u\in N^{1,1}(X)$ such that $u\ge 1$ on $A$
and $u=0$ on $X\setminus D$, and their upper gradients $g_u$.
For basic properties satisfied by capacities, such as monotonicity and countable
subadditivity, see e.g. \cite{BB,BB-cap}.

Next we recall the definition and basic properties of functions
of bounded variation on metric spaces, following \cite{M}. See also e.g. \cite{AFP, EvaG92, Fed, Giu84, Zie89} for the classical 
theory in the Euclidean setting.
Let $\Om\subset X$ be an open set.
Given a function $u\in L^1_{\loc}(\Om)$, we define the total variation of $u$ in $\Om$ by
\[
\|Du\|(\Om):=\inf\left\{\liminf_{i\to\infty}\int_\Om g_{u_i}\,d\mu:\, u_i\in \Lip_{\loc}(\Om),\, u_i\to u\textrm{ in } L^1_{\loc}(\Om)\right\},
\]
where each $g_{u_i}$ is an upper gradient of $u_i$ in $\Om$.
(Note that in \cite{M}, local Lipschitz constants were used instead of upper
gradients, but the theory can be developed with either definition.)
If $u\in L^1(\Om)$ and $\Vert Du\Vert(\Om)<\infty$,
we say that $u$ is a function of bounded variation 
and denote $u\in\BV(\Om)$.
For an arbitrary set $A\subset X$, we define
\[
\|Du\|(A):=\inf\{\|Du\|(W):\, A\subset W,\,W\subset X
\text{ is open}\}.
\]
If $u\in L^1_{\loc}(\Om)$ and $\Vert Du\Vert(\Omega)<\infty$, $\|Du\|(\cdot)$ is
a Radon measure on $\Omega$ by \cite[Theorem 3.4]{M}.
A $\mu$-measurable set $E\subset X$ is said to be of finite perimeter if $\|D\ch_E\|(X)<\infty$, where $\ch_E$ is the characteristic function of $E$.
The perimeter of $E$ in $\Omega$ is also denoted by
\[
P(E,\Omega):=\|D\ch_E\|(\Omega).
\]

The lower and upper approximate limits of a function $u$ on $X$ are defined respectively by
\begin{equation}\label{eq:lower approximate limit}
u^{\wedge}(x):
=\sup\left\{t\in\R:\,\lim_{r\to 0}\frac{\mu(B(x,r)\cap\{u<t\})}{\mu(B(x,r))}=0\right\}
\end{equation}
and
\begin{equation}\label{eq:upper approximate limit}
u^{\vee}(x):
=\inf\left\{t\in\R:\,\lim_{r\to 0}\frac{\mu(B(x,r)\cap\{u>t\})}{\mu(B(x,r))}=0\right\}.
\end{equation}
Unlike Newton-Sobolev functions, we understand $\BV$ functions to be
$\mu$-equivalence classes. To study fine properties, we need to
consider the pointwise representatives $u^{\wedge}$ and $u^{\vee}$.
By Lebesgue's differentiation theorem (see e.g. \cite[Chapter 1]{Hei}),
for any $u\in L^1_{\loc}(X)$ we have $u=u^{\wedge}=u^{\vee}$ $\mu$-almost everywhere.

The $\BV$-capacity of a set $A\subset X$ is defined by
\begin{equation}\label{eq:BVcapacity}
\capa_{\BV}(A):=\inf \Vert u\Vert_{\BV(X)},
\end{equation}
where the infimum is taken over all $u\in\BV(X)$ with $u\ge 1$ in a neighborhood of $A$.
The $\BV$-capacity has the following useful continuity property
(not satisfied by the $1$-capacity):
by \cite[Theorem 3.4]{HaKi} we know that if
$A_1\subset A_2\subset \ldots\subset X$, then
\begin{equation}\label{eq:continuity of BVcap}
\capa_{\BV}\left(\bigcup_{i=1}^{\infty}A_i\right)=\lim_{i\to\infty} \capa_{\BV}(A_i).
\end{equation}
On the other hand, by \cite[Theorem 4.3]{HaKi} we know that for some constant
$C(C_d,C_P,\lambda)\ge 1$ and any
$A\subset X$, we have
\begin{equation}\label{eq:Newtonian and BV capacities are comparable}
\capa_{\BV}(A)\le \capa_1(A)\le C\capa_{\BV}(A);
\end{equation}
whenever we want to state that a constant $C$
depends on the parameters $a,b, \ldots$, we write $C=C(a,b,\ldots)$.

By combining this with
\cite[Theorem 5.1]{HaKi}, we get for any $A\subset X$ that
\begin{equation}\label{eq:null sets of Hausdorff measure and capacity}
\capa_{\BV}(A)=0\quad\textrm{iff}\quad\capa_1(A)=0\quad\textrm{iff}\quad\mathcal H(A)=0.
\end{equation}

Next we define the fine topology in the case $p=1$.
\begin{definition}\label{def:1 fine topology}
We say that $A\subset X$ is $1$-thin at the point $x\in X$ if
\[
\lim_{r\to 0}r\frac{\rcapa_1(A\cap B(x,r),B(x,2r))}{\mu(B(x,r))}=0.
\]
We say that a set $U\subset X$ is $1$-finely open if $X\setminus U$ is $1$-thin at every $x\in U$. Then we define the $1$-fine topology as the collection of $1$-finely open sets on $X$.

We denote the $1$-fine interior of a set $D\subset X$, i.e. the largest $1$-finely open set contained in $D$, by $\fint D$. We denote the $1$-fine closure of $D$, i.e. the smallest $1$-finely closed set containing $D$, by $\overline{D}^1$.

Finally, we define the \emph{1-base} $b_1 D$ of a set $D\subset X$ as the set of points where $D$ is 1-\emph{thick}, i.e. not $1$-thin.
\end{definition}

See \cite[Section 4]{L-FC} for a proof of the fact that the
$1$-fine topology is indeed a topology.
A rather differently formulated notion of $1$-thinness was previously given
in the weighted Euclidean
setting in \cite{Tur}.

The support of a ($\mu$-almost everywhere defined) function $u$ on $X$ is the closed set
\[
\supp u:=\{x\in X:\,\mu(\{u\neq 0\}\cap B(x,r))>0\ \textrm{for all }r>0\}.
\]
For an open set $\Om\subset X$, we denote by $\BV_c(\Om)$ the class of functions $\varphi\in\BV(\Om)$ with
compact support in $\Om$, that is, $\supp \varphi\Subset \Om$.

\begin{definition}\label{def:least gradient}
Let $\Om\subset X$ be an open set.
We say that $u\in\BV_{\loc}(\Om)$ is a $1$-minimizer  in $\Om$ if
for all $\varphi\in \BV_c(\Om)$,
\begin{equation}\label{eq:definition of 1minimizer}
\Vert Du\Vert(\supp\varphi)\le \Vert D(u+\varphi)\Vert(\supp\varphi).
\end{equation}
We say that $u\in\BV_{\loc}(\Om)$ is a $1$-superminimizer in $\Om$
if \eqref{eq:definition of 1minimizer} holds for all nonnegative $\varphi\in \BV_c(\Om)$.
\end{definition}

In the literature, $1$-minimizers are usually called functions of least gradient. 

\section{Subsets of finite Hausdorff measure}

It is a well known problem to find a subset of strictly positive but finite Hausdorff measure
from a set of infinite Hausdorff measure, see \cite{How,Lar,Rog}.
We will need such a property when proving the fine Kellogg property,
but unfortunately the existing results do not seem to directly apply
to the codimension $1$ Hausdorff measure $\mathcal H$.
The reason is that the quantity $\mu(B(x,r))/r$ is not necessarily
increasing with respect to $r$, which is usually taken as a standard assumption.
Thus in this section we study the existence of subsets of finite Hausdorff measure $\mathcal H$.
In doing this, we are nonetheless able to follow almost directly the argument
presented in \cite[Chapter 8]{PM}, which is based on \cite{How}.

For any Radon measure $\nu$ on $X$ (we understand measures to be positive)
and $R>0$, define the maximal function
\[
\mathcal M_R \nu(x):=\sup_{0<r\le R}r\frac{\nu(B(x,r))}{\mu(B(x,r))},\quad x\in X.
\]

First we need a version of Frostman's lemma, which fortunately has been proved also for
codimension Hausdorff measures ---
the following is a special case of \cite[Theorem 6.1]{Mal}.
Note that we can always understand Radon measures on open (or more generally
Borel) sets to be defined on the
whole space $X$.

\begin{theorem}\label{thm:Frostman for open sets}
Let $W\subset X$ be an open set and $R>0$. Then there exists a Radon measure $\nu$ on $W$
such that $\mathcal M_R \nu\le 1$ on $X$ and $\mathcal H_{10R}(W)\le C_F \nu(W)$
for some constant $C_F=C_F(C_d)$.
\end{theorem}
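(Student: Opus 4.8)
The statement is Frostman's lemma for the set function $\mathcal H_{10R}$, which is built from the gauge $h(B(x,r)):=\mu(B(x,r))/r$ with radii restricted to $r\le 10R$; it is a special case of \cite[Theorem 6.1]{Mal}, and the plan below indicates how it is proved directly, following \cite{How} and \cite[Chapter 8]{PM}. The one feature of $h$ absent in the classical gauge $r\mapsto r^s$ is monotonicity in $r$, but doubling supplies the substitute
\[
\tfrac12\,h(B(x,r))\ \le\ h(B(x,2r))\ \le\ \tfrac{C_d}{2}\,h(B(x,r)),
\]
so that $h$ is comparable across comparable scales with constants depending only on $C_d$; this turns out to be all the regularity the construction needs.

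First I would reduce to the case of a compact set $K\subset W$: the restricted content $\mathcal H_{10R}$ is inner regular on analytic sets (cf.\ \cite{How}), and the passage from compact sets back to the open set $W$ is a routine approximation, carried out in \cite{Mal}. For such a $K$ I would fix a system of dyadic cubes of $X$ (Christ, or Hyt\"onen--Kairema, cubes, available because $\mu$ is doubling) and run the standard mass-redistribution construction. Starting from a top generation of cubes of diameter comparable to $R$ that meet $K$, assign to each such cube $Q$ the initial mass $h(B_Q)$, where $B_Q$ is a ball comparable to $Q$, spread over the descendants of $Q$ as a multiple of $\mu$; then sweep downwards through the generations, and at any cube $Q'$ on which the accumulated mass exceeds $h(B_{Q'})$, rescale the mass carried by $Q'$ and all of its descendants so that equality holds on $Q'$. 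The limit of this procedure is a Radon measure $\nu$ with $\nu(Q)\le h(B_Q)$ for every admissible dyadic cube $Q$, supported in an arbitrarily small neighbourhood of $K$ --- hence inside $W$ once the top generation is chosen fine enough.

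It then remains to pass from cubes to balls. A ball $B(x,r)$ with $r\le R$ meets only a bounded number, depending on $C_d$, of dyadic cubes of diameter between $r$ and, say, $10r$, and for each such $Q$ doubling gives $h(B_Q)\le C(C_d)\,\mu(B(x,r))/r$; summing, $\mathcal M_R\nu\le C(C_d)$, and replacing $\nu$ by $\nu/C(C_d)$ normalises this to $\mathcal M_R\nu\le 1$. For the lower bound one estimates the total mass removed during the rescalings through the family of cubes at which a rescaling occurred: the maximal such cubes form a disjoint subfamily whose dilates still cover $K$ and whose associated balls have radii $\le 10R$, so a definite fraction of the initial mass survives, and the definition of $\mathcal H_{10R}$ yields $\nu(K)\ge c(C_d)\,\mathcal H_{10R}(K)$.

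I expect this last mass-loss estimate to be the main obstacle: one must check that the rescalings, which a priori occur at infinitely many cubes spread over all generations, together discard at most a fixed fraction of the mass, and organise the \emph{bad} cubes into a single covering of $K$ at the correct scale. This is exactly the point at which the non-monotonicity of $h$ threatens to enter, and it is absorbed by the scale-comparability displayed above; with that in hand every constant depends only on $C_d$, and one obtains $C_F=C_F(C_d)$ as claimed.
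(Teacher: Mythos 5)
The paper does not prove this theorem: it cites it directly as a special case of Mal\'y's Frostman-type lemma for codimension Hausdorff contents, \cite[Theorem 6.1]{Mal}, and moves on. You instead sketch a self-contained proof by adapting the Howroyd--Mattila mass-distribution construction to a doubling metric space equipped with Christ-type dyadic cubes. The two are closely related --- Mal\'y's own proof rests on the same circle of ideas --- and your observation that doubling gives comparability of the gauge $h(B(x,r))=\mu(B(x,r))/r$ across comparable scales, replacing the classical monotonicity of $r\mapsto r^s$, is precisely the structural point that lets the argument run in this setting. The paper's route buys brevity and avoids redoing the dyadic-cube machinery; yours buys a self-contained argument and transparency about why $C_F$ depends only on $C_d$.

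Two spots in your sketch would need more care if carried out. First, passing from compact $K$ back to the open set $W$ requires an inner-regularity statement for the content $\mathcal H_{10R}$ which is not automatic for a mere outer content; note that the paper in fact runs this reduction in the \emph{opposite} direction, deducing the compact case (Theorem \ref{thm:Frostman for compact sets}) from the open case by a weak-$*$ limit along shrinking open neighbourhoods of $K$. Second, the phrase that ``a definite fraction of the initial mass survives'' is not the right mechanism for the lower bound: the initial mass $\sum_Q h(B_Q)$ over the top-generation cubes is in general only an \emph{upper} bound for $\mathcal H_{10R}(K)$, so controlling the fraction discarded proves nothing about $\mathcal H_{10R}(K)$ from below. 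The correct argument --- which you in fact gesture at in the same sentence --- is purely a covering one: one shows that every $x\in K$ lies in a saturated cube $Q$ with $\nu(Q)=h(B_Q)$, the maximal such cubes are pairwise disjoint, their comparable balls of radius at most $10R$ cover $K$, and hence $\mathcal H_{10R}(K)\le\sum_i h(B_{Q_i})=\sum_i\nu(Q_i)\le\nu(K)$ up to a constant depending only on $C_d$. Verifying that every point of $K$ really does acquire a saturated cube in your top-down sweep (rather than the more standard bottom-up sweep) is exactly the bookkeeping that must be checked. With those clarifications the plan is sound.
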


We easily get the following version for compact sets.

\begin{theorem}\label{thm:Frostman for compact sets}
	Let $K\subset X$ be compact and let $R>0$. Then there exists a Radon measure $\nu$ on $K$
	such that $\mathcal M_R \nu\le 1$ on $X$ and $\mathcal H_{10R}(K)\le C_F \nu(K)$.
\end{theorem}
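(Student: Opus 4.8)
The plan is to deduce the compact case from Theorem~\ref{thm:Frostman for open sets} by exhausting $K$ with shrinking open neighbourhoods and passing to a weak$^*$ limit of the corresponding Frostman measures (the case $K=\emptyset$ being trivial with $\nu=0$). First I would set $W_j:=\{x\in X:\dist(x,K)<1/j\}$ for $j\in\N$; each $W_j$ is open and contains $K$, one has $W_j\subset W_1$ and $\bigcap_j W_j=K$ since $K$ is closed, and $\overline{W_1}$ is closed and bounded, hence compact because $X$ is proper. Applying Theorem~\ref{thm:Frostman for open sets} to $W_j$ and $R$ gives a Radon measure $\nu_j$ on $W_j$, which I view as a Radon measure on $X$ with $\supp\nu_j\subset\overline{W_j}\subset\overline{W_1}$, satisfying $\M_R\nu_j\le1$ on $X$ and
\[
\mathcal H_{10R}(K)\le\mathcal H_{10R}(W_j)\le C_F\,\nu_j(W_j)=C_F\,\nu_j(X),
\]
the last equality holding because $\nu_j$ is concentrated on $W_j$, with $C_F=C_F(C_d)$ the constant from Theorem~\ref{thm:Frostman for open sets}.

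Next I would observe that the masses $\nu_j(X)$ are uniformly bounded: covering the compact set $\overline{W_1}$ by finitely many balls $B(y_1,R),\dots,B(y_N,R)$ and using $R\,\nu_j(B(y_k,R))\le\mu(B(y_k,R))\,\M_R\nu_j(y_k)\le\mu(B(y_k,R))$ yields $\nu_j(X)\le R^{-1}\sum_{k=1}^N\mu(B(y_k,R))=:M<\infty$ for all $j$. Since all the $\nu_j$ are supported in the fixed compact set $\overline{W_1}$ and have mass at most $M$, weak$^*$ compactness of bounded subsets of $C(\overline{W_1})^*$ lets me pass to a subsequence with $\nu_{j_i}\rightharpoonup\nu$ weak$^*$, i.e.\ $\int f\,d\nu_{j_i}\to\int f\,d\nu$ for every $f\in C_c(X)$. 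This limit is supported in $K$: if $x\notin K$ then $B(x,r_0)\cap W_j=\emptyset$ for some $r_0>0$ and all large $j$, so $\nu_{j_i}(B(x,r_0))=0$ for large $i$, and testing against a continuous function supported in $B(x,r_0)$ and equal to $1$ on $B(x,r_0/2)$ gives $\nu(B(x,r_0/2))=0$. Hence $\supp\nu\subset K$, and in particular $\nu(K)=\nu(X)$.

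It then remains to verify the two estimates for $\nu$. For the maximal function bound, fix $x\in X$ and $0<r\le R$; lower semicontinuity of mass on the open ball $B(x,r)$ under weak$^*$ convergence gives
\[
\nu(B(x,r))\le\liminf_{i\to\infty}\nu_{j_i}(B(x,r))\le\frac{\mu(B(x,r))}{r},
\]
so $\M_R\nu\le1$ on $X$. For the lower bound, pick $f\in C_c(X)$ with $0\le f\le1$ and $f\equiv1$ on $\overline{W_1}$; since all measures involved are concentrated on $\overline{W_1}$,
\[
\nu(K)=\nu(X)=\lim_{i\to\infty}\nu_{j_i}(X)=\lim_{i\to\infty}\nu_{j_i}(W_{j_i})\ge\frac1{C_F}\,\mathcal H_{10R}(K),
\]
where the last inequality uses $\nu_{j_i}(W_{j_i})\ge C_F^{-1}\mathcal H_{10R}(W_{j_i})\ge C_F^{-1}\mathcal H_{10R}(K)$, by monotonicity of $\mathcal H_{10R}$ and $K\subset W_{j_i}$. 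Thus $\mathcal H_{10R}(K)\le C_F\,\nu(K)$ with $C_F=C_F(C_d)$, as desired.

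The step demanding the most care is the passage to the weak$^*$ limit: the inequality $\M_R\nu\le1$ only uses the easy, lower-semicontinuity half of the portmanteau theorem, whereas the lower bound $\nu(K)\ge C_F^{-1}\mathcal H_{10R}(K)$ requires genuine convergence of the total masses $\nu_{j_i}(X)\to\nu(X)$. This is precisely why it matters to have confined all the $\nu_j$ to a single compact set $\overline{W_1}$, so that the (non-compactly-supported) constant function $1$ may legitimately be replaced by a compactly supported test function; the remaining verifications are routine.
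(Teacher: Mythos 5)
Your proof is correct and follows essentially the same route as the paper: exhaust $K$ by the open neighbourhoods $W_j=\{\dist(\cdot,K)<1/j\}$, apply Theorem~\ref{thm:Frostman for open sets} to each, extract a weak$^*$-convergent subsequence, and verify the three properties via (i) lower semicontinuity of mass on open sets for $\supp\nu\subset K$ and the maximal function bound, and (ii) upper semicontinuity/convergence of mass on the compact set $\overline{W_1}$ for the lower bound $\mathcal H_{10R}(K)\le C_F\nu(K)$. The only cosmetic differences are that you make the uniform mass bound fully explicit (which the paper leaves as ``easily follows'') and phrase the lower-bound step via a fixed test function $f\equiv 1$ on $\overline{W_1}$ rather than citing upper semicontinuity of weak$^*$ limits on compact sets directly; these are equivalent.
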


\begin{proof}
For any $a>0$, let $K^a:=\{x\in X:\,\dist(x,K)<a\}$.
For each open set $W_i:=K^{1/i}$, $i\in\N$, apply Theorem \ref{thm:Frostman for open sets}
to obtain a Radon measure $\nu_i$ on $W_i$ such that $\mathcal M_R \nu_i\le 1$ on
$X$ and $\mathcal H_{10R}(W_i)\le C_F \nu_i(W_i)$.
From the condition $\mathcal M_R \nu_i\le 1$ and the compactness of $K$ it easily follows that
$\nu_i(X)$ is a bounded sequence, and so
there exists a subsequence (not relabeled)
and a Radon measure $\nu$ on $X$ such that $\nu_i\overset{*}{\rightharpoonup}\nu$ on $X$
(see e.g. \cite[Theorem 1.59]{AFP}).
By lower semicontinuity in open sets under weak* convergence
(see e.g. \cite[Proposition 1.62]{AFP}), for every $j\in\N$ we have
\[
\nu(X\setminus \overline{W_j})\le \liminf_{i\to\infty}
\nu_i(X\setminus \overline{W_j})=0,
\]
and it follows that $\nu(X\setminus K)=0$, that is, $\nu$ is a Radon measure
on $K$. Moreover, for any $x\in X$ and $0<r\le R$,
\[
r\frac{\nu(B(x,r))}{\mu(B(x,r))}
\le \liminf_{i\to\infty}r\frac{\nu_i(B(x,r))}{\mu(B(x,r))}\le 1,
\]
and so $\mathcal M_R \nu\le 1$ on $X$.
Finally, by upper semicontinuity in compact sets,
\begin{align*}
\nu(K)=\nu(\overline{W_1})\ge\limsup_{i\to\infty}\nu_i(\overline{W_1})
&=\limsup_{i\to\infty}\nu_i(W_i)\\
&\ge \limsup_{i\to\infty}\frac{\mathcal H_{10R}(W_i)}{C_F}
\ge\frac{\mathcal H_{10R}(K)}{C_F}.
\end{align*}
\end{proof}

Now we prove the existence of subsets of positive finite $\mathcal H$-measure.
We make no attempt to give the most general possible result (one that might cover e.g. Hausdorff measures
of a different codimension), but rather just prove a version that
will suffice for our purposes.

\begin{theorem}\label{thm:existence of subset}
Let $K\subset X$ be a compact set with $\mathcal H(\{x\})=0$ for all $x\in K$. Then
\[
\mathcal H(K)=\sup \{\mathcal H(K_0):\, K_0\subset K\textrm{ is compact with }\mathcal H(K_0)<\infty\}.
\]
\end{theorem}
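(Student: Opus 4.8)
The plan is to follow the classical Howroyd-type argument (as presented in \cite[Chapter 8]{PM}) using the Frostman lemma for compact sets, Theorem \ref{thm:Frostman for compact sets}, as the main tool. The inequality $\mathcal H(K)\ge\sup\{\mathcal H(K_0):\,K_0\subset K\text{ compact},\ \mathcal H(K_0)<\infty\}$ is immediate from monotonicity of $\mathcal H$, so the content is the reverse inequality. If $\mathcal H(K)<\infty$ we may take $K_0=K$ and are done, so assume $\mathcal H(K)=\infty$; then it suffices to produce, for each $M>0$, a compact subset $K_0\subset K$ with $M\le\mathcal H(K_0)<\infty$.

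First I would fix $R>0$ and apply Theorem \ref{thm:Frostman for compact sets} to get a Radon measure $\nu$ on $K$ with $\mathcal M_R\nu\le 1$ on $X$ and $\mathcal H_{10R}(K)\le C_F\nu(K)$; since $\mathcal H(K)=\infty$ we can choose $R$ small enough that $\nu(K)\ge \mathcal H_{10R}(K)/C_F$ is as large as we like, say $\nu(K)\ge 2M$ — more precisely we want $\nu(K)$ large relative to the eventual Hausdorff content bound. The key structural fact is that the condition $\mathcal M_R\nu\le 1$ gives the \emph{upper density bound} $\nu(B(x,r))\le \mu(B(x,r))/r$ for all $x$ and all $0<r\le R$, which is exactly what is needed to compare $\nu$ with $\mathcal H_{R}$: for any cover of a set $A$ by balls $B(x_i,r_i)$ with $r_i\le R$ we get $\nu(A)\le\sum_i\nu(B(x_i,r_i))\le\sum_i\mu(B(x_i,r_i))/r_i$, hence $\nu(A)\le\mathcal H_R(A)\le\mathcal H(A)$ for every $A\subset X$. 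In particular $\nu(K_0)\le\mathcal H(K_0)$ for every subset $K_0$, so any compact $K_0$ with $\nu(K_0)\ge M$ automatically has $\mathcal H(K_0)\ge M$.

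It remains to find a compact $K_0\subset K$ with $\nu(K_0)\ge M$ and $\mathcal H(K_0)<\infty$. The idea is to pass to a suitable subset on which $\nu$ has a \emph{lower} density bound as well, which then forces finite Hausdorff measure. Concretely, using the assumption $\mathcal H(\{x\})=0$ (equivalently $\nu(\{x\})=0$, since $\nu\le\mathcal H$ pointwise) one shows that $\nu$ is non-atomic, so by an exhaustion/outer-regularity argument one can trim $K$ down to a compact subset $K_0$ with $\nu(K_0)\ge M$ on which, after possibly shrinking the scale, $\nu(B(x,r))\ge c\,\mu(B(x,r))/r$ holds for $x\in K_0$ and $r$ below some threshold; a standard covering (Vitali/$5r$-covering using the doubling property) of $K_0$ by such balls then yields $\mathcal H_{R'}(K_0)\le C\nu(K_0)/c<\infty$ for all small $R'$, hence $\mathcal H(K_0)<\infty$. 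Letting $M\to\infty$ through such subsets gives $\sup_{K_0}\mathcal H(K_0)=\infty=\mathcal H(K)$.

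The main obstacle is the step extracting the lower density bound on a large-measure compact subset. In the classical setting one leans on monotonicity of $r\mapsto\mu(B(x,r))/r$ (the "standard assumption" the paper explicitly wants to avoid), so here the argument must instead exploit only the doubling property together with the upper bound $\mathcal M_R\nu\le 1$ and non-atomicity; getting the covering estimates to close without that monotonicity — in particular controlling how $\mu(B(x,r))/r$ can oscillate in $r$ when overlapping balls from a Vitali cover are inflated by the doubling constant — is the delicate point, and is presumably exactly where the argument of \cite[Chapter 8]{PM} has to be adapted rather than quoted verbatim. Everything else (monotonicity of $\mathcal H$, the reduction to the case $\mathcal H(K)=\infty$, the passage $\nu\le\mathcal H$) is routine.
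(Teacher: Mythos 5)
Your proposal gets the high--level skeleton right (reduce to $\mathcal H(K)=\infty$, invoke the Frostman lemma, relate $\nu$ to $\mathcal H$ via the upper density bound, then seek a subset on which a \emph{lower} density bound also holds), and your observation that $\mathcal M_R\nu\le1$ gives $\nu(A)\le\mathcal H_R(A)$ for every $A$ is a clean way to see $\nu\le\mathcal H$ — somewhat slicker than what the paper does for the corresponding estimate. Your derivation of non-atomicity from $\nu(\{x\})\le\mathcal H(\{x\})=0$ is likewise correct for the Frostman measure.

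However, there is a genuine gap, and it is precisely the step you flag as "the main obstacle." You write that "one shows that $\nu$ is non-atomic, so by an exhaustion/outer-regularity argument one can trim $K$ down to a compact subset $K_0$ \dots on which \dots $\nu(B(x,r))\ge c\,\mu(B(x,r))/r$ holds." This does not work: non-atomicity plus trimming gives no lower density bound whatsoever. A Frostman measure is only constrained from above, and an arbitrary one may have $\nu(B(x,r))/(\mu(B(x,r))/r)$ tending to $0$ at $\nu$-a.e.\ point of $K$, in which case no subset of positive $\nu$-measure has the lower bound you want. The lower density bound has to be \emph{manufactured}, and this is exactly the content of Howroyd's argument that the paper carries out. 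Concretely, the paper does the following: it fixes a countable family $\mathcal B$ of balls (dyadic radii, finitely many per scale, covering $K$), considers the class $\mathcal F_\delta$ of Radon measures on $K$ satisfying the Frostman-type bound $\rad(B)\,\nu(B)\le\mu(B)$ only for $B\in\mathcal B$ with $\rad(B)\le\delta$, shows via weak* compactness that the supremum $h=\sup\{\nu(K):\nu\in\mathcal F_\delta\}$ is attained and that the maximizer set $\mathcal G_\delta$ is a nonempty convex weak*-compact set, and then applies the Krein--Milman theorem to extract an \emph{extreme point} $\nu\in\mathcal G_\delta$. The lower density bound is then proved for this extreme point: if $\nu(K\setminus D_\eps)>0$ (where $D_\eps$ is the union of small $\mathcal B$-balls with $2\nu(B)>\mu(B)/\rad(B)$), one first carves out of $K\setminus D_\eps$ a Borel set $A$ of positive $\nu$-measure that is compatible with the finitely many large balls of $\mathcal B$, uses non-atomicity to split $A=H_1\cup H_2$ into halves of equal $\nu$-measure, builds $\nu_1,\nu_2$ by doubling $\nu$ on $H_1$, resp.\ $H_2$, and then derives a contradiction with extremality: $\nu=(\nu_1+\nu_2)/2$ with $\nu_1\ne\nu\ne\nu_2$, yet each $\nu_j$ has total mass $h$, so one of them must fail the $\mathcal F_\delta$-constraint on some ball $B$, which forces $B\subset D_\eps$ and hence $\nu_j(B)=\nu(B)$, a contradiction. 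Without this maximality-plus-extremality mechanism there is no route to the lower density bound, so the part of your proposal that is sketched rather than proved is not a detail that "presumably has to be adapted" — it is the entire substance of the theorem. Note also that because the paper's $\nu$ only satisfies the upper density bound on balls in $\mathcal B$, your direct comparison $\nu\le\mathcal H$ does not apply to it verbatim; the paper instead fattens arbitrary covering balls to $\mathcal B$-balls, losing a factor $C_d^4$, when establishing $\mathcal H(K\cap D)>0$.
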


\begin{proof}
If $\mathcal H(K)<\infty$, the result is obvious.
Thus we can assume that $\mathcal H(K)=\infty$.
By the compactness of $K$, for each $k\in \N$ we find a finite family
of balls $\{B_{k,j}:=B(x_{k,j},2^{-k})\}_{j=1}^{m_k}$, $m_k\in\N$, such
that the balls $\tfrac 12 B_{k,j}=B(x_{k,j},2^{-k-1})$ cover $K$.
Let
\[
\mathcal B:=\{B_{k,j}:\, k\in\N,\,j\in\{1,\ldots,m_k\}\}.
\]
Fix $0<M<\infty$. Since $\mathcal H(K)=\infty$, we find and fix $0<\delta<1/2$ such that
$\mathcal H_{10\delta}(K)>M$. By Theorem \ref{thm:Frostman for compact sets}
we find a Radon measure $\nu_0$ on $K$ such that
$\mathcal M_{\delta}\nu_0\le 1$ on $X$ and $\mathcal H_{10 \delta}(K)\le C_F\nu_0(K)$.
For any ball $B=B(x,r)$, denote $\rad(B):=r$.
Let $\mathcal F_{\delta}$ be the set of all Radon measures
$\nu$ on $K$ for which
\[
\rad(B)\frac{\nu(B)}{\mu(B)}\le 1\quad\textrm{for all }B\in \mathcal B\textrm{ with}\,\rad(B)\le\delta.
\]
Let
\[
h:=\sup\{\nu(K):\, \nu\in \mathcal F_{\delta}\}
\]
and
\[
\mathcal G_{\delta}:=\{\nu\in \mathcal F_{\delta}:\, \nu(K)=h\}.
\]
Then $h> M/C_F$, since $\nu_0\in \mathcal F_{\delta}$.
Pick a sequence $(\nu_i)\subset \mathcal F_{\delta}$ such that
$\nu_i(K)\to h$. As $\nu_i(X)$ is clearly bounded
(by the definition of $\mathcal F_{\delta}$ and the compactness of $K$),
we find a subsequence (not relabeled) such
that $\nu_i\overset{*}{\rightharpoonup}\widetilde{\nu}$ for some Radon measure
$\widetilde{\nu}$ on $X$.
By lower semicontinuity in open sets under weak* convergence,
\[
\widetilde{\nu}(X\setminus K)\le \liminf_{i\to\infty}\nu_i(X\setminus K)=0,
\]
and for any $B\in\mathcal B$ with $\rad(B)\le\delta$,
\[
\rad(B)\frac{\widetilde{\nu}(B)}{\mu(B)}
\le \liminf_{i\to\infty}\rad(B)\frac{\nu_i(B)}{\mu(B)}\le 1,
\]
and so
$\widetilde{\nu}\in \mathcal F_{\delta}$. By upper semicontinuity
in compact sets,
\[
\widetilde{\nu}(K)\ge \limsup_{i\to\infty}\nu_i(K)=h,
\]
and so necessarily $\widetilde{\nu}(K)=h$.
Thus $\mathcal G_{\delta}$ is nonempty, it is easily seen to be convex,
and by using the properties of weak* convergence as above it can be verified that
$\mathcal G_{\delta}$ is compact with respect to the weak* topology.
Then by the Krein-Milman theorem,
see e.g. \cite[Theorem 3.23]{Rud}, there exists an extreme point
$\nu\in \mathcal G_{\delta}$;
this means that if $\nu=t\nu_1+(1-t)\nu_2$
for some $0<t<1$ and
$\nu_1,\nu_2\in \mathcal G_{\delta}$, then $\nu=\nu_1=\nu_2$.

Fix $0<\eps<\delta$. Define
\[
D_{\eps}:=\bigcup\left\{B\in \mathcal B:\,\rad(B)\le
\eps\textrm{ and } 2 \nu(B)>\mu(B)/\rad(B)  \right\}.
\]
We claim that $\nu(K\setminus D_{\eps})=0$.

Suppose that $\nu(K\setminus D_{\eps})>0$ instead. Let $B_1,\ldots,B_m$ be all the balls
in $\mathcal B$ with radius is strictly greater than $\eps$.
Define inductively for $i=1,\ldots,m$,
\begin{align*}
& A_1:=K\setminus D_{\eps},\\
& A_{i+1}:=A_i\setminus B_i\quad \textrm{if }\nu(A_i\setminus B_i)\ge \nu(A_i\cap B_i),\\
& A_{i+1}:=A_i\cap B_i\quad \textrm{if }\nu(A_i\setminus B_i)< \nu(A_i\cap B_i).
\end{align*}
Let $A:=A_{m+1}$. Then $A\subset K$ is a Borel set and $\nu(A)>0$. Moreover,
either $A\subset B$ or $A\cap B=\emptyset$ for every $B\in \mathcal B$ with $\rad (B)>\eps$.
Recall that $\mathcal H(\{x\})=0$ for all $x\in K$; then by the doubling property
of $\mu$, necessarily
\begin{equation}\label{eq:Hausdorff measure zero consequence}
\liminf_{r\to 0}\frac{\mu(B(x,r))}{r}=0.
\end{equation}
Now if $x\in K$, for all balls $B\in\mathcal B$,
$B\ni x$ with $\rad(B)\le \delta$ we have
\[
\nu(\{x\})\le \nu(B)\le \frac{\mu(B)}{\rad(B)},
\]
where the last quantity can be made arbitrarily small by
\eqref{eq:Hausdorff measure zero consequence}.
We conclude that $\nu(\{x\})=0$ for all $x\in K$.
Thus we can find disjoint Borel sets $H_1$ and $H_2$ such that
\[
A=H_1\cup H_2\quad\textrm{and}\quad\nu(H_1)=\nu(H_2)=\tfrac 12 \nu(A),
\]
see e.g. \cite[Lemma 8.20]{PM}.
Now we define Radon measures $\nu_1$ and $\nu_2$ by
\begin{align*}
& \nu_1(E):= 2\nu(E\cap H_1)+\nu(E\setminus A),\\
& \nu_2(E):= 2\nu(E\cap H_2)+\nu(E\setminus A)
\end{align*}
for $E\subset K$. Then $\nu=(\nu_1+\nu_2)/2$,
$\nu_1(H_1)=2 \nu(H_1)>\nu(H_1)$, and $\nu_2(H_2)=2 \nu(H_2)>\nu(H_2)$.
Thus by the extremality of $\nu$, $\nu_1$ and $\nu_2$ cannot both belong to $\mathcal G_{\delta}$.
Suppose $\nu_1\notin \mathcal G_{\delta}$, the other case being treated analogously.
Clearly $\nu_1(K)=\nu_2(K)=\nu(K)=h$.
Thus $\nu_1\notin \mathcal F_{\delta}$, that is, there exists $B\in \mathcal B$ with $\rad(B)\le \delta$ such that
$\nu_1(B)>\mu(B)/\rad(B)$.
Then $\rad(B)\le \eps$ since otherwise either $A\subset B$ or $A\cap B=\emptyset$,
and in both cases $\nu_1(B)=\nu(B)\le \mu(B)/\rad(B)$. We have
\[
2 \nu(B)
\ge 2 \nu(B\cap H_1)+\nu(B\setminus A)
=\nu_1(B)
>\mu(B)/\rad(B),
\]
whence $B\subset D_{\eps}\subset X\setminus A$. Thus in total,
\[
\frac{\mu(B)}{\rad(B)}<\nu_1(B)=\nu(B)\le \frac{\mu(B)}{\rad(B)},
\]
a contradiction. Thus $\nu(K\setminus D_{\eps})=0$.

Let $p\in \N$ such that $1/p<\delta$ and
\[
D:=\bigcap_{i=p}^{\infty} D_{1/i},
\]
so that $\nu(K\setminus D)=0$ by the claim.
Fix a new $\eps>0$. For every $x\in D$ there exists $B\in \mathcal B$ such that $x\in B$,
$\rad(B)\le \eps/5$, and
$2\nu(B)> \mu(B)/\rad(B)$.
By the $5$-covering theorem, we can extract a countable collection $\{B_i\}_{i=1}^{\infty}$
of pairwise disjoint balls such that the balls $5B_i$ cover $D$.
Then
\begin{align*}
\mathcal H_{\eps}(D) \le \sum_{i=1}^{\infty} \frac{\mu(5B_i)}{5\rad (B_i)}
\le C_d^3 \sum_{i=1}^{\infty} \frac{\mu(B_i)}{\rad (B_i)}
\le 2 C_d^3 \sum_{i=1}^{\infty} \nu(B_i)
\le 2 C_d^3 \nu(K),
\end{align*}
so that $\mathcal H(D)\le 2 C_d^3 \nu(K)<\infty$.
Conversely, for any balls $\{\widetilde{B}_i\}_{i=1}^{\infty}$ with
$\rad(\widetilde{B}_i)\le\delta/8$
covering $K\cap D$, such that $\widetilde{B}_i\cap K\neq\emptyset$
for all $i\in\N$, we find balls
$\{B_i\in\mathcal B\}_{i=1}^{\infty}$ for which $B_i\supset \widetilde{B}_i$
and $4\rad(\widetilde{B}_i)\le\rad(B_i)\le 8\rad(\widetilde{B}_i)$
for all $i\in\N$, which gives
\begin{align*}
C_d^4\sum_{i=1}^{\infty}\frac{\mu(\widetilde{B}_i)}{\rad(\widetilde{B}_i)}
&\ge \sum_{i=1}^{\infty}\frac{\mu(B_i)}{\rad(B_i)}\\
&\ge \sum_{i=1}^{\infty}\nu(B_i)
\ge \nu(K\cap D)= \nu(K)=h> M/C_F.
\end{align*}
Thus $M/(C_d^4 C_F)<\mathcal H(K\cap D)<\infty$. Then since
$\mathcal H|_{K\cap D}$ is a Radon measure,
we find a compact $K_0\subset K\cap D$ such that
$\mathcal H(K_0)> M/(C_d^4 C_F)$ (see e.g. \cite[Proposition 1.43(i)]{AFP}).
This completes the proof since $0<M<\infty$ was arbitrary.
\end{proof}

\section{The fine Kellogg property}

In this section we prove the fine Kellogg property for $p=1$; it states that
an arbitrary set is $1$-thick at $1$-quasi-every point in the set.
The proof will mostly consist of considerations of measurability, as well as
exploiting the existence of subsets of finite Hausdorff measure.

Recall the definition of the $\BV$-capacity from \eqref{eq:BVcapacity}.

\begin{lemma}\label{lem:Borel measurability}
Let $D\subset X$. Then the functions
\[
x\mapsto r\frac{\capa_{\BV}(D\cap B(x,r))}{\mu(B(x,r))}\quad\textrm{for a fixed }r>0,
\]
as well as
\[
x\mapsto \limsup_{r\to 0}r\frac{\capa_{\BV}(D\cap B(x,r))}{\mu(B(x,r))}\quad\textrm{and}\quad
x\mapsto \liminf_{r\to 0}\frac{\mu(B(x,r))}{r}
\]
are Borel measurable.
\end{lemma}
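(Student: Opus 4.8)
The plan is to establish two semicontinuity statements — one in the center $x$ with the radius fixed, one in the radius $r$ with the center fixed — and then read off all three assertions by expressing the relevant $\limsup$ and $\liminf$ as countable lattice operations on functions already known to be Borel. First, fix $r>0$. If $x_m\to x$ and $y\in B(x,r)$, then $d(y,x_m)<r$ for all large $m$, so $\chi_{B(x,r)}\le\liminf_{m\to\infty}\chi_{B(x_m,r)}$ pointwise, and Fatou's lemma gives $\mu(B(x,r))\le\liminf_{m\to\infty}\mu(B(x_m,r))$; thus $x\mapsto\mu(B(x,r))$ is lower semicontinuous, and likewise $x\mapsto\mu(B(x,r))/r$. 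For the capacity I would argue similarly but invoke the continuity property \eqref{eq:continuity of BVcap}: given $x_m\to x$, the sets $E_n:=\bigcap_{m\ge n}(D\cap B(x_m,r))$ increase in $n$ with $\bigcup_n E_n\supset D\cap B(x,r)$, so by monotonicity and \eqref{eq:continuity of BVcap},
\[
\capa_{\BV}(D\cap B(x,r))\le\lim_{n\to\infty}\capa_{\BV}(E_n)\le\liminf_{m\to\infty}\capa_{\BV}(D\cap B(x_m,r)),
\]
i.e. $x\mapsto\capa_{\BV}(D\cap B(x,r))$ is lower semicontinuous. Since $\mu(B(x,r))>0$ by doubling, $1/\mu(B(\cdot,r))$ is Borel, hence so is $x\mapsto r\,\capa_{\BV}(D\cap B(x,r))/\mu(B(x,r))$ as a product of Borel functions; this proves the first assertion.

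Next, fix $x\in X$. If $r_j\uparrow r$ then $B(x,r)=\bigcup_j B(x,r_j)$, so $r\mapsto\mu(B(x,r))$ is left-continuous on $(0,\infty)$ by monotone convergence, and $r\mapsto\capa_{\BV}(D\cap B(x,r))$ is left-continuous by \eqref{eq:continuity of BVcap}. Consequently the functions $f(x,r):=r\,\capa_{\BV}(D\cap B(x,r))/\mu(B(x,r))$ and $g(x,r):=\mu(B(x,r))/r$ are left-continuous in $r$ for each fixed $x$.

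Finally, write $\limsup_{r\to0}f(x,r)=\inf_{k\in\N}\sup_{0<r<1/k}f(x,r)$. Left-continuity in $r$ lets me replace each inner supremum by a supremum over rationals, $\sup_{0<r<1/k}f(x,r)=\sup_{r\in\Q\cap(0,1/k)}f(x,r)$, which is Borel in $x$ as a countable supremum of the functions $f(\cdot,r)$ (Borel by the first step); the countable infimum over $k$ is then Borel, giving the second assertion. Interchanging $\sup$ and $\inf$ and applying the same reasoning to $g$ — whose sections $g(\cdot,r)$ are lower semicontinuous, hence Borel — shows that $x\mapsto\liminf_{r\to0}\mu(B(x,r))/r=\sup_{k\in\N}\inf_{r\in\Q\cap(0,1/k)}g(x,r)$ is Borel, giving the third. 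I expect the one genuinely delicate point to be the lower semicontinuity in $x$ of $x\mapsto\capa_{\BV}(D\cap B(x,r))$ for an arbitrary, possibly non-measurable, set $D$: this rests entirely on the continuity of $\capa_{\BV}$ along increasing sequences in \eqref{eq:continuity of BVcap}, a property the $1$-capacity does not share, which is precisely why the $\BV$-capacity is used throughout.
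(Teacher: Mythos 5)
Your proof is correct and takes essentially the same route as the paper's: lower semicontinuity in $x$ via \eqref{eq:continuity of BVcap}, left-continuity in $r$, and countable lattice operations over rational radii. The only cosmetic difference is in establishing lower semicontinuity of $x\mapsto\capa_{\BV}(D\cap B(x,r))$: you exhaust $D\cap B(x,r)$ by the increasing sets $\bigcap_{m\ge n}(D\cap B(x_m,r))$ along a sequence $x_m\to x$, whereas the paper shrinks the radius slightly to get $B(x,s)\subset B(y,r)$ for $y$ near $x$; both arguments hinge on \eqref{eq:continuity of BVcap} and are equivalent in substance.
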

\begin{proof}
Fix $r>0$.
First we show that the function
\[
x\mapsto \capa_{\BV}(D\cap B(x,r))
\]
is lower semicontinuous.
By using e.g. Lipschitz cutoff functions, we see that the function is finite for all $x\in X$.
Fix $\eps>0$ and $x\in X$.
By the continuity of $\capa_{\BV}$ under increasing sequences of sets,
see \eqref{eq:continuity of BVcap}, we have for some $s<r$
\[
\capa_{\BV}(D\cap B(x,r))\le \capa_{\BV}(D\cap B(x,s))+\eps
\le \liminf_{y\to x}\capa_{\BV}(D\cap B(y,r))+\eps.
\]
Since $\eps>0$ was arbitrary, we have shown lower semicontinuity. Similarly, it can be shown that
\[
x\mapsto \mu(B(x,r))
\]
is lower semicontinuous, and so we have shown that for a fixed $r>0$, the function
\[
x\mapsto r\frac{\capa_{\BV}(D\cap B(x,r))}{\mu(B(x,r))}
\]
is Borel measurable. Moreover, again using the continuity of
$\capa_{\BV}$ under increasing sequences of sets
we see that for any $x\in X$ and $r>0$,
\[
\sup_{0<s< r}s\frac{\capa_{\BV}(D\cap B(x,s))}{\mu(B(x,s))}
=\sup_{\substack{0<s< r\\ s\in\Q}}s\frac{\capa_{\BV}(D\cap B(x,s))}{\mu(B(x,s))},
\]
and so
\[
x\mapsto \limsup_{r\to 0}r\frac{\capa_{\BV}(D\cap B(x,r))}{\mu(B(x,r))}
=\lim_{i\to\infty}\sup_{\substack{0<s< 1/i\\ s\in\Q}}s\frac{\capa_{\BV}(D\cap B(x,s))}{\mu(B(x,s))}
\]
is Borel measurable. The Borel measurability of the third function is shown similarly.
\end{proof}

\begin{lemma}\label{lem:positive Hausdorff measure points Borel}
The set $A:=\{x\in X:\,\mathcal H(\{x\})>0\}$ is Borel.
\end{lemma}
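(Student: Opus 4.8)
The plan is to reduce the statement to the Borel measurability of the function $x\mapsto \liminf_{r\to 0}\mu(B(x,r))/r$, which is already supplied by Lemma \ref{lem:Borel measurability}. Concretely, I will show
\[
A=\Bigl\{x\in X:\ \liminf_{r\to 0}\frac{\mu(B(x,r))}{r}>0\Bigr\},
\]
so that $A$ is the preimage of $(0,\infty]$ under a Borel measurable function and hence Borel.

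First I would unwind the definition of $\mathcal H_R(\{x\})$ for a single point. Since every term $\mu(B(y_i,r_i))/r_i$ in a covering sum is nonnegative, the cheapest admissible covering of the singleton $\{x\}$ is a single ball containing $x$, so
\[
\mathcal H_R(\{x\})=\inf\Bigl\{\frac{\mu(B(y,r))}{r}:\ x\in B(y,r),\ 0<r\le R\Bigr\}.
\]
Taking $y=x$ gives $\mathcal H_R(\{x\})\le \inf_{0<r\le R}\mu(B(x,r))/r$, and letting $R\to 0$ yields $\mathcal H(\{x\})\le \liminf_{r\to 0}\mu(B(x,r))/r$. For the reverse estimate, note that if $d(x,y)<r$ then $B(x,r)\subset B(y,2r)$, so by the doubling property $\mu(B(x,r))\le \mu(B(y,2r))\le C_d\,\mu(B(y,r))$; hence $\mu(B(y,r))/r\ge \mu(B(x,r))/(C_d r)\ge \tfrac1{C_d}\inf_{0<\rho\le R}\mu(B(x,\rho))/\rho$ whenever $r\le R$. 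Passing to the infimum over coverings and then letting $R\to 0$ gives $\mathcal H(\{x\})\ge \tfrac1{C_d}\liminf_{r\to 0}\mu(B(x,r))/r$.

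Combining the two bounds, $\mathcal H(\{x\})>0$ holds if and only if $\liminf_{r\to 0}\mu(B(x,r))/r>0$; this is essentially the doubling-property observation already recorded near \eqref{eq:Hausdorff measure zero consequence}. Consequently $A$ equals the set displayed above, and since $x\mapsto \liminf_{r\to 0}\mu(B(x,r))/r$ is Borel measurable by Lemma \ref{lem:Borel measurability}, the set $A$ is Borel. I do not expect any real obstacle here: the only point requiring mild care is the passage from coverings of $\{x\}$ by arbitrary balls to balls centered at $x$, which the doubling property handles at the cost of the constant $C_d$.
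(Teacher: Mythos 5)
Your proposal is correct and takes essentially the same route as the paper: the paper's proof simply asserts that $\mathcal H(\{x\})>0$ if and only if $\liminf_{r\to 0}\mu(B(x,r))/r>0$ (calling it ``easy to check'') and then invokes Lemma \ref{lem:Borel measurability}. You have merely spelled out the verification of that equivalence, including the doubling-constant comparison needed to pass from arbitrary covering balls to balls centered at $x$; this is the intended argument.
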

\begin{proof}
It is easy to check that $\mathcal H(\{x\})>0$ if and only if
\[
\liminf_{r\to 0}\frac{\mu(B(x,r))}{r}>0.
\]
Thus the result follows from Lemma \ref{lem:Borel measurability}.
\end{proof}

The $\BV$-capacity has the following ``Borel regularity''.

\begin{lemma}\label{lem:BV capacity Borel regularity}
	Let $A\subset X$. Then there exists a Borel set $D\supset A$ such that
	$\capa_{\BV}(D\cap B)=\capa_{\BV}(A\cap B)$ for every ball $B=B(x,r)$.
\end{lemma}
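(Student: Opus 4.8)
The plan is to construct the Borel hull $D$ by an exhaustion argument, handling all balls simultaneously by exploiting the fact that the $\BV$-capacity is continuous under increasing unions of sets, and that $\capa_{\BV}(\cdot\cap B)$ is determined by countably many balls. First I would fix a countable dense set $\{x_k\}_{k\in\N}$ in $X$ (recall $X$ is separable, being proper) and consider the countable family $\mathcal B_0$ of balls $B(x_k,q)$ with $q\in\Q$, $q>0$. The key reduction is that for \emph{any} ball $B=B(x,r)$ and any set $S$, one has $\capa_{\BV}(S\cap B)=\sup\{\capa_{\BV}(S\cap B'):\,B'\in\mathcal B_0,\ B'\subset B\}$; indeed, writing $B$ as an increasing union of balls from $\mathcal B_0$ (using density of the centers and rationality of the radii, together with $\bigcup_{q<r,\,d(x_k,x)<r-q}B(x_k,q)=B(x,r)$) and applying \eqref{eq:continuity of BVcap}, the supremum over the countable subfamily already attains $\capa_{\BV}(S\cap B)$. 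Hence it suffices to find a Borel $D\supset A$ with $\capa_{\BV}(D\cap B')=\capa_{\BV}(A\cap B')$ for every $B'\in\mathcal B_0$.

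Next, for each individual ball $B'\in\mathcal B_0$, I would use the outer regularity properties of $\capa_{\BV}$: by \eqref{eq:Newtonian and BV capacities are comparable} and the outer regularity of $\capa_1$, together with the equivalence of null sets in \eqref{eq:null sets of Hausdorff measure and capacity}, one can choose for each $j\in\N$ an open set $W_{B',j}\supset A\cap B'$ with $\capa_{\BV}(W_{B',j})\le\capa_{\BV}(A\cap B')+1/j$; more carefully, since we only want to control the capacity inside $B'$, it is enough to pick $W_{B',j}$ open with $A\cap B'\subset W_{B',j}$ and $\capa_{\BV}(W_{B',j}\cap B')\le\capa_{\BV}(A\cap B')+1/j$, which exists by the very definition \eqref{eq:BVcapacity} of $\capa_{\BV}$ as an infimum over functions that are $\ge 1$ on a \emph{neighborhood}. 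Then set $D_{B'}:=\bigcap_{j\in\N}W_{B',j}$, a $G_\delta$ set containing $A\cap B'$ with $\capa_{\BV}(D_{B'}\cap B')\le\capa_{\BV}(A\cap B')$, hence equality by monotonicity.

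Finally I would assemble these into a single Borel set. A first attempt is $D:=\bigcap_{B'\in\mathcal B_0}\big(D_{B'}\cup(X\setminus B')\big)$, which is Borel, contains $A$, and satisfies $D\cap B'\subset D_{B'}\cap B'$ for each $B'\in\mathcal B_0$, giving $\capa_{\BV}(D\cap B')\le\capa_{\BV}(A\cap B')$; the reverse inequality is automatic since $A\subset D$. Then for a general ball $B=B(x,r)$ one has, by the countable-supremum reduction above, $\capa_{\BV}(D\cap B)=\sup_{B'\subset B}\capa_{\BV}(D\cap B')=\sup_{B'\subset B}\capa_{\BV}(A\cap B')=\capa_{\BV}(A\cap B)$, as desired.

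The main obstacle I anticipate is the countable-supremum reduction in the first paragraph: one must verify that $\capa_{\BV}(S\cap B)$ really is the supremum of $\capa_{\BV}(S\cap B')$ over balls $B'\in\mathcal B_0$ contained in $B$. This needs $S\cap B$ to be an increasing union of sets $S\cap B'_n$ with $B'_n\in\mathcal B_0$ and $B'_n\subset B$; writing $B=B(x,r)$, one takes $B'_n:=B(x_{k_n},q_n)$ with $x_{k_n}\to x$ and $q_n\uparrow r$ chosen so that $d(x_{k_n},x)+q_n<r$ (ensuring $B'_n\subset B$) while $\bigcup_n B'_n=B$ — this last point requires a small argument that every $y\in B(x,r)$ eventually lies in some $B'_n$, which follows by choosing $q_n$ close enough to $r$ and $x_{k_n}$ close enough to $x$. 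Once this is in place, \eqref{eq:continuity of BVcap} does the rest, and the remaining steps are routine set-theoretic bookkeeping.
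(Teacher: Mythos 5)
Your proof is correct, and it takes a genuinely different route from the paper's. The paper proceeds by a variational/extremality argument: it introduces the weighted set function $\widetilde{\capa}_{\BV}(H)=\sum_k 2^{-k}\bigl(\capa_{\BV}(H\cap B_k)-\capa_{\BV}(A\cap B_k)\bigr)/\capa_{\BV}(B_k)$ over Borel $H\supset A$, takes a minimizing Borel $D$, and shows the minimum is zero by contradiction (if some term were positive one could shrink $D$ inside $B_k$ using an open hull of $A\cap B_k$, decreasing $\widetilde{\capa}_{\BV}$). Your proof instead constructs the Borel hull directly: for each rational ball $B'$ you take a $G_\delta$ hull $D_{B'}=\bigcap_j W_{B',j}$ where $W_{B',j}$ is the open neighborhood coming from a near-optimal admissible function in the definition \eqref{eq:BVcapacity} — and you correctly observe that $W_{B',j}$ itself is a neighborhood of $W_{B',j}\cap B'$, so the same admissible function controls $\capa_{\BV}(W_{B',j}\cap B')$, avoiding the nonobvious question of outer regularity for $\capa_{\BV}$ of general sets. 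You then intersect $D_{B'}\cup(X\setminus B')$ over $B'$. Your approach is more constructive and elementary, avoiding the contradiction argument; the paper's approach is slightly more compact. Both use \eqref{eq:continuity of BVcap} to pass from the countable family of balls to arbitrary balls, and both use the fact that one can squeeze a rational ball between $B(x,s)$ and $B(x,r)$ for $s<r$. The one place your write-up needs a touch more care is the "increasing union" reduction: a general ball $B(x,r)$ cannot be written as an increasing union of \emph{arbitrary} nearby rational balls without arranging the nesting condition $d(x_{k_n},x_{k_{n+1}})+q_n\le q_{n+1}$ explicitly; the cleaner route (which the paper uses and which your set-up supports equally well) is to apply \eqref{eq:continuity of BVcap} to the centered balls $B(x,s)$, $s\uparrow r$, and then sandwich each $B(x,s)$ between a rational ball $B'$ and $B(x,r)$. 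With that small adjustment the argument is complete.
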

\begin{proof}
	Take a countable dense set $\{x_j\}_{j=1}^{\infty}$ in $X$ and
	let $\{B_k\}_{k=1}^{\infty}$ be the collection of all balls with center $x_j$
	for some $j$ and a rational radius.
	Define the set function
	\[
	\widetilde{\capa}_{\BV}(H):=\sum_{k=1}^{\infty}2^{-k}
	\frac{\capa_{\BV}(H\cap B_k)-\capa_{\BV}(A\cap B_k)}{\capa_{\BV}(B_k)},\quad H\supset A.
	\]
	Let $\beta:=\inf\{\widetilde{\capa}_{\BV}(H):\, H\supset A,\, H\ \textrm{is Borel}\}$.
	If we take a sequence of Borel sets $D_i\supset A$ such that
	$\widetilde{\capa}_{\BV}(D_i)<\beta+1/i$, then clearly $D:=\bigcap_{i=1}^{\infty}D_i$
	is a Borel set satisfying $\widetilde{\capa}_{\BV}(D)=\beta$.
	
	We show that $\beta=0$;
	suppose instead $\beta>0$. Then for some $k\in\N$ the $k$:th term in the definition
	of $\widetilde{\capa}_{\BV}(D)$ is nonzero.
	By the definition of the $\BV$-capacity, we find an open set $W\supset A\cap B_k$ with
	$\capa_{\BV}(W)<\capa_{\BV}(D\cap B_k)$.
	Then defining $D_0:=D\cap ((X\setminus B_k)\cup W)$, we have
	$\widetilde{\capa}_{\BV}(D_0)<\widetilde{\capa}_{\BV}(D)$,
	a contradiction.
	Thus $\beta=0$.
	
	Now take $x\in X$ and $r>0$. Let $\eps>0$.
	By \eqref{eq:continuity of BVcap}, for some $s<r$ we have
	\[
	\capa_{\BV}(D\cap B(x,r))\le \capa_{\BV}(D\cap B(x,s))+\eps,
	\]
	and then for some $k\in\N$ we have $B(x,s)\subset B_k\subset B(x,r)$.
	Then
	\begin{align*}
	\capa_{\BV}(D\cap B(x,r))
	&\le \capa_{\BV}(D\cap B(x,s))+\eps\\
	&\le \capa_{\BV}(D\cap B_k)+\eps\\
	&= \capa_{\BV}(A\cap B_k)+\eps\quad\textrm{since }\beta=0\\
	&\le \capa_{\BV}(A\cap B(x,r))+\eps.
	\end{align*}
	Since $\eps>0$ was arbitrary, we have the result.
\end{proof}

\begin{proposition}[{\cite[Proposition 4.5]{L-WC}}]\label{prop:positive capacity implies thickness}
	Let $x\in X$ with $\capa_1(\{x\})>0$. Then $\{x\}$ is $1$-thick at $x$, that is,
	$x\in b_1\{x\}$.
\end{proposition}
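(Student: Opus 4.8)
The plan is to bound the variational capacity $\rcapa_1(\{x\},B(x,2r))$ from below by a \emph{localized} codimension one Hausdorff content of $\{x\}$, and then to show that this content stays comparable to $\mu(B(x,r))/r$ along a well-chosen sequence of radii, so that the defining limit of $1$-thinness cannot vanish. First I would translate the hypothesis: by \eqref{eq:null sets of Hausdorff measure and capacity}, $\capa_1(\{x\})>0$ is equivalent to $\mathcal H(\{x\})>0$, and, as noted in the proof of Lemma~\ref{lem:positive Hausdorff measure points Borel}, this holds iff $\beta:=\liminf_{r\to 0}\mu(B(x,r))/r>0$. Writing $m(s):=\mu(B(x,s))/s$, the single-ball cover $\{B(x,s)\}$ gives $\mathcal H_R(\{x\})\le\inf_{0<s\le R}m(s)$, while the doubling property applied to a ball $B(x_i,r_i)$ of an admissible covering with $x\in B(x_i,r_i)$ gives $\mu(B(x_i,r_i))/r_i\ge C_d^{-1}m(r_i)$, hence $\mathcal H_R(\{x\})\ge C_d^{-1}\inf_{0<s\le R}m(s)$. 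In particular $\mathcal H_R(\{x\})$ is finite for each $R>0$ and is positive for all small $R>0$.

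The crucial step is a boxing-type estimate. Let $u$ be admissible in the definition of $\rcapa_1(\{x\},B(x,2r))$; truncating, we may assume $0\le u\le 1$, so that (in the Newtonian sense) $u=1$ at $x$, $u=0$ outside $B(x,2r)$, and $\int_X g_u\,d\mu$ is not increased. By the boxing inequality for compactly supported Newtonian functions — which rests on the Poincar\'e inequality, the coarea formula and a covering argument, see e.g. \cite{HaKi} — there are constants $C_0,C_1\ge 1$ depending only on $C_d,C_P,\lambda$ such that
\[
\mathcal H_{C_0 r}\bigl(\{u=1\}\bigr)\le C_1\int_X g_u\,d\mu .
\]
Since $x\in\{u=1\}$, monotonicity of the content and taking the infimum over all admissible $u$ yield
\[
\rcapa_1(\{x\},B(x,2r))\ \ge\ C_1^{-1}\,\mathcal H_{C_0 r}(\{x\}).
\]

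It then suffices to show $\limsup_{r\to 0}\mathcal H_{C_0 r}(\{x\})/m(r)>0$, for then, using $\{x\}\cap B(x,r)=\{x\}$,
\[
\limsup_{r\to 0} r\,\frac{\rcapa_1(\{x\},B(x,2r))}{\mu(B(x,r))}
=\limsup_{r\to 0}\frac{\rcapa_1(\{x\},B(x,2r))}{m(r)}>0,
\]
which is exactly the $1$-thickness of $\{x\}$ at $x$. For this I would choose radii $r_j\to 0$ with $m(r_j)\le 2\inf_{0<s\le r_j}m(s)$ (possible since the infimum is approached by $m(s)$ for some $s\le r_j$, after which one iterates). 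Then $\mathcal H_{C_0 r_j}(\{x\})\ge C_d^{-1}\inf_{0<s\le C_0 r_j}m(s)$, and for $s\in(r_j,C_0 r_j]$ the doubling property gives $m(s)\ge\mu(B(x,r_j))/(C_0 r_j)=m(r_j)/C_0$, so $\inf_{0<s\le C_0 r_j}m(s)\ge C_0^{-1}\inf_{0<s\le r_j}m(s)\ge (2C_0)^{-1}m(r_j)$. Hence $\mathcal H_{C_0 r_j}(\{x\})/m(r_j)\ge (2C_0 C_d)^{-1}$, and combining with the displayed lower bound for $\rcapa_1$ gives the conclusion with an explicit positive constant depending only on $C_d,C_P,\lambda$.

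The main obstacle is the boxing-type estimate in the second paragraph: it is the only genuinely analytic ingredient, and some care is needed to state it with the \emph{localized} content $\mathcal H_{C_0 r}$ (not $\mathcal H$, which here may well be infinite) and for Newtonian functions that merely vanish outside a ball rather than having arbitrary compact support. Once that is available, the rest is routine manipulation with the doubling property together with the comparison $C_d^{-1}\inf_{0<s\le R}\mu(B(x,s))/s\le\mathcal H_R(\{x\})\le\inf_{0<s\le R}\mu(B(x,s))/s$.
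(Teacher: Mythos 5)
Your overall strategy is the natural one: lower-bound $\rcapa_1(\{x\},B(x,2r))$ by the restricted content $\mathcal H_{C_0 r}(\{x\})$, and then compare the latter to $\mu(B(x,r))/r$ along a well-chosen sequence of radii. The reduction of the hypothesis to $\liminf_{r\to 0}\mu(B(x,r))/r>0$ via \eqref{eq:null sets of Hausdorff measure and capacity}, the two-sided comparison $C_d^{-1}\inf_{0<s\le R}m(s)\le\mathcal H_R(\{x\})\le\inf_{0<s\le R}m(s)$ with $m(s):=\mu(B(x,s))/s$, and the selection of radii $r_j\to 0$ with $m(r_j)\le 2\inf_{0<s\le r_j}m(s)$ followed by the doubling estimate for $s\in(r_j,C_0 r_j]$ are all correct and require only elementary manipulations.

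The genuine gap is the one you flag yourself: the boxing-type estimate $\mathcal H_{C_0 r}(\{u=1\})\le C_1\int_X g_u\,d\mu$ is not a statement you can simply quote from \cite{HaKi}, and in the form you need it has two ingredients that must be supplied. First, one has to locate a stopping scale $s\lesssim r$ at which the density $\mu(B(x,s)\cap\{u>t\})/\mu(B(x,s))$ of a suitable level set has dropped to about $1/2$, and apply the relative isoperimetric inequality there to get $\mu(B(x,s))/s\lesssim P(\{u>t\},X)\le\int_X g_u\,d\mu$; that such a scale exists at distance $\lesssim r$ (rather than just somewhere) uses the quantitative connectivity, i.e.\ reverse doubling, that comes from doubling plus the Poincar\'e inequality, and this is precisely what produces the constant $C_0$. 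Second, for this argument to start one must know that $x$ is a density-$1$ point of $\{u>t\}$ for every $t<1$, not merely that $u(x)=1$ in the pointwise Newtonian sense; this is not automatic for a general point, but here it follows because $N^{1,1}(X)$ functions have Lebesgue points outside a set of $\capa_1$-capacity zero and the hypothesis $\capa_1(\{x\})>0$ forces $x$ to lie outside any such exceptional set. Once these two points are made explicit the proof closes. Note that the paper itself gives no proof of this proposition but cites \cite[Proposition 4.5]{L-WC}, so a comparison with ``the paper's own proof'' is not literally possible from the text provided; however, the route you sketch --- a Maz$'$ya/boxing estimate linking variational capacity to a restricted Hausdorff content, then a choice of radii nearly attaining $\inf_s m(s)$ --- is the standard and expected one.
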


According to \cite[Proposition 6.16]{BB}, if $x\in X$, $0<r<\frac 18\diam X$,
and $A\subset B(x,r)$, then
for some constant $C=C(C_d,C_P,\lambda)$,
\begin{equation}\label{eq:comparison of capacities}
\frac{\capa_1(A)}{C(1+r)}\le \rcapa_1(A,B(x,2r))\le 2\left(1+\frac{1}{r}\right)\capa_1(A).
\end{equation}

Now we prove the existence of a thickness point in any set of nonzero
$1$-capacity, which will immediately imply the fine Kellogg property.
In the proof below, the reason for using the $\BV$-capacity is
that it is continuous
with respect to increasing sequences of sets, which in particular allowed us to prove the measurability results of Lemma \ref{lem:Borel measurability}
and Lemma \ref{lem:BV capacity Borel regularity}.

\begin{theorem}\label{thm:fine Kellogg property}
Let $A\subset X$ with $\capa_1(A)>0$.
Then there exists a point $x\in A\cap b_1 A$.
\end{theorem}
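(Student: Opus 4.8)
The plan is to argue by contradiction: assume $A$ is $1$-thin at every point of $A$, and deduce that $\capa_1(A)=0$, contrary to hypothesis. First note that under this assumption no $x\in A$ can satisfy $\capa_1(\{x\})>0$: since $\{x\}\subset A$, Proposition \ref{prop:positive capacity implies thickness} would make $A$ $1$-thick at $x$. Hence $\capa_1(\{x\})=0$, equivalently $\mathcal H(\{x\})=0$, for every $x\in A$ by \eqref{eq:null sets of Hausdorff measure and capacity}. Next, by Lemma \ref{lem:BV capacity Borel regularity} fix a Borel set $D\supset A$ with $\capa_{\BV}(D\cap B)=\capa_{\BV}(A\cap B)$ for every ball $B$. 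Using \eqref{eq:comparison of capacities}, \eqref{eq:Newtonian and BV capacities are comparable}, these equalities, and the description of $1$-thinness in terms of the $\BV$-capacity (cf.\ Definition \ref{def:1 fine topology} and \cite{L-FC}), one checks that $D$ is $1$-thin at exactly those points where $A$ is, and that this common set $T$ is Borel by Lemma \ref{lem:Borel measurability}. Since $A\subset T$, replacing $D$ by $D\cap T$ preserves the equalities $\capa_{\BV}(D\cap B)=\capa_{\BV}(A\cap B)$; so we may also assume that $D$ is $1$-thin at every one of its own points, whence (as $\{x\}\subset D$ is then $1$-thin at $x$, so $\capa_1(\{x\})=0$) $\mathcal H(\{x\})=0$ for all $x\in D$. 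Finally $\capa_{\BV}(D)>0$, since $\capa_{\BV}(A)>0$ forces $\capa_{\BV}(A\cap B_k)=\capa_{\BV}(D\cap B_k)>0$ for some $B_k$ in a countable covering $\{B_k\}$ of $X$.

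Now pass to a compact model set. The set function $\capa_{\BV}$ is monotone, continuous along increasing sequences of arbitrary sets by \eqref{eq:continuity of BVcap}, and continuous along decreasing sequences of compact sets (immediate from the definition of $\capa_{\BV}$ together with compactness), hence it is a Choquet capacity; in particular the Borel set $D$ is capacitable, so it contains a compact set $K_1$ with $\capa_{\BV}(K_1)>0$, and then $\mathcal H(K_1)>0$. Since every $x\in K_1\subset D$ has $\mathcal H(\{x\})=0$, Theorem \ref{thm:existence of subset} applies to $K_1$ and yields a compact $K\subset K_1$ with $0<\mathcal H(K)<\infty$. As $K\subset K_1\subset D$, monotonicity of the variational capacity in its first argument shows that $K$ is still $1$-thin at every point of $K$.

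For this $K$ apply Theorem \ref{thm:Frostman for compact sets} with a radius $R>0$ chosen so small that $\mathcal H_{10R}(K)>0$: we obtain a Radon measure $\nu$ on $K$ with $\M_R\nu\le 1$ on $X$ and $0<\mathcal H_{10R}(K)\le C_F\nu(K)$, hence $\nu(K)>0$. The analytic input is the (known) fact that a measure with bounded maximal function is controlled by capacity: for $x\in X$ and all small $r$,
\[
\nu(B(x,r))=\nu(K\cap B(x,r))\le C\,\rcapa_1\bigl(K\cap B(x,r),B(x,2r)\bigr),\qquad C=C(C_d,C_P,\lambda).
\]
Combining this with the $1$-thinness of $K$ at $x\in K$ gives $\lim_{r\to 0}r\,\nu(B(x,r))/\mu(B(x,r))=0$ for every $x\in K$. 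It remains to see this forces $\nu(K)=0$. Fix $t>0$ and write $K=\bigcup_{j}K_j$ with $K_j:=\{x\in K:\ s\,\nu(B(x,s))/\mu(B(x,s))\le t\text{ for all }0<s\le 1/j\}$, an increasing family. Given any covering of $K_j$ by balls of radius at most $1/(2j)$, after enlarging each ball slightly so as to center it in $K_j$ (at the cost of a factor depending only on $C_d$) we may sum the inequality $\nu(B)\le t\,\mu(B)/\rad(B)$ over the covering balls; taking the infimum over such coverings gives $\nu(K_j)\le C(C_d)\,t\,\mathcal H(K_j)\le C(C_d)\,t\,\mathcal H(K)$, and letting $j\to\infty$ we get $\nu(K)\le C(C_d)\,t\,\mathcal H(K)$. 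Since $\mathcal H(K)<\infty$, letting $t\to 0$ yields $\nu(K)=0$, contradicting $\nu(K)>0$. Thus the assumption fails and $A\cap b_1A\neq\emptyset$.

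The principal difficulty is everything preceding the Frostman argument, namely the reduction of the arbitrary set $A$ to a compact set $K$ that simultaneously (i) has positive but \emph{finite} $\mathcal H$-measure and (ii) is $1$-thin at every one of its own points. Part (i) is exactly where Theorem \ref{thm:existence of subset} — and, before it, the capacitability of $\capa_{\BV}$ needed to descend to a compact set at all — enters; part (ii) is where the Borel-regularity and measurability lemmas (Lemmas \ref{lem:Borel measurability}, \ref{lem:BV capacity Borel regularity}, \ref{lem:positive Hausdorff measure points Borel}) are essential, since one must know that the set of thin points is Borel in order to cut it out of $D$ while keeping capacities on balls unchanged. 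Once $K$ is in hand, the Frostman measure and the ensuing density computation are routine.
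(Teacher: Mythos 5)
Your reduction from an arbitrary set $A$ with $\capa_1(A)>0$ to a compact set $K$ with $0<\mathcal H(K)<\infty$ that is $1$-thin at every one of its own points follows the paper's strategy closely (Borel regularity of $\capa_{\BV}$, measurability of the thinness function, capacitability, and Theorem \ref{thm:existence of subset}), and this part is fine --- in fact cutting $D$ down to the Borel set of thin points $T$ at once, rather than intersecting with two separate Borel sets as the paper does, is a minor tidying. (Also, $\capa_{\BV}(D)\ge\capa_{\BV}(A)>0$ is immediate by monotonicity since $A\subset D$; the countable-covering detour there is not needed.)

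The gap is in the final step. You invoke, as a ``(known) fact,'' the inequality
\[
\nu\bigl(K\cap B(x,r)\bigr)\le C\,\rcapa_1\bigl(K\cap B(x,r),B(x,2r)\bigr),\qquad C=C(C_d,C_P,\lambda),
\]
for the Frostman measure $\nu$ satisfying $\M_R\nu\le 1$. This is a Maz'ya-type trace/admissibility theorem for $p=1$: it says that a measure whose codimension-one fractional maximal function is bounded is dominated, on arbitrary sets, by the variational $1$-capacity. In the metric setting this is \emph{not} an elementary consequence of the definitions; the natural route to it goes through a coarea argument and a boxing inequality controlling $\nu(E)$ by $P(E,X)$ for sets $E$ of finite perimeter, plus the identification $P(E,\cdot)\approx\mathcal H\lfloor\partial^*E$, and one must also handle the fact that a Newtonian test function $u$ has $u\ge1$ only $1$-quasi-everywhere on $K\cap B(x,r)$ so that $\nu$ must be known a priori to vanish on $\capa_1$-null sets. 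None of this is self-evident, and you neither prove it nor cite a reference; as written this is the load-bearing analytic step and it is unjustified. Without it the thinness of $K$ and the nondegeneracy of $\nu$ do not interact, and the contradiction does not close.

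Note also that once $K$ is in hand, the Frostman measure is not actually needed: the paper's own endgame works with $\capa_{\BV}$ directly. Applying Egorov (justified by Lemma \ref{lem:Borel measurability}) to get a further compact $K_1\subset K$ of positive finite $\mathcal H$-measure on which
\[
r\,\frac{\capa_{\BV}(K_1\cap B(x,r))}{\mu(B(x,r))}\to 0
\]
uniformly, and then covering $K_1$ by small balls centered in $K_1$ with $\sum\mu(B_i)/r_i$ close to $\mathcal H(K_1)$, one bounds $\capa_{\BV}(K_1)$ by countable subadditivity and sends the uniform bound to zero, contradicting $\mathcal H(K_1)>0$. This avoids both Frostman and the trace inequality entirely. (Your own splitting $K=\bigcup_j K_j$ is in spirit a version of this, but you apply it to $\nu$ rather than to $\capa_{\BV}$, which is precisely why you need the trace inequality to transfer information from thinness to $\nu$.) So the proposal is not wrong in outline, but the key analytic ingredient it rests on is unsupported, whereas a more elementary and self-contained argument is available.
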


\begin{proof}
Suppose $A\cap b_1 A= \emptyset$, so that
\[
\lim_{r\to 0}r\frac{\rcapa_1(A\cap B(x,r),B(x,2r))}{\mu(B(x,r))}= 0
\]
for all $x\in A$.
By Proposition \ref{prop:positive capacity implies thickness} and
\eqref{eq:null sets of Hausdorff measure and capacity} we know that $\mathcal H(\{x\})=0$
for all $x\in A$.
By \eqref{eq:Newtonian and BV capacities are comparable},
$\capa_{\BV}(A)>0$.
By \eqref{eq:comparison of capacities} and \eqref{eq:Newtonian and BV capacities are comparable},
\[
\lim_{r\to 0}r\frac{\capa_{\BV}(A\cap B(x,r))}{\mu(B(x,r))}= 0
\]
for all $x\in A$.
By Lemma \ref{lem:BV capacity Borel regularity} we find
a Borel set $D\supset A$ such that
$\capa_{\BV}(D\cap B)=\capa_{\BV}(A\cap B)$ for every ball $B=B(x,r)$.
Thus for all $x\in A$,
\[
\lim_{r\to 0}r\frac{\capa_{\BV}(D\cap B(x,r))}{\mu(B(x,r))}=0.
\]
Then define
\[
F:=\left\{x\in X:\, \lim_{r\to 0}r\frac{\capa_{\BV}(D\cap B(x,r))}{\mu(B(x,r))}=0\right\}\supset A,
\]
which is a Borel set by Lemma \ref{lem:Borel measurability}.
Since we had $\mathcal H(\{x\})=0$ for all $x\in A$, we can define
\[
H:=D\cap F\cap \{x\in X:\,\mathcal H(\{x\})=0\}\supset A,
\]
which is a Borel set by Lemma \ref{lem:positive Hausdorff measure points Borel}.
For every $x\in H$ we now have
\[
\lim_{r\to 0}r\frac{\capa_{\BV}(H\cap B(x,r))}{\mu(B(x,r))}
\le \lim_{r\to 0}r\frac{\capa_{\BV}(D\cap B(x,r))}{\mu(B(x,r))}=0.
\]
Moreover, $\capa_{\BV}(H)>0$ since $A\subset H$.
(The point so far has simply been to pass from $A$ to the Borel set $H$.)

By the fact that $\capa_{\BV}$ is a Choquet capacity,
see \cite[Corollary 3.8]{HaKi}, we find a compact set $K\subset H$ with $\capa_{\BV}(K)>0$.
By \eqref{eq:null sets of Hausdorff measure and capacity}, $\mathcal H(K)>0$.
Now by Theorem \ref{thm:existence of subset}
we find a compact set $K_0\subset K$ with $0<\mathcal H(K_0)<\infty$
and then clearly
for every $x\in K_0$,
\[
\lim_{r\to 0}r\frac{\capa_{\BV}(K_0\cap B(x,r))}{\mu(B(x,r))}=0.
\]
By Egorov's theorem, which we can apply by Lemma \ref{lem:Borel measurability},
we find $K_1\subset K_0$ with $0<\mathcal H(K_1)<\infty$
such that
\[
r\frac{\capa_{\BV}(K_1\cap B(x,r))}{\mu(B(x,r))}\to 0\quad\textrm{as }r\to 0 
\]
uniformly for all $x\in K_1$. Fix $\eps>0$. For some $\delta>0$, the above quantity
is less than $\eps$ whenever
$r<\delta$. We find a covering $\{B(x_i,r_i)\}_{i=1}^{\infty}$ of $K_1$,
with $r_i<\delta/2$, such that
\[
\sum_{i=1}^{\infty}\frac{\mu(B(x_i,r_i))}{r_i}<\mathcal H(K_1)+\eps.
\]
We can assume that there exists $y_i\in B(x_i,r_i)\cap K_1$ for all $i\in\N$, and then the balls $B(y_i,2r_i)$ also cover $K_1$.
Thus
\begin{align*}
\capa_{\BV}(K_1)
&\le \sum_{i=1}^{\infty}\capa_{\BV}(K_1\cap B(y_i,2r_i))\\
&\le \eps\sum_{i=1}^{\infty}\frac{\mu(B(y_i,2r_i))}{2r_i}\\
&\le \eps\sum_{i=1}^{\infty}\frac{\mu(B(x_i,4r_i))}{2r_i}
<C_d^2\eps(\mathcal H(K_1)+\eps).
\end{align*}
Letting $\eps\to 0$,
we get $\capa_{\BV}(K_1)=0$, so that $\mathcal H(K_1)=0$ by \eqref{eq:null sets of Hausdorff measure and capacity}, which is a contradiction.
\end{proof}

Now we get the following fine Kellogg property for $p=1$.
For the case $p>1$, see \cite[Corollary 1.3]{BBL-CCK}.

\begin{corollary}\label{cor:fine Kellogg}
Let $A\subset X$. Then $\capa_1(A\setminus b_1 A)=0$.
\end{corollary}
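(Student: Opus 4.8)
The corollary is the standard "iterate the existence of a thickness point" argument, so the plan is to reduce $\operatorname{Corollary}\ref{cor:fine Kellogg}$ to $\operatorname{Theorem}\ref{thm:fine Kellogg property}$ by a transfinite (or, more precisely, quasi-Lindel\"of-free) exhaustion. Set $E:=A\setminus b_1A$, the set of points of $A$ at which $A$ is $1$-thin. We want $\capa_1(E)=0$. Suppose not. The key observation is that $A$ is $1$-thin at every point of $E$, hence a fortiori $E\subset A$ is $1$-thin at every point of $E$ (thinness of $A$ at $x$ implies thinness of the smaller set $E$ at $x$, by monotonicity of $\rcapa_1(\cdot,B(x,2r))$). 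So $E\cap b_1E=\emptyset$. But $\capa_1(E)>0$, and now $\operatorname{Theorem}\ref{thm:fine Kellogg property}$ applied to the set $E$ produces a point $x\in E\cap b_1E$, a contradiction. Hence $\capa_1(E)=0$.

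In more detail, the steps I would carry out are: first, record that for any sets $S\subset T$ and any $x$, $\rcapa_1(S\cap B(x,r),B(x,2r))\le \rcapa_1(T\cap B(x,r),B(x,2r))$, so if $T$ is $1$-thin at $x$ then so is $S$; this is just monotonicity of the variational capacity, already available from the references in the Preliminaries. Second, unwind the definitions: $x\in A\setminus b_1A$ means exactly $x\in A$ and $A$ is $1$-thin at $x$ (i.e.\ $\lim_{r\to 0}r\,\rcapa_1(A\cap B(x,r),B(x,2r))/\mu(B(x,r))=0$). Third, apply the monotonicity from step one with $S=E:=A\setminus b_1A$ and $T=A$ to conclude that $E$ is $1$-thin at every point of $E$, i.e.\ $E\cap b_1E=\emptyset$. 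Fourth, invoke $\operatorname{Theorem}\ref{thm:fine Kellogg property}$ in contrapositive form: since $E\cap b_1E=\emptyset$, we must have $\capa_1(E)=0$.

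There is essentially no obstacle here: all the real work — the measurability lemmas, the subset-of-finite-Hausdorff-measure construction, and the Frostman/Krein--Milman argument — is already inside $\operatorname{Theorem}\ref{thm:fine Kellogg property}$. The only point that requires a moment's care is the monotonicity step, namely verifying that $1$-thinness passes to subsets; this is immediate from the definition of $\rcapa_1$ since shrinking the set being covered only shrinks the class over which one does \emph{not} take an infimum — precisely, any admissible pair $(u,g_u)$ for $A\cap B(x,r)$ in $B(x,2r)$ is also admissible for $E\cap B(x,r)$, so $\rcapa_1(E\cap B(x,r),B(x,2r))\le \rcapa_1(A\cap B(x,r),B(x,2r))$. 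Thus the corollary follows from the theorem in one short paragraph, exactly as in the case $p>1$ treated in \cite[Corollary 1.3]{BBL-CCK}.
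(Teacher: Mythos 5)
Your proof is correct and is essentially the same as the paper's: both reduce to Theorem \ref{thm:fine Kellogg property} by observing that monotonicity of $\rcapa_1$ gives $b_1(A\setminus b_1A)\subset b_1A$, whence $(A\setminus b_1A)\cap b_1(A\setminus b_1A)=\emptyset$. The only cosmetic difference is that you phrase the monotonicity step as ``thinness passes to subsets'' while the paper states it as $b_1F\subset b_1A$; these are the same fact.
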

\begin{proof}
Let $F:=A\setminus b_1 A$. Then clearly $b_1 F\subset b_1 A$, so that
$F\cap b_1 F\subset b_1 F\setminus b_1 A=\emptyset$, and then
by Theorem \ref{thm:fine Kellogg property}, $\capa_1(F)=0$.
\end{proof}

\begin{remark}
In this section we have not really studied or applied
fine potential theory, but rather just
basic properties of capacities and the measure-theoretic result of the previous section.
By contrast, in the case $p>1$, the fine Kellogg property is
deduced from the Choquet property, which we only prove for $p=1$ in Section \ref{sec:choquet}.
The kind of method we used in this section does not
seem to be available in
the case $p>1$: we used the fact that $\mathcal H$ and $\capa_{1}$
have the same null sets but the analog of this is not true for $p>1$.
\end{remark}

\section{The quasi-Lindel\"of principle}

In this section we prove the quasi-Lindel\"of principle
for the $1$-fine topology, by using the fine Kellogg property.

We will need the following fact given in \cite[Lemma 11.22]{BB}.
\begin{lemma}\label{lem:capacity wrt different balls}
	Let $x\in X$, $r>0$, and $A\subset B(x,r)$. Then for every $1<s<t$
	with $tr<\frac 14 \diam X$, we have
	\[
	\rcapa_1(A,B(x,tr))\le \rcapa_1(A,B(x,sr))\le C_S\left(1+\frac{t}{s-1}\right)\rcapa_1(A,B(x,tr)),
	\]
	where $C_S=C_S(C_d,C_P,\lambda)$.
\end{lemma}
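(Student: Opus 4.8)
The plan is to handle the two inequalities separately. The first, $\rcapa_1(A,B(x,tr))\le\rcapa_1(A,B(x,sr))$, is pure monotonicity of the variational capacity: since $1<s<t$ we have $B(x,sr)\subset B(x,tr)$, so every $u\in N^{1,1}(X)$ with $u\ge1$ on $A$ and $u=0$ on $X\setminus B(x,sr)$ also vanishes on $X\setminus B(x,tr)$, hence is admissible in the definition of $\rcapa_1(A,B(x,tr))$; taking infima over such $u$ and their upper gradients gives the inequality.

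For the reverse inequality I would run the usual cutoff argument. Fix $\eps>0$ and choose $u\in N^{1,1}(X)$ with an upper gradient $g_u$, $u\ge1$ on $A$, $u=0$ on $X\setminus B(x,tr)$, and $\int_X g_u\,d\mu\le\rcapa_1(A,B(x,tr))+\eps$; truncating $u$, I may assume $0\le u\le1$ without increasing $g_u$. Put $\eta(y):=\max\{0,\min\{1,(sr-d(x,y))/((s-1)r)\}\}$, so that $\eta=1$ on $B(x,r)\supset A$, $\eta=0$ on $X\setminus B(x,sr)$, and $g_\eta:=\tfrac{1}{(s-1)r}\ch_{B(x,sr)\setminus B(x,r)}$ is an upper gradient of $\eta$ --- the factor $1/((s-1)r)$ being the reciprocal width of the annulus separating $B(x,r)$ from $X\setminus B(x,sr)$. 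Then $v:=\eta u$ lies in $N^{1,1}(X)$, equals $u\ge1$ on $A$ (where $\eta=1$), and vanishes on $X\setminus B(x,sr)$ (where $\eta=0$), so it is admissible for $\rcapa_1(A,B(x,sr))$; by the Leibniz rule for upper gradients of bounded functions it has the upper gradient $g_v:=\eta g_u+u\,g_\eta$, which satisfies $g_v\le g_u+\tfrac{1}{(s-1)r}\ch_{B(x,sr)}u$ since $0\le\eta\le1$ and $0\le u\le1$. Integrating and using $B(x,sr)\subset B(x,tr)$,
\[
\rcapa_1(A,B(x,sr))\le\int_X g_v\,d\mu\le\int_X g_u\,d\mu+\frac{1}{(s-1)r}\int_{B(x,tr)}u\,d\mu.
\]

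The one nonelementary ingredient is a Sobolev (Maz'ya) inequality for functions vanishing outside a ball: I claim that, because $tr<\tfrac14\diam X$,
\[
\int_{B(x,tr)}u\,d\mu\le C\,tr\int_X g_u\,d\mu
\]
for some $C=C(C_d,C_P,\lambda)$. To prove this one applies the $(1,1)$-Poincar\'e inequality on $B(x,2tr)$ and uses that the ambient space is reverse doubling (a standard consequence of completeness, doubling, and the Poincar\'e inequality, via connectedness of $X$); hence the annulus $B(x,2tr)\setminus B(x,tr)$, on which $u\equiv0$, carries a definite fraction of $\mu(B(x,2tr))$, which forces $|u_{B(x,2tr)}|\le C\,tr\,\vint{B(x,2\lambda tr)}g_u\,d\mu$. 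Combining this bound with the Poincar\'e estimate for $\vint{B(x,2tr)}|u-u_{B(x,2tr)}|\,d\mu$ yields the claim (when $2\lambda tr\ge\diam X$ one simply integrates $g_u$ over $X$). Feeding the claim back into the previous display gives $\rcapa_1(A,B(x,sr))\le\bigl(1+\tfrac{Ct}{s-1}\bigr)\int_X g_u\,d\mu\le\bigl(1+\tfrac{Ct}{s-1}\bigr)\bigl(\rcapa_1(A,B(x,tr))+\eps\bigr)$; letting $\eps\to0$ and absorbing $1+\tfrac{Ct}{s-1}\le C_S\bigl(1+\tfrac{t}{s-1}\bigr)$ into a constant $C_S=C_S(C_d,C_P,\lambda)$ completes the argument.

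The step I expect to be the main obstacle is exactly this Sobolev inequality for functions vanishing outside $B(x,tr)$: it is the only place where the hypothesis $tr<\tfrac14\diam X$ is used (to guarantee that $u$ vanishes on a substantial part of $B(x,2tr)$, via reverse doubling), whereas the monotonicity step, the cutoff construction, and the Leibniz estimate are all routine.
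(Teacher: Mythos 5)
The paper does not prove this lemma; it is invoked directly as a citation to \cite[Lemma 11.22]{BB}, so there is no in-paper proof to compare against. Your argument is correct and is essentially the standard one used for results of this type: monotonicity of $\rcapa_1$ in the reference set for the easy direction, and for the reverse inequality the cutoff $\eta$ with upper gradient $\frac{1}{(s-1)r}\ch_{B(x,sr)\setminus B(x,r)}$, the Leibniz rule $g_{\eta u}\le\eta g_u+u g_\eta$, and a Sobolev/Maz'ya-type estimate $\int_{B(x,tr)}u\,d\mu\le C\,tr\int_X g_u\,d\mu$ for $u$ vanishing outside $B(x,tr)$. Your justification of that estimate is sound: the hypothesis $tr<\frac14\diam X$ guarantees $B(x,2tr)\ne X$, so reverse doubling (a consequence of completeness, doubling, and the Poincar\'e inequality via connectedness/quasiconvexity) gives $\mu(B(x,2tr)\setminus B(x,tr))\ge c\,\mu(B(x,2tr))$, which combined with $u\ge0$ vanishing on the annulus and the $(1,1)$-Poincar\'e inequality on $B(x,2tr)$ controls $u_{B(x,2tr)}$, hence $\int u$. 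All constants produced depend only on $C_d$, $C_P$, $\lambda$, matching the statement.
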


In proving the quasi-Lindel\"of principle for $p=1$,
we follow the proof of \cite[Theorem 2.3]{HKM-CFT}, where the property was shown
for $p>1$ (in the Euclidean setting and with slightly different definitions).

\begin{theorem}\label{thm:quasi-Lindelof}
For every family $\mathcal V$ of $1$-finely open sets there is a countable subfamily
$\mathcal V'$ such that
\[
\capa_1\left(\bigcup_{U\in\mathcal V} U\setminus \bigcup_{U'\in\mathcal V'}U'\right)=0.
\]
\end{theorem}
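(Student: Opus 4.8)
The plan is to follow the standard argument for the quasi-Lindel\"of principle, adapting the $p>1$ proof of \cite[Theorem 2.3]{HKM-CFT} to the present setting by replacing its use of the (usual) Kellogg property with our fine Kellogg property (Corollary \ref{cor:fine Kellogg}). Set $G:=\bigcup_{U\in\mathcal V}U$. For each $x\in G$ pick some $U_x\in\mathcal V$ with $x\in U_x$; since $U_x$ is $1$-finely open, $X\setminus U_x$ is $1$-thin at $x$, so there is a radius $r=r(x)>0$ (which we may take rational, and bounded by, say, $\tfrac1{16}\diam X$) with
\[
r\frac{\rcapa_1((X\setminus U_x)\cap B(x,r),B(x,2r))}{\mu(B(x,r))}<\eps_0
\]
for a fixed small threshold $\eps_0$ — equivalently, $U_x$ contains ``most'' of $B(x,r)$ in the capacity sense. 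The key point, which goes back to the $p>1$ argument, is that the collection of pairs (center, rational radius) arising this way can be organized into countably many classes, and from each nonempty class one extracts a single representative set from $\mathcal V$; the countable union $\mathcal V'$ of these representatives will be shown to cover $G$ up to a set of capacity zero.

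More precisely, I would fix a countable dense set $\{z_j\}\subset X$ and, for each $j$ and each rational $q>0$, consider the (possibly empty) class of points $x\in G$ for which one may take the ball above to be contained in $B(z_j,q)$ with comparable radius — so that membership is witnessed by a single $U\in\mathcal V$ that is ``capacity-fat'' inside $B(z_j,q)$. For each such nonempty class choose one witness $U_{j,q}\in\mathcal V$, and let $\mathcal V'$ be the countable family of all these $U_{j,q}$. Write $E:=G\setminus\bigcup_{U'\in\mathcal V'}U'$. The claim is $\capa_1(E)=0$. By construction, for every $x\in E$ the set $X\setminus\bigcup_{\mathcal V'}U'$, being contained in $X\setminus U_{j,q}$ for the relevant indices, is $1$-thin at $x$; hence $X\setminus E\supset\bigcup_{\mathcal V'}U'$ is ``large'' at $x$, which forces $X\setminus E$ to be $1$-thick at $x$ while $E$ itself is $1$-thin at $x$. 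Therefore $x\notin b_1 E$. This shows $E\cap b_1 E=\emptyset$, and then Theorem \ref{thm:fine Kellogg property} (in the contrapositive form: a set disjoint from its $1$-base has zero $1$-capacity) yields $\capa_1(E)=0$, as desired. One should also invoke Lemma \ref{lem:capacity wrt different balls} and \eqref{eq:comparison of capacities} to pass freely between the variational capacities with respect to the various comparable balls $B(x,2r)$, $B(z_j,2q)$, etc., when verifying the thinness/thickness bookkeeping.

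The main obstacle is the combinatorial bookkeeping in the second paragraph: making precise the finitely-or-countably-many ``classes'' so that (a) every $x\in G$ falls into at least one nonempty class whose chosen witness $U_{j,q}$ actually contains a capacity-large portion of a ball around $x$, and (b) for $x\in E$ one genuinely deduces thinness of $E$ at $x$ rather than merely of some single $X\setminus U_{j,q}$. The subtlety is that $1$-thinness of $X\setminus U_x$ at $x$ is an asymptotic (limit as $r\to0$) statement, whereas the countable decomposition only supplies one fixed scale; one resolves this by noting that the definition of $1$-finely open already hands us, for \emph{every} small scale, a ball on which $X\setminus U_x$ is as thin as we like, and by choosing the rational radii $q$ shrinking to $0$ so that the classes exhaust all sufficiently small scales. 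The doubling property and Lemma \ref{lem:capacity wrt different balls} then guarantee that enlarging or shrinking balls by bounded factors only changes the relevant capacity ratios by bounded factors, so the ``thin at one scale in a comparable ball'' information assembles into ``$E$ is thin at $x$ as $r\to0$''. Once this is set up carefully the conclusion is immediate from the fine Kellogg property; I expect the write-up to be essentially a transcription of \cite[Theorem 2.3]{HKM-CFT} with capacities interpreted in the $p=1$ sense and the Kellogg input replaced by Corollary \ref{cor:fine Kellogg}/Theorem \ref{thm:fine Kellogg property}.
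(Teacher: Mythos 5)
Your approach genuinely diverges from the paper's, even though you cite the same source. The paper (following \cite[Theorem 2.3]{HKM-CFT}) takes a variational route: it introduces a weighted, countably-additive-in-spirit set function $\widetilde{\capa}_1(A):=\sum_k 2^{-k}\rcapa_1(A\cap B_k,2B_k)/\rcapa_1(B_k,2B_k)$ over a countable basis of balls, lets $\beta$ be the infimum of $\widetilde{\capa}_1$ of the ``leftover'' set over all countable subfamilies, notes the infimum is attained at some countable $I_\infty$, and then, assuming $\beta>0$, invokes Theorem \ref{thm:fine Kellogg property} to produce a thickness point $x$ of $A=U\setminus\bigcup_{I_\infty}U_i$, picks $U_i\ni x$, and shows via Lemma \ref{lem:capacity wrt different balls} that $\widetilde{\capa}_1(A\setminus U_i)<\beta$, a contradiction. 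This neatly encodes all scales at once and never has to produce thinness of $E$ at each of its points explicitly. Your proposal instead tries to build $\mathcal V'$ directly by classifying points according to (center, rational radius) and extracting one witness per class, and then to show $E:=G\setminus\bigcup_{\mathcal V'}U'$ is $1$-thin at every one of its own points before applying the same Theorem \ref{thm:fine Kellogg property} in contrapositive form. That is a legitimate alternative architecture, and both strategies hinge on exactly the same key input.

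However, there is a genuine gap in your sketch as written. With a \emph{fixed} threshold $\eps_0$ in your class definition, the best one can extract for $x\in E$ is
\[
\limsup_{r\to 0} r\,\frac{\rcapa_1(E\cap B(x,r),B(x,2r))}{\mu(B(x,r))}\le C\eps_0
\]
for a structural constant $C$ coming from Lemma \ref{lem:capacity wrt different balls}, the doubling property, and the passage between $B(x,r)$ and the dyadic-type ball $B(z_j,q)$. That is bounded smallness, not genuine thinness. Your second paragraph identifies the single-scale-versus-asymptotic issue but offers only ``shrinking radii $q\to 0$'' as the remedy; shrinking $q$ alone still leaves the fixed $\eps_0$ in the bound. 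You also need to let the threshold tend to zero, i.e.\ run the class-and-witness construction for each $\eps_0=1/n$, obtaining countable $\mathcal V'_n$, and set $\mathcal V':=\bigcup_n\mathcal V'_n$ (still countable); then for $x\in E$ and each $n$, the scale-$r$ class relevant to $x$ at threshold $1/n$ is nonempty (witnessed by some $U_x\in\mathcal V$ with $x\in U_x$) for all sufficiently small $r$, and its chosen representative yields the bound $\lesssim 1/n$ on the ratio, whence the limit is zero. Once you add this extra countable index and nail down the nesting $B(x,r)\subset B(z_j,q)\subset B(x,Cr)$ so that the capacity comparisons actually transfer, the argument closes; as written, it does not.
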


\begin{proof}
Take a countable dense set $\{x_j\}_{j=1}^{\infty}$ in $X$ and
let $\{B_k\}_{k=1}^{\infty}$ be the collection of all balls with center $x_j$ for some $j$ and a
rational radius strictly less than $\tfrac{1}{16} \diam X$. Define the set function
\[
\widetilde{\capa}_1(A):=\sum_{k=1}^{\infty}2^{-k}\frac{\rcapa_1(A\cap B_k,2B_k)}{\rcapa_1(B_k,2B_k)},
\quad A\subset X,
\]
where the denominator is always strictly positive by \eqref{eq:comparison of capacities}.
Take a collection of $1$-finely open sets $\{U_i\}_{i\in \Lambda}$,
and let $U:=\bigcup_{i\in \Lambda}U_i$.
Let
\[
\beta:=\inf\left\{\widetilde{\capa}_1\left(U\setminus \bigcup_{i\in I}U_i\right):\,I\subset \Lambda\textrm{ is countable}\right\}.
\]
Choose countable sets $I_j\subset \Lambda$, $j\in\N$, such that
\[
\widetilde{\capa}_1\left(U\setminus \bigcup_{i\in I_j}U_i\right)<\beta+\frac 1j.
\]
Define the countable set
\[
I_{\infty}:=\bigcup_{j=1}^{\infty} I_j,
\]
so that for
\[
A:=U\setminus \bigcup_{i\in I_{\infty}} U_i
\]
we have $\widetilde{\capa}_1(A)=\beta$.
We show that $\beta=0$; suppose instead $\beta>0$. Then $\capa_1(A)>0$ by \eqref{eq:comparison of capacities},
and so by Theorem \ref{thm:fine Kellogg property}
there exists a point $x\in A\cap b_1 A$. Choose $i\in \Lambda$ such
that $x\in U_{i}$. Since $A\setminus U_i$ is $1$-thin at $x$ and $A$ is $1$-thick, we find
$0<r<\tfrac{1}{16} \diam X$ such that
\[
r\frac{\rcapa_1((A\setminus U_i)\cap B(x,r),B(x,2r))}{\mu(B(x,r))}<
\frac{1}{25C_S^2 C_d}\frac r2\frac{\rcapa_1(A\cap B(x,r/2),B(x,r))}{\mu(B(x,r/2))},
\]
and so
\begin{equation}\label{eq:thinness and thickness}
\rcapa_1((A\setminus U_i)\cap B(x,r),B(x,2r))<
\frac{1}{25C_S^2}\rcapa_1(A\cap B(x,r/2),B(x,r)).
\end{equation}
Then choose $k\in \N$ such that $B_k=B(y,s)$ with $d(y,x)<r/8$ and
$3r/4<s< 7r/8$. Now
\begin{align*}
&\rcapa_1((A\setminus U_i)\cap B_k,2B_k)\\
&\qquad \le 5C_S\rcapa_1((A\setminus U_i)\cap B_k,4B_k)\quad\textrm{by Lemma }\ref{lem:capacity wrt different balls}\\
&\qquad \le 5C_S\rcapa_1((A\setminus U_i)\cap B_k,B(x,2r))\quad\textrm{since } B(x,2r)\subset 4B_k\\
&\qquad \le 5C_S\rcapa_1((A\setminus U_i)\cap B(x,r),B(x,2r))\quad\textrm{since }  B_k\subset B(x,r)\\
&\qquad < \frac{1}{5C_S} \rcapa_1(A\cap B(x,r/2),B(x,r))\quad\textrm{by }\eqref{eq:thinness and thickness}\\
&\qquad \le \rcapa_1(A\cap B(x,r/2),B(x,2r))\\
&\qquad \le  \rcapa_1(A\cap B(x,r/2),2B_k)\\
&\qquad \le  \rcapa_1(A\cap B_k,2B_k).
\end{align*}
It follows that $\widetilde{\capa}_1(A\setminus U_i)<\widetilde{\capa}_1(A)=\beta$, a contradiction.
Hence $\beta=0$ and so
$\widetilde{\capa}_1\left(U\setminus \bigcup_{i\in I_{\infty}}U_i\right)=0$.
Then by \eqref{eq:comparison of capacities} we see that
$\capa_1\left(U\setminus \bigcup_{i\in I_{\infty}}U_i\right)=0$.
\end{proof}

\section{The Choquet property}\label{sec:choquet}

In this section we prove the fact that $1$-finely open sets
are $1$-quasiopen, and then we prove the Choquet property for the $1$-fine
topology. To achieve these,
we use the weak Cartan property proved in \cite{L-WC},
as well as the fine Kellogg property and the quasi-Lindel\"of principle
of the previous sections.

Recall that a set $U\subset X$ is $1$-quasiopen if for every $\eps>0$ there is an
open set $G\subset X$ such that $\capa_1(G)<\eps$ and $U\cup G$ is open.
Quasiopen sets have the following stability.

\begin{lemma}\label{lem:stability of quasiopen sets}
Let $U\subset X$ be a $1$-quasiopen set and let $A\subset X$ be $\mathcal H$-negligible.
Then $U\setminus A$ and $U\cup A$ are $1$-quasiopen sets.
\end{lemma}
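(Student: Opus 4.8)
The plan is to reduce both claims to the defining property of $1$-quasiopenness together with the comparison $\capa_1(\cdot)\le C\capa_{\BV}(\cdot)$ and the equivalence of null sets for $\capa_1$ and $\mathcal H$ recorded in \eqref{eq:null sets of Hausdorff measure and capacity}. Since $A$ is $\mathcal H$-negligible, $\mathcal H(A)=0$, hence $\capa_1(A)=0$; but $\capa_1$ is an outer capacity (as noted after \eqref{eq:BVcapacity}), so for every $\eps>0$ there is an open set $V\supset A$ with $\capa_1(V)<\eps$. This open superset $V$ is the only extra ingredient needed beyond the quasiopenness of $U$.

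For the case $U\cup A$: fix $\eps>0$. Take an open $G$ with $\capa_1(G)<\eps/2$ and $U\cup G$ open, and take an open $V\supset A$ with $\capa_1(V)<\eps/2$. Then $G':=G\cup V$ is open, $\capa_1(G')\le\capa_1(G)+\capa_1(V)<\eps$ by subadditivity, and $(U\cup A)\cup G'=(U\cup G)\cup V$ is a union of two open sets, hence open. So $U\cup A$ is $1$-quasiopen.

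For the case $U\setminus A$: fix $\eps>0$, and as above pick open $G$ with $\capa_1(G)<\eps/2$, $U\cup G$ open, and open $V\supset A$ with $\capa_1(V)<\eps/2$. Set $G':=G\cup V$, so $\capa_1(G')<\eps$ and $G'$ is open. I claim $(U\setminus A)\cup G'$ is open. Indeed $A\subset V\subset G'$, so $(U\setminus A)\cup G'=(U\setminus A)\cup G'\cup A=U\cup G'=(U\cup G)\cup V$, which is again a union of two open sets and therefore open. Hence $U\setminus A$ is $1$-quasiopen. The argument is entirely routine; the only point worth flagging is the use of outer regularity of $\capa_1$ to upgrade the null set $A$ to an open set of small capacity, and of \eqref{eq:null sets of Hausdorff measure and capacity} to pass from $\mathcal H(A)=0$ to $\capa_1(A)=0$ in the first place — there is no substantial obstacle.
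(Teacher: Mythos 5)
Your proof is correct and takes essentially the same route as the paper: both reduce the claim to the identity $(U\setminus A)\cup G' = U\cup G' = (U\cup G)\cup V$ (valid because $A\subset V\subset G'$), using \eqref{eq:null sets of Hausdorff measure and capacity} to get $\capa_1(A)=0$ and outer regularity of $\capa_1$ to produce the small open neighborhood $V\supset A$. The only cosmetic difference is that you split $\eps$ into two halves up front, whereas the paper first takes $G$ with $\capa_1(G)<\eps$ and then exploits the remaining slack to choose $W$ with $\capa_1(G)+\capa_1(W)<\eps$; the content is identical.
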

\begin{proof}
Let $\eps>0$. Take an open set $G\subset X$ such that $\capa_1(G)<\eps$ and $U\cup G$ is an open set.
By \eqref{eq:null sets of Hausdorff measure and capacity} we know that $\capa_1(A)=0$, and since $\capa_1$
is an outer capacity, we find an open set $W\supset A$ such that $\capa_1(G)+\capa_1(W)<\eps$.
Now $(U\setminus A)\cup (G\cup W)=(U\cup G)\cup W$ is an open set with $\capa_1(G\cup W)<\eps$, so that
$U\setminus A$ is a $1$-quasiopen set. Similarly, $(U\cup A)\cup (G\cup W)=(U\cup G)\cup W$
is an open set, so that $U\cup A$ is  also a $1$-quasiopen set.
\end{proof}

The following fact about $1$-finely open and $1$-quasiopen sets is previously known.

\begin{proposition}[{\cite[Proposition 4.3]{L-Fed}}]\label{prop:quasiopen is finely open}
Every $1$-quasiopen set is the union of a $1$-finely open set and a $\mathcal H$-negligible set.
\end{proposition}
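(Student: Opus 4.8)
The plan is to show that $U$ differs from its own $1$-fine interior $\fint U$ by a set of $1$-capacity zero. Since $\fint U$ is $1$-finely open by Definition~\ref{def:1 fine topology}, and since a set has $1$-capacity zero if and only if it is $\mathcal H$-negligible by \eqref{eq:null sets of Hausdorff measure and capacity}, this immediately gives the decomposition $U=\fint U\cup(U\setminus\fint U)$ asserted in the proposition. Thus it suffices to produce a set $N_0\subset X$ with $\capa_1(N_0)=0$ such that $U\setminus N_0\subset\fint U$.

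To build $N_0$, I would encode the ``bad'' part of $X\setminus U$ into a single Newtonian function. Using $1$-quasiopenness, for each $j\in\N$ pick an open set $G_j\subset X$ with $\capa_1(G_j)<4^{-j}$ and $U\cup G_j$ open, and then a nonnegative $\psi_j\in N^{1,1}(X)$ with $\psi_j\ge 1$ on $G_j$ and $\|\psi_j\|_{N^{1,1}(X)}<4^{-j}$. Set $\psi:=\sum_{j=1}^\infty\psi_j$; the partial sums form a Cauchy sequence in $N^{1,1}(X)$, so $\psi\in N^{1,1}(X)$ and hence $\psi(x)<\infty$ for $1$-quasi-every $x\in X$. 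Observe that $\psi(x)<\infty$ forces $\psi_j(x)\to 0$, and that $\psi_j(x)<1$ forces $x\notin G_j$ (as $\psi_j\ge 1$ on $G_j$). The essential input I would invoke is that functions in $N^{1,1}(X)$ are $1$-finely continuous at $1$-quasi-every point; for $p=1$ this should be taken from the fine-continuity theory in the literature (cf.\ \cite{L-FC,L-WC}). Now let $N_0$ be the union of $\{\psi=\infty\}$ together with, for each $j\in\N$, the set of points at which $\psi_j$ fails to be $1$-finely continuous; by countable subadditivity of $\capa_1$ we get $\capa_1(N_0)=0$.

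It remains to check $U\setminus N_0\subset\fint U$. Fix $x\in U\setminus N_0$ and choose $j$ so large that $\psi_j(x)<1$; then $x\notin G_j$, so $x\in U\setminus G_j\subset U\cup G_j$, and since $U\cup G_j$ is open there is $\rho>0$ with $B(x,\rho)\subset U\cup G_j$. Hence $B(x,\rho)\setminus U\subset G_j\subset\{\psi_j\ge 1\}$, i.e.\ $\{\psi_j<1\}\cap B(x,\rho)\subset U$. Since $\psi_j$ is $1$-finely continuous at $x$ and $\psi_j(x)<1$, the set $\{\psi_j<1\}$ is a $1$-fine neighborhood of $x$ and so contains a $1$-finely open set $V_0\ni x$. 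As the $1$-fine topology is a topology (\cite[Section~4]{L-FC}) and $B(x,\rho)$ is open, hence $1$-finely open, the set $V:=V_0\cap B(x,\rho)$ is $1$-finely open, contains $x$, and satisfies $V\subset\{\psi_j<1\}\cap B(x,\rho)\subset U$. Therefore $U$ is a $1$-fine neighborhood of $x$, i.e.\ $x\in\fint U$, as wanted; consequently $U\setminus\fint U\subset N_0$ and the proposition follows.

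The main obstacle is the fine-continuity input for $N^{1,1}(X)$ when $p=1$: in the range $1<p<\infty$ this is a consequence of the Cartan property, which is unavailable here, so one must rely on the more delicate $p=1$ arguments (built on the weak Cartan property). Everything else — the summation and truncation facts for the $\psi_j$, the implication $\psi(x)<\infty\Rightarrow\psi_j(x)\to 0$, and the stability of $1$-finely open sets under finite intersection — is routine. (One could instead try a direct contradiction argument via the fine Kellogg property of Theorem~\ref{thm:fine Kellogg property}: if $\capa_1(U\setminus\fint U)>0$, extract a point where $U\setminus\fint U$ is $1$-thick while $X\setminus U$ is locally contained in some $G_j$; but converting this into a contradiction appears to require the same fine-continuity machinery, so the approach above seems preferable.)
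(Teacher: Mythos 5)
The paper does not reproduce the proof (it cites \cite[Proposition 4.3]{L-Fed}), so a line-by-line comparison is not possible from this document alone; but your strategy --- encoding the quasiopenness defect in a single Newtonian function $\psi=\sum_j\psi_j$ and then invoking $1$-fine continuity of $N^{1,1}(X)$ functions at $1$-quasi-every point --- is the natural and, as far as I can tell, intended route. The fine-continuity input you flag is indeed the crux, and it is available from \cite{L-FC} for $p=1$, established there independently of the Cartan/Choquet machinery, so there is no circularity.

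One step deserves scrutiny: from ``$\psi_j$ is $1$-finely continuous at $x$ and $\psi_j(x)<1$'' you infer that ``$\{\psi_j<1\}$ is a $1$-fine neighborhood of $x$ and so contains a $1$-finely open set $V_0\ni x$.'' If the fine-continuity result is stated, as is typical in the $p=1$ literature, only in the thinness form ``$\{\psi_j\ge 1\}$ is $1$-thin at $x$,'' then passing from $1$-thinness of a set at $x$ to $x$ lying in the $1$-fine \emph{interior} of its complement is not automatic; it amounts to the $1$-fine closedness of the $1$-base, which in this paper is only obtained en route to Theorem~\ref{thm:Choquet property} via Proposition~\ref{prop:capacity of fine closure}. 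The gap is easy to close by bypassing $\fint U$ altogether: from $B(x,\rho)\setminus U\subset G_j\subset\{\psi_j\ge 1\}$ one sees directly that $X\setminus U$ is $1$-thin at every $x\in U\setminus N_0$; since $\capa_1(N_0)=0$, the set $N_0$ is $1$-thin at every point of $X$ by monotonicity and \eqref{eq:comparison of capacities}; hence $(X\setminus U)\cup N_0$ is $1$-thin at each $x\in U\setminus N_0$, so $V:=U\setminus N_0$ is itself $1$-finely open, and $U=V\cup(U\cap N_0)$ with $\mathcal H(U\cap N_0)=0$ is exactly the required decomposition. With this small repair your argument is complete and, I believe, essentially the one in \cite{L-Fed}.
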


\begin{lemma}\label{lem:quasiopen contain qo and finely open set}
Let $V_0\subset X$ be $1$-quasiopen and let
$x\in V_0\setminus b_1(X\setminus V_0)$, that is, $X\setminus V_0$ is $1$-thin at
$x$. Then there exists a $1$-finely open and $1$-quasiopen set $V$ such that
$x\in V\subset V_0$.
\end{lemma}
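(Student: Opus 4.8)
The plan is to combine Proposition \ref{prop:quasiopen is finely open} with the stability of $1$-quasiopen sets under $\mathcal H$-negligible modifications (Lemma \ref{lem:stability of quasiopen sets}), using the hypothesis that $X\setminus V_0$ is $1$-thin at $x$ to locate $x$ inside the $1$-finely open part. By Proposition \ref{prop:quasiopen is finely open} we may write $V_0=W\cup N$, where $W$ is $1$-finely open and $N$ is $\mathcal H$-negligible; by \eqref{eq:null sets of Hausdorff measure and capacity} we have $\capa_1(N)=0$. The first step is to observe that $X\setminus W=(X\setminus V_0)\cup(N\setminus V_0)\subset(X\setminus V_0)\cup N$, and since $\capa_1(N)=0$, a set of $1$-capacity zero is $1$-thin at every point (this follows from monotonicity of $\rcapa_1$, or see that adding a capacity-zero set cannot affect thinness because $\rcapa_1(A\cap B(x,r),B(x,2r))$ is unchanged). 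Hence $X\setminus W$ is $1$-thin at $x$ because it is contained in the union of two sets each $1$-thin at $x$, and a finite union of sets $1$-thin at $x$ is $1$-thin at $x$ (a standard property of the $1$-base; alternatively subadditivity of $\rcapa_1$). It may happen, though, that $x\notin W$ — the decomposition only guarantees $V_0=W\cup N$, and $x$ could lie in $N\setminus W$.

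To handle this, the second step is to enlarge $W$ slightly to include $x$. Since $X\setminus W$ is $1$-thin at $x$, the set $W\cup\{x\}$ is still $1$-finely open: at the point $x$ its complement $X\setminus(W\cup\{x\})\subset X\setminus W$ is $1$-thin, and at every other point $y\in W$ the complement $X\setminus(W\cup\{x\})$ is contained in $X\setminus W$, hence $1$-thin at $y$. So set $V:=W\cup\{x\}$. Then $V$ is $1$-finely open and $x\in V$. Moreover $V\subset W\cup N\cup\{x\}$; if $\capa_1(\{x\})=0$ then $V=W\cup\{x\}$ differs from $W$ by a capacity-zero (hence $\mathcal H$-negligible) set, and since $W=V_0\setminus N'$ for some $\mathcal H$-negligible adjustment — more directly, $W\subset V_0$ and $V=W\cup\{x\}\subset V_0$ since $x\in V_0$ by hypothesis. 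Thus in all cases $x\in V\subset V_0$.

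The third step is to check that $V$ is $1$-quasiopen. We know $V_0$ is $1$-quasiopen, and $V=V_0\setminus(V_0\setminus V)$. Here $V_0\setminus V\subset V_0\setminus W\subset N$, so $V_0\setminus V$ is $\mathcal H$-negligible; by Lemma \ref{lem:stability of quasiopen sets}, $V=V_0\setminus(V_0\setminus V)$ is $1$-quasiopen. This yields a $1$-finely open and $1$-quasiopen set $V$ with $x\in V\subset V_0$, as required. The main obstacle — and it is a minor one — is the bookkeeping around whether $x\in W$ in the Mazurkiewicz-type decomposition of Proposition \ref{prop:quasiopen is finely open}; the thinness hypothesis on $X\setminus V_0$ at $x$ is precisely what lets us repair this by adjoining the single point $x$ while keeping $1$-fine openness, and one must be careful to invoke Proposition \ref{prop:positive capacity implies thickness} is \emph{not} needed here since adding a single point only ever enlarges the fine-open set harmlessly at $x$ itself (where fine-openness is witnessed by the complement being thin, which we have arranged). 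One should also record explicitly the elementary fact used twice: a set of $\capa_1$-measure zero, and more generally a finite union of sets $1$-thin at a given point, is $1$-thin at that point; this is immediate from countable subadditivity of $\rcapa_1(\cdot,B(x,2r))$ and monotonicity.
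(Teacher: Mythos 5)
Your proof is correct and follows essentially the same route as the paper: decompose $V_0=W\cup N$ via Proposition \ref{prop:quasiopen is finely open}, set $V:=W\cup\{x\}$, and use the thinness hypothesis together with $\mathcal H(N)=0$ to verify $1$-fine openness at $x$. The one small streamlining is at the end: the paper splits into the cases $\capa_1(\{x\})>0$ (forcing $x\in W$) and $\capa_1(\{x\})=0$ (adding a negligible point to $W$), whereas you bypass the case distinction by writing $V=V_0\setminus(V_0\setminus V)$ with $V_0\setminus V\subset N$ negligible and applying Lemma \ref{lem:stability of quasiopen sets} once — a harmless and slightly cleaner reformulation of the same idea.
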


\begin{proof}
By Proposition \ref{prop:quasiopen is finely open}, $V_0=W\cup N$ where
$\mathcal H(N)=0$
and $W$ is $1$-finely open, and $W$ is also $1$-quasiopen by
Lemma \ref{lem:stability of quasiopen sets}. Let $V:=W\cup \{x\}$.
If $\capa_1(\{x\})>0$, necessarily $x\in W$ and so $V=W$  is $1$-finely open and $1$-quasiopen.
Otherwise $V$ is also $1$-quasiopen
by Lemma \ref{lem:stability of quasiopen sets}. It is $1$-finely open since
\[
B(x,r)\setminus V\subset B(x,r)\setminus W\subset (B(x,r)\setminus V_0)\cup (B(x,r)\cap N)
\]
for all $r>0$, and so
\begin{align*}
&\limsup_{r\to 0}r\frac{\rcapa_1(B(x,r)\setminus V,B(x,2r))}{\mu(B(x,r))}\\
&\qquad\qquad \le\limsup_{r\to 0}
r\frac{\rcapa_1((B(x,r)\setminus V_0)\cup (B(x,r)\cap N),B(x,2r))}{\mu(B(x,r))}\\
&\qquad\qquad =\limsup_{r\to 0}
r\frac{\rcapa_1(B(x,r)\setminus V_0,B(x,2r))}{\mu(B(x,r))}\quad\textrm{by }
\eqref{eq:null sets of Hausdorff measure and capacity}\textrm{ and }
\eqref{eq:comparison of capacities}\\
&\qquad\qquad =0
\end{align*}
since $x\notin b_1(X\setminus V_0)$.
\end{proof}

Recall the definitions of the lower and upper approximate limits
$u^{\wedge}$ and $u^{\vee}$ from \eqref{eq:lower approximate limit}
and \eqref{eq:upper approximate limit}.
From \cite[Theorem 1.1]{LaSh} we get the following result,
which was proved earlier in the Euclidean setting in \cite[Theorem 2.5]{CDLP}.

\begin{proposition}\label{prop:quasisemicontinuity of BV}
Let $u\in\BV(X)$ and $\eps>0$. Then there exists an open set $G\subset X$ such that
$\capa_1(G)<\eps$ and
$u^{\wedge}|_{X\setminus G}$ is real-valued lower semicontinuous.
\end{proposition}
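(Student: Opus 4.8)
The plan is to deduce quasi-lower-semicontinuity of $u^{\wedge}$ from the $1$-quasi-openness of all of its superlevel sets, which in turn will be built from measure-theoretic interiors of sets of finite perimeter. First I would dispose of the ``real-valued'' part: since $u\in\BV(X)$ we have $\|u\|_{\BV(X)}<\infty$, and a standard weak-type capacity estimate for $\BV$ functions --- obtained by testing $\capa_{\BV}$ with $\min(\max(u,0)/M,1)$ together with \eqref{eq:Newtonian and BV capacities are comparable} --- gives $\capa_1(\{u^{\vee}\ge M\})\le C\|u\|_{\BV(X)}/M$, and the same for $-u$; hence $\capa_1(\{u^{\wedge}=\pm\infty\})=0$, and by outer regularity of $\capa_1$ there is an open $G_{0}$ with $\capa_1(G_{0})<\eps/2$ containing $\{u^{\wedge}=\pm\infty\}$, so that $u^{\wedge}$ is real-valued on $X\setminus G_{0}$. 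Next, writing $I_F:=\{x\in X:\ \mu(B(x,r)\setminus F)/\mu(B(x,r))\to 0\text{ as }r\to 0\}$ for the measure-theoretic interior, the density definition \eqref{eq:lower approximate limit} yields for every $t\in\R$ the identity
\[
\{u^{\wedge}>t\}=\bigcup_{s>t} I_{\{u>s\}},
\]
and since $I_{\{u>s\}}$ decreases as $s$ increases, this is an increasing union over $s\downarrow t$. By the coarea formula $\int_{\R}P(\{u>s\},X)\,ds=\|Du\|(X)<\infty$, so $\{u>s\}$ has finite perimeter for a.e.\ $s$; replacing each level by a slightly smaller good one (which only enlarges the corresponding interior) I may take the union over a countable set of levels $s$ with $\{u>s\}$ of finite perimeter.

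The key ingredient is then: \emph{for a set $F\subset X$ of finite perimeter, $I_F$ is $1$-quasiopen}. This is the heart of the matter and is where I expect the real difficulty. It should follow from a boxing-type inequality for the $1$-capacity together with the fine structure of the measure-theoretic boundary $\partial^{*}F$ (which satisfies $\mathcal H(\partial^{*}F)<\infty$, comparable to $P(F,X)$): one must produce, for each $\eps'>0$, an open set absorbing the ``topological obstruction'' of $I_F$ --- i.e.\ enough of the density-one points that are limits of non-density-one points --- of $1$-capacity at most $\eps'$, passing from Hausdorff-content coverings to capacity via $\capa_1(B(x,r))\le C\mu(B(x,r))/r$. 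Granting this, I use the elementary stability fact that a countable union $\bigcup_k V_k$ of $1$-quasiopen sets is again $1$-quasiopen (take open $G_k$ with $\capa_1(G_k)<\eps 2^{-k}$ and $V_k\cup G_k$ open, and use $\bigcup_k G_k$); combined with the previous step this shows that $\{u^{\wedge}>t\}$ is $1$-quasiopen for every $t\in\R$.

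Finally I would assemble the conclusion. For each rational $t$ choose an open $G_{t}$ with $\capa_1(G_{t})<\eps\,2^{-j(t)}$, the exponents arranged so that $\sum_{t\in\Q}\capa_1(G_{t})<\eps/2$, and with $\{u^{\wedge}>t\}\cup G_{t}$ open. Put $G:=G_{0}\cup\bigcup_{t\in\Q}G_{t}$, so $\capa_1(G)<\eps$. For any real $s$,
\[
\{x\in X\setminus G:\ u^{\wedge}(x)>s\}
=\bigcup_{t\in\Q,\,t>s}\bigl((\{u^{\wedge}>t\}\cup G_{t})\cap(X\setminus G)\bigr),
\]
a union of relatively open subsets of $X\setminus G$ (each $\{u^{\wedge}>t\}\cup G_{t}$ is open and $G_{t}\subset G$), hence relatively open; so $u^{\wedge}|_{X\setminus G}$ is lower semicontinuous, and it is real-valued there since $X\setminus G\subset X\setminus G_{0}$. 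The main obstacle, as noted, is the quasi-openness of measure-theoretic interiors of sets of finite perimeter --- in particular controlling the $1$-capacity of the obstruction set for a set of arbitrarily large perimeter, which forces one to work locally and to use the rectifiability and structure of $\partial^{*}F$ rather than a crude global covering --- together with the bookkeeping of exceptional open sets across countably many levels.
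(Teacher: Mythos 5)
The paper does not prove this Proposition; it is quoted from \cite[Theorem 1.1]{LaSh}, with \cite[Theorem 2.5]{CDLP} cited as the Euclidean antecedent. Your outline is a plausible reconstruction of the kind of argument used there: reduce via the coarea formula to superlevel sets of finite perimeter, invoke quasiopenness of their measure-theoretic interiors, and assemble. The bookkeeping parts are all correct --- the identity $\{u^{\wedge}>t\}=\bigcup_{s>t}I_{\{u>s\}}$, the passage to a countable sequence of good levels $s_n\downarrow t$, the stability of $1$-quasiopenness under countable unions, the weak-type capacity bound giving $\capa_1(\{u^{\wedge}=\pm\infty\})=0$, and the final union over rational thresholds restricted to $X\setminus G$ all work as described.

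The genuine gap is the step you yourself flag: that $I_F$ is $1$-quasiopen whenever $P(F,X)<\infty$. This is not a supporting detail but essentially the whole content of \cite[Theorem 1.1]{LaSh}, so the proposal in effect reduces the Proposition to itself. Moreover, the sketch offered for it is not on solid ground as stated: covering the measure-theoretic boundary $\partial^*F$ by balls and estimating $\capa_1(B)\le C\mu(B)/r$ only controls the capacity of a neighborhood of $\partial^*F$, and since $\mathcal H(\partial^*F)$ is a fixed positive number comparable to $P(F,X)$ this does not by itself produce a \emph{small}-capacity open set; nor does covering $\partial^*F$ suffice to make $I_F\cup G$ open, because density-one points of $F$ can be accumulation points of density-zero points (think of a half-space with countably many tiny disjoint balls removed, their radii summable, accumulating at a density-one point). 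The actual argument needs the summability of the tail of the perimeter to swallow such obstructions into a small-capacity set, and in \cite{LaSh} this is handled by approximating the characteristic function of $F$ by discrete convolutions, which lie in $N^{1,1}$ and are therefore quasicontinuous, and then passing to the limit, rather than by a crude covering of $\partial^*F$. So the structure you lay out is the right one, but the heart of the proof is missing.
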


It is perhaps a curious fact that only now we need to talk about minimizers
for the first time; recall the definitions of $1$-minimizers and $1$-superminimizers from
Definition \ref{def:least gradient}.

\begin{theorem}[{\cite[Theorem 3.16]{L-WC}}]\label{thm:superminimizers are lsc}
Let $u$ be a $1$-superminimizer in an open set $\Om\subset X$.
Then $u^{\wedge}\colon\Om\to (-\infty,\infty]$ is lower semicontinuous.
\end{theorem}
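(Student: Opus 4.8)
The plan is to exploit superminimality through a ``filling--in'' comparison, with the relative isoperimetric inequality coming from the $(1,1)$-Poincar\'e inequality as the quantitative engine; Proposition \ref{prop:quasisemicontinuity of BV} enters, if one wishes, only to localize the argument to a capacitarily thin set.

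First, a reduction. Since $\|D(u-c)\|=\|Du\|$ and $(u-c)+\varphi=(u+\varphi)-c$, the function $u-c$ is again a $1$-superminimizer in $\Om$ for every $c\in\R$, with $(u-c)^{\wedge}=u^{\wedge}-c$. So it suffices to prove: if $x_0\in\Om$ and $u^{\wedge}(x_0)>0$, then $u^{\wedge}\ge 0$ on some ball $B(x_0,R)$. Indeed, applying this to $u-c$ for every $c<u^{\wedge}(x_0)$ gives $\liminf_{x\to x_0}u^{\wedge}(x)\ge u^{\wedge}(x_0)$, i.e.\ lower semicontinuity (and in particular $u^{\wedge}>-\infty$ on $\Om$; the finiteness also follows from the analogous upper bound for the subminimizer $-u$). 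A further translation lets us assume $\liminf_{x\to x_0}u^{\wedge}(x)<0$. Suppose the conclusion fails: there are $x_j\to x_0$ and $\eps>0$ with $u^{\wedge}(x_j)<-\eps$ for all $j$. Unwinding \eqref{eq:lower approximate limit}, the set $\{u<-\eps\}$ has strictly positive upper density at each $x_j$, while $u^{\wedge}(x_0)>0$ forces $\{u<0\}$ to have density zero at $x_0$.

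The filling--in step. Fix a small $R$ with $\overline{B(x_0,R)}\subset\Om$, chosen among the ``good'' radii so that neither $\|Du\|$ nor the perimeter measures of the superlevel sets of $u$ charge $\partial B(x_0,R)$, and so that the trace of $u$ is $\mathcal H$-integrable there. Take the competitor $\tilde u:=\max(u,-\eps/2)$ on $B(x_0,R)$ and $\tilde u:=u$ elsewhere; then $\varphi:=\tilde u-u\ge 0$ lies in $\BV_c(\Om)$ with $K:=\supp\varphi=\overline{\{u<-\eps/2\}\cap B(x_0,R)}$. Decomposing $\|Du\|(K)\le\|D\tilde u\|(K)$ (from \eqref{eq:definition of 1minimizer}) by the coarea formula and cancelling the levels $t\ge-\eps/2$, where $\{\tilde u>t\}=\{u>t\}$, reduces matters to $\int_{-\infty}^{-\eps/2}P(\{u>t\},K)\,dt\le\int_{-\infty}^{-\eps/2}P(\{\tilde u>t\},K)\,dt$. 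For $t<-\eps/2$ the set $\{\tilde u>t\}$ is full inside $B(x_0,R)$, so $P(\{\tilde u>t\},K)$ is carried by $\partial B(x_0,R)$; and since $\{u\le t\}\subset\{u<-\eps/2\}$ for such $t$, the essential boundaries of $\{u>t\}$ inside $B(x_0,R)$ lie in $K$, so the structure theorem for sets of finite perimeter gives $P(\{u>t\},B(x_0,R))\le P(\{u>t\},K)$. Hence
\[
\int_{-\infty}^{-\eps/2}P(\{u>t\},B(x_0,R))\,dt\ \le\ \int_{-\infty}^{-\eps/2}P(\{\tilde u>t\},\partial B(x_0,R))\,dt\ =:\ \mathrm{Err}(R),
\]
where, by absolute continuity and a coarea/Fubini estimate, $\mathrm{Err}(R)$ can be made small along good radii precisely because $\{u<0\}$ has density zero at $x_0$.

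The opposite bound. Because $\{u<-\eps\}$ has positive upper density at $x_j\to x_0$ whereas $\{u<0\}$ has density zero at $x_0$, a standard measure--density argument using the doubling property produces a ball $B^*=B(z,\rho)\subset B(x_0,R)$ in which $\{u\le t\}$ has $\mu$-density bounded away from $0$ and $1$ by structural constants, simultaneously for all $t$ in a fixed subinterval of $(-\eps,-\eps/2)$; the relative isoperimetric inequality then yields $P(\{u>t\},B(x_0,R))\gtrsim\mu(B^*)/\rho$ for such $t$, and so $\int_{-\infty}^{-\eps/2}P(\{u>t\},B(x_0,R))\,dt\gtrsim\mu(B^*)/\rho>0$. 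Playing this definite lower bound against $\mathrm{Err}(R)$ gives the contradiction. One may additionally invoke Proposition \ref{prop:quasisemicontinuity of BV} at the outset to know that $u^{\wedge}$ is already lower semicontinuous off a set of small capacity, which confines the whole discussion to a capacitarily thin exceptional set. The main obstacle is exactly the interplay of scales in this last step: one must choose the radius $R$, the truncation level, and the ball $B^*$ so that the isoperimetric lower bound genuinely dominates the truncation error $\mathrm{Err}(R)$ across $\partial B(x_0,R)$ --- the delicate point being that this error involves the trace of $u$ on the sphere, and controlling it uniformly (without circularity) leans on the superminimality that is being exploited; the degenerate cases $\|Du\|(\{x_0\})>0$ and $\capa_1(\{x_0\})>0$ need separate, easier, treatment.
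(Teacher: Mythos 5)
The paper does not prove this result: it is imported verbatim from \cite[Theorem 3.16]{L-WC} and used as a black box, so there is no in-text argument to compare against. Judged on its own, your proposal contains a genuine gap, one you in fact flag yourself in the last paragraph. The single ``filling--in'' comparison does yield a Caccioppoli-type inequality bounding $\int_{-\infty}^{-\eps/2}P(\{u>t\},B(x_0,R))\,dt$ by trace data on $\partial B(x_0,R)$, but the two quantities you then try to play off each other live at incompatible scales: the isoperimetric lower bound $\mu(B^*)/\rho$ is tied to a ball $B^*=B(z,\rho)\subset B(x_0,R)$ whose radius $\rho$ is produced by an intermediate-value density argument and may be arbitrarily small compared with $R$, whereas $\mathrm{Err}(R)$ is a quantity at scale $R$. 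Nothing in the doubling property or in the density hypotheses forces $\mu(B^*)/\rho$ to dominate $\mathrm{Err}(R)$. Moreover $\mathrm{Err}(R)$ is an integral over the unbounded interval $(-\infty,-\eps/2)$ of perimeter contributions carried by the sphere; neither its finiteness nor its smallness along good radii follows from the single fact that $\{u<0\}$ has volume density zero at $x_0$ --- volume density at a point does not control sphere traces, and the Fubini/coarea selection needed to do so is exactly where the circularity you mention lives.

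The missing idea is iteration. What one comparison with $\max(u,-\eps/2)$ gives is precisely one De Giorgi step: the perimeter of a sublevel set in a ball controlled by the measure of a slightly lower sublevel set in a slightly larger ball. The natural way to turn this into lower semicontinuity of $u^{\wedge}$ (and, from its title and role, very plausibly the route taken in \cite{L-WC}) is to iterate the Caccioppoli--isoperimetric pair over a geometric sequence of radii $r_k=2^{-k}R$ and a decreasing sequence of levels, yielding a weak-Harnack/De Giorgi statement of the form: if $\mu(\{u<c\}\cap B(x_0,r))/\mu(B(x_0,r))$ is below a structural threshold, then $u\ge c-\eps$ a.e.\ in $B(x_0,r/2)$. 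Applying this with $c$ slightly below $u^{\wedge}(x_0)$ immediately gives $u^{\wedge}\ge c-\eps$ on a neighbourhood, hence lower semicontinuity, with no need to tune a single radius so that two independent quantities happen to align. Your reduction step, the superminimizer test, and the coarea bookkeeping are all ingredients of that argument; what is absent is the recursion that makes them quantitative, and without it the final contradiction is asserted rather than derived. Also, Proposition~\ref{prop:quasisemicontinuity of BV} is a red herring here: it concerns global $\BV$ functions and quasicontinuity off small-capacity sets, which is strictly weaker than the everywhere lower semicontinuity being claimed, so it cannot be used to ``localize to a thin exceptional set'' in a way that helps.
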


We have the following weak Cartan property.

\begin{theorem}[{\cite[Theorem 5.3]{L-WC}}]\label{thm:weak Cartan property in text}
	Let $A\subset X$ and let $x\in X\setminus A$ be such that $A$
	is $1$-thin at $x$.
	Then there exist $R>0$ and $E_0,E_1\subset X$ such that $\ch_{E_0},\ch_{E_1}\in\BV(X)$,
	$\ch_{E_0}$ and $\ch_{E_1}$ are $1$-superminimizers in $B(x,R)$,
	$\max\{\ch_{E_0}^{\wedge},\ch_{E_1}^{\wedge}\}=1$ in $A\cap B(x,R)$,
	$\{\max\{\ch_{E_0}^{\vee},\ch_{E_1}^{\vee}\}>0\}$ is $1$-thin at $x$,
	and
	\[
	\lim_{r\to 0}r\frac{P(E_0,B(x,r))}{\mu(B(x,r))}=0,\qquad
	\lim_{r\to 0}r\frac{P(E_1,B(x,r))}{\mu(B(x,r))}=0.
	\]
\end{theorem}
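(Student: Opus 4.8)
The plan is to construct $E_0$ and $E_1$ explicitly, one dyadic scale at a time, out of the variational $1$-capacity, and then to turn them into $1$-superminimizers by solving obstacle problems; the splitting into two sets is forced by this last step.

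Fix a rapidly decreasing sequence $\rho_1>\rho_2>\cdots$ with $\rho_j\to 0$ and $\rho_{j+1}=\rho_j/2$, put $\widetilde A_j:=\overline B(x,\rho_j)\setminus B(x,\rho_{j+1})$ so that the $\widetilde A_j$ exhaust $B(x,\rho_1)\setminus\{x\}$, and fix a dilation factor giving ``fat annuli'' $A_j^+\supset\widetilde A_j$ chosen so that $A_j^+$ and $A_{j+2}^+$ are separated by a spherical shell of definite logarithmic width. Since $A$ is $1$-thin at $x$, the quantity $r\,\rcapa_1(A\cap B(x,r),B(x,2r))/\mu(B(x,r))$ tends to $0$; using this together with the comparisons \eqref{eq:comparison of capacities}--\eqref{eq:Newtonian and BV capacities are comparable}, I attach to each $\widetilde A_j$ a number $\eps_j\to 0$ and, after first thinning the sequence $(\rho_j)$, arrange that $\sum_j\eps_j\mu(B(x,\rho_j))/\rho_j<\infty$ with every tail of this series comparable to its leading term. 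For each $j$ I take a near-minimizer $u_j$ for the variational $1$-capacity of $A\cap\widetilde A_j$ relative to a fat annulus about it, so that $u_j=0$ off that annulus, $u_j\ge 1$ on $A\cap\widetilde A_j$, and $\int_X g_{u_j}\,d\mu\le C\eps_j\mu(B(x,\rho_j))/\rho_j$; multiplying $u_j$ by a Lipschitz cutoff adapted to $A_j^+$ and passing via the coarea formula to a suitable superlevel set, I obtain a set of finite perimeter $F_j\subset A_j^+$ with $A\cap\widetilde A_j\subset F_j$, with $\rcapa_1(F_j,A_j^+)+P(F_j,X)\le C\eps_j\mu(B(x,\rho_j))/\rho_j$, and (by the relative isoperimetric inequality, available from the Poincar\'e inequality) with $\mu(F_j)\le C\eps_j\mu(B(x,\rho_j))$. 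The $1$-null sets on which the $u_j$ fail to attain their values as approximate limits --- the only obstruction to having $A\cap\widetilde A_j$ in the topological interior of $F_j$ --- are captured by adding to one of the two sets further confined pieces of arbitrarily small perimeter and measure; I suppress this bookkeeping.

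Put $E_0':=\bigcup_{j\text{ even}}F_j$ and $E_1':=\bigcup_{j\text{ odd}}F_j$. Within one parity the $F_j$ lie in pairwise disjoint fat annuli, so $P(E_i',X)\le\sum_jP(F_j,X)<\infty$ and $\mu(E_i')<\infty$, hence $\ch_{E_0'},\ch_{E_1'}\in\BV(X)$; every point of $A\cap B(x,\rho_1)$ lies in some $\widetilde A_j$ and hence (modulo the suppressed null sets) in the topological interior of $E_0'$ or of $E_1'$, so $\max\{\ch_{E_0'}^\wedge,\ch_{E_1'}^\wedge\}=1$ on $A\cap B(x,\rho_1)$. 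A ball $B(x,r)$ with $r\asymp\rho_m$ meets only the $F_j$ with $j\gtrsim m$, so the tail estimates give $P(E_i',B(x,r))\le C\eps_m\mu(B(x,\rho_m))/\rho_m$ and $\mu(E_i'\cap B(x,r))\le C\eps_m\mu(B(x,\rho_m))$, and subadditivity of the variational capacity over the $F_j$ gives $r\,\rcapa_1(E_i'\cap B(x,r),B(x,2r))/\mu(B(x,r))\le C\eps_m$. Thus each $E_i'$ has density $0$ at $x$, is $1$-thin at $x$, and satisfies $r\,P(E_i',B(x,r))/\mu(B(x,r))\to 0$.

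Finally, fix $j_0$ so large that $\eps_j$ is small for all $j\ge j_0$, set $R:=\rho_{j_0}$, and for $i=0,1$ let $E_i$ solve the obstacle problem of minimizing $P(E,B(x,R))$ over measurable $E$ with $E\supset E_i'$ and $E\setminus B(x,R)=E_i'\setminus B(x,R)$ (existence by $\BV$-compactness). Then $\ch_{E_i}$ is a $1$-superminimizer in $B(x,R)$, because for $\varphi\ge 0$ in $\BV_c(B(x,R))$ the superlevel sets of $\ch_{E_i}+\varphi$ relevant in the coarea formula still contain $E_i'$ and are admissible competitors. Since $E_i\supset E_i'$, every point of $A\cap B(x,R)$ that lies in the interior of $E_i'$ remains a point of density $1$ of $E_i$, so $\max\{\ch_{E_0}^\wedge,\ch_{E_1}^\wedge\}=1$ on $A\cap B(x,R)$ (here Theorem \ref{thm:superminimizers are lsc} is what legitimizes working with genuine pointwise values); and $\ch_{E_i}=\ch_{E_i'}$ off $B(x,R)$ with $P(E_i,B(x,R))\le P(E_i',B(x,R))$, so $\ch_{E_i}\in\BV(X)$. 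The decisive and hardest point is that $E_i$ stays sparse at $x$: since $E_i'$ is empty on the shells separating $A_j^+$ from $A_{j+2}^+$, and ``bridging'' such a shell would cost perimeter of order $\mu(B(x,\rho_j))/\rho_j$, which for $j\ge j_0$ exceeds the whole remaining budget $\sum_{k\ge j}\eps_k\mu(B(x,\rho_k))/\rho_k$, a minimizer cannot connect consecutive same-parity fat annuli; near $x$ it is therefore essentially a disjoint union of localized obstacle solutions, one in each $A_j^+$, each still of perimeter $\le C\eps_j\mu(B(x,\rho_j))/\rho_j$ and hence of measure $\le C\eps_j\mu(B(x,\rho_j))$. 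Summing over $j$ as in the previous paragraph then gives density $0$ at $x$, the decay $r\,P(E_i,B(x,r))/\mu(B(x,r))\to 0$, and --- passing from perimeter to variational capacity by \eqref{eq:comparison of capacities} --- the $1$-thinness of $\{\max\{\ch_{E_0}^\vee,\ch_{E_1}^\vee\}>0\}$ at $x$. Making this ``no filling in'' rigorous, by combining the comparison properties of $\BV$ obstacle minimizers with the relative isoperimetric inequality and the built-in geometric separation, is the main obstacle of the proof; it is also precisely why two sets, not one, must be produced, since with the single obstacle $\bigcup_jF_j$ the minimizer could fill across all scales and acquire positive density at $x$, whereas the even/odd splitting inserts the perimeter barriers that rule this out.
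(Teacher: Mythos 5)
The theorem you are proving is not actually proved in the paper at hand: it is stated with an explicit citation to \cite[Theorem~5.3]{L-WC} and used as a black box. So there is no internal proof to compare against; the relevant comparison is with the argument in \cite{L-WC}. With that caveat, your architecture --- dyadic annuli around $x$, capacitary near-minimizers in each annulus made into finite-perimeter sets $F_j$ via coarea, an even/odd split into $E_0'$, $E_1'$ to create geometric separation, and then a perimeter obstacle problem in a ball $B(x,R)$ to upgrade each $E_i'$ to a $1$-superminimizer --- is indeed the right shape of argument, and the reason for two sets is as you say.

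Two places where the sketch is genuinely shaky, beyond the bookkeeping you explicitly suppress. First, the ``null set'' issue you raise for getting $A\cap\widetilde A_j$ into the interior of $F_j$ is best dissolved rather than patched: since $\capa_1$ is an outer capacity, you can replace $A\cap\widetilde A_j$ by an open neighbourhood of nearly the same $\rcapa_1$ before choosing $u_j$; then $\{u_j>1/2\}$ contains $A\cap\widetilde A_j$ in its \emph{topological} interior automatically, and no extra pieces need to be glued on. Second, and more seriously, your heuristic for the decisive localization step --- that ``bridging a shell would cost perimeter of order $\mu(B(x,\rho_j))/\rho_j$'' --- is not correct as stated: a thin tube crossing a shell has arbitrarily small perimeter, so mere connection across scales is cheap and is not what one must exclude. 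What must be excluded is the minimizer $E_i$ acquiring positive upper density (equivalently, containing essentially a ball) at $x$; and the way this is actually done is not by a ``bridging cost'' bound but by a De~Giorgi cut-and-paste comparison: test the minimizer against $(E_i\setminus B(x,r))\cup(E_i'\cap B(x,r))$, estimate the resulting spherical boundary term for a.e.\ $r$ using coarea, and run a relative-isoperimetric/Gronwall iteration in $r$ to propagate the smallness of $\mu(E_i\cap B(x,r))$ and $P(E_i,B(x,r))$ down to $r\to 0$. This is the real content of the localization and the step your sketch does not supply. A smaller point: the passage from the perimeter decay of $E_i$ to $1$-thinness of $\{\max\{\ch_{E_0}^\vee,\ch_{E_1}^\vee\}>0\}$ needs a boxing/coarea-type inequality to convert perimeter into capacity; \eqref{eq:comparison of capacities} alone (which compares $\capa_1$ with $\rcapa_1$) does not do this.
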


Now we can prove a result that is an analog of the existence of \emph{p-strict subsets} for $p>1$,
see \cite[Lemma 3.3]{BBL-SS}. In fact, later we will only need the existence of the sets $V$
given in the proposition below, and not the functions $v$.

\begin{proposition}\label{prop:strict subsets}
	Let $U\subset X$ be $1$-finely open and let $x\in U$. Then there exists
	a $1$-finely open and $1$-quasiopen set $V$ such that $x\in V\subset U$,
	and a function $w\in\BV(X)$ such that $0\le w\le 1$ on $X$,
	$w^{\wedge}=1$ on $V$, and $\supp w\Subset U$.
\end{proposition}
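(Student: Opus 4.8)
The plan is to build $V$ by combining the weak Cartan property (Theorem~\ref{thm:weak Cartan property in text}) with the quasi-Lindel\"of principle and the previous structural lemmas, and then to produce $w$ from the $\BV$ functions $\ch_{E_0},\ch_{E_1}$ supplied by the weak Cartan property. Since $U$ is $1$-finely open, the set $A:=X\setminus U$ is $1$-thin at $x$, so Theorem~\ref{thm:weak Cartan property in text} gives $R>0$ and sets $E_0,E_1$ with $\ch_{E_0},\ch_{E_1}\in\BV(X)$ that are $1$-superminimizers in $B(x,R)$, with $\max\{\ch_{E_0}^{\wedge},\ch_{E_1}^{\wedge}\}=1$ on $A\cap B(x,R)$, with $G_0:=\{\max\{\ch_{E_0}^{\vee},\ch_{E_1}^{\vee}\}>0\}$ being $1$-thin at $x$, and with the perimeter density decay at $x$. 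First I would pass to a smaller radius and set $V_0:=B(x,r)\setminus \overline{G_0}^{\,1}$ for suitable $r<R$; one checks using that $G_0$ is $1$-thin at $x$ (so $x\notin \overline{G_0}^{\,1}$) together with $1$-fine openness of $B(x,r)\setminus \overline{G_0}^{\,1}$ that $x\in V_0$, and since $\ch_{E_0}^{\wedge}=\ch_{E_1}^{\wedge}=0$ quasi-everywhere outside $G_0$ while $\max\{\ch_{E_0}^{\wedge},\ch_{E_1}^{\wedge}\}=1$ on $A\cap B(x,R)$, we get $V_0\subset U$ up to an $\mathcal H$-negligible set; absorbing that negligible set into $A$ and intersecting with $U$ (using Lemma~\ref{lem:stability of quasiopen sets}) yields a $1$-quasiopen $V_0$ with $x\in V_0\subset U$ once we know $V_0$ is $1$-quasiopen.

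The next step is to upgrade to $1$-quasiopenness: by Theorem~\ref{thm:superminimizers are lsc} the functions $\ch_{E_0}^{\wedge}$ and $\ch_{E_1}^{\wedge}$ are lower semicontinuous in $B(x,R)$, so $\{\ch_{E_0}^{\wedge}=1\}$ and $\{\ch_{E_1}^{\wedge}=1\}$ are (relatively) open there, hence $\{\max\{\ch_{E_0}^{\wedge},\ch_{E_1}^{\wedge}\}=1\}$ is open in $B(x,R)$; its complement within $B(x,R)$ contains $A\cap B(x,R)$ only on a negligible set, and more to the point $B(x,R)\setminus\{\max\{\ch_{E_0}^{\wedge},\ch_{E_1}^{\wedge}\}=1\}$ is, up to an $\mathcal H$-negligible set, a subset of $U$, so it serves as (a large part of) $V$. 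Since open sets are $1$-quasiopen and $\mathcal H$-negligible modifications preserve $1$-quasiopenness by Lemma~\ref{lem:stability of quasiopen sets}, we obtain a $1$-quasiopen set sitting between a fine neighbourhood of $x$ and $U$; applying Lemma~\ref{lem:quasiopen contain qo and finely open set} to this set (its complement is $1$-thin at $x$ because $G_0$ is) produces the desired $1$-finely open and $1$-quasiopen $V$ with $x\in V\subset U$.

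Finally, to build $w$, I would use a Lipschitz (or $\BV$) cutoff $\eta$ with $\eta=1$ on a fine neighbourhood of $x$, $0\le\eta\le1$, and $\supp\eta\Subset B(x,R)\cap U'$ for an open $U'$ obtained from the quasiopen structure, and set $w:=\eta\cdot\min\{\ch_{E_0}+\ch_{E_1},1\}$ or rather $w:=\eta\cdot\max\{\ch_{E_0},\ch_{E_1}\}$ truncated to $[0,1]$; then $w\in\BV(X)$ because $\ch_{E_0},\ch_{E_1}\in\BV(X)$ and products with Lipschitz functions preserve $\BV$, $0\le w\le1$ by construction, $w^{\wedge}=1$ on $V$ since $\max\{\ch_{E_0}^{\wedge},\ch_{E_1}^{\wedge}\}=1$ there and $\eta=1$ there, and $\supp w\Subset U$ by the choice of cutoff. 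The main obstacle I anticipate is the bookkeeping needed to simultaneously arrange that $V$ is genuinely $1$-finely open (not just quasiopen), that $V\subset U$ rather than merely $V\subset U$ up to null sets, and that the support of $w$ is compactly contained in $U$ — this requires carefully choosing the radius $r$, using outer regularity of $\capa_1$ to replace quasiopen pieces by open supersets, and invoking $1$-fine openness of $U$ to push $\supp w$ strictly inside; the perimeter-decay conclusions of Theorem~\ref{thm:weak Cartan property in text}, while not needed for this proposition's statement, may be what guarantees the cutoff can be taken with controlled total variation if a quantitative version is wanted, but for the stated qualitative claim they can be ignored.
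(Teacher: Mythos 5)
Your overall architecture matches the paper's: invoke the weak Cartan property with $A:=X\setminus U$, use lower semicontinuity of the superminimizers (Theorem~\ref{thm:superminimizers are lsc}), build a $1$-quasiopen candidate $V_0$ containing $x$ whose complement is $1$-thin at $x$, and pass through Lemma~\ref{lem:quasiopen contain qo and finely open set}. But there are two substantive errors, both stemming from an orientation mix-up between $U$ and its complement.

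First, the function $w$. The weak Cartan property gives $\max\{\ch_{E_0}^{\wedge},\ch_{E_1}^{\wedge}\}=1$ on $A\cap B(x,R)=(X\setminus U)\cap B(x,R)$, \emph{not} on any fine neighbourhood of $x$; indeed $\{\max\{\ch_{E_0}^{\vee},\ch_{E_1}^{\vee}\}>0\}$ is $1$-thin at $x$, so near $x$ (and on any reasonable $V$) the maximum vanishes. Hence your $w:=\eta\cdot\max\{\ch_{E_0},\ch_{E_1}\}$ satisfies $w^{\wedge}=0$, not $1$, on $V$, and your justification ``since $\max\{\ch_{E_0}^{\wedge},\ch_{E_1}^{\wedge}\}=1$ there'' is false. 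The fix is to flip the function: the paper sets $u:=\max\{\ch_{E_0}^{\wedge},\ch_{E_1}^{\wedge}\}$ and works with $v:=\eta(1-u)$, which is $1$ near $x$, vanishes on $X\setminus U$ (this is also what gives $\supp w\Subset U$, which your cutoff alone does not ensure), and is upper semicontinuous.

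Second, quasiopenness of $V_0$. You argue that $\{\max\{\ch_{E_0}^{\wedge},\ch_{E_1}^{\wedge}\}=1\}$ is relatively open in $B(x,R)$ by lower semicontinuity (correct, since these functions are $\{0,1\}$-valued), and then propose $V':=B(x,R)\setminus\{\max\{\ch_{E_0}^{\wedge},\ch_{E_1}^{\wedge}\}=1\}$ as the candidate. But this set is relatively \emph{closed} in $B(x,R)$, not open, and ``open plus $\mathcal H$-negligible modification'' does not apply; you have not justified that $V'$ is $1$-quasiopen. (Your first variant $B(x,r)\setminus\overline{G_0}^{1}$ is $1$-finely open but you concede the quasiopenness gap yourself.) The ingredient that does the work in the paper and is absent from your argument is the quasi-semicontinuity of $\BV$ functions, Proposition~\ref{prop:quasisemicontinuity of BV}: one takes $V_0:=\{v^{\wedge}>1/2\}$, which is $1$-quasiopen precisely because $v\in\BV(X)$ and $v^{\wedge}$ is quasi lower semicontinuous on $X$ (not merely lower semicontinuous on $B(x,R)$). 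Then $x\notin b_1(X\setminus V_0)$ follows from the thinness of $\{\max\{\ch_{E_0}^{\vee},\ch_{E_1}^{\vee}\}>0\}$ at $x$, and Lemma~\ref{lem:quasiopen contain qo and finely open set} finishes as you intended. Once you reorient via $v=\eta(1-u)$ and invoke Proposition~\ref{prop:quasisemicontinuity of BV}, the remaining bookkeeping (e.g.\ $\supp w\subset\{v\ge 1/4\}\Subset U$ by upper semicontinuity of $v$) falls into place.
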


\begin{proof}
The set $X\setminus U$ is $1$-thin at $x$.
Take $R>0$ and $E_0,E_1\subset X$ as given by Theorem \ref{thm:weak Cartan property in text}
with the choice $A=X\setminus U$.
Let $u:=\max\{\ch_{E_0}^{\wedge},\ch_{E_1}^{\wedge}\}\in\BV(X)$
(here we understand $u$ to be pointwise defined).
Now $0\le u\le 1$, $u$ is lower semicontinuous in $B(x,R)$ by
Theorem \ref{thm:superminimizers are lsc},
$u=1$ on $B(x,R)\setminus U$, and $u^{\vee}(x)=0$
since the set
$\{\max\{\ch_{E_0}^{\wedge},\ch_{E_1}^{\wedge}\}>0\}
\subset \{\max\{\ch_{E_0}^{\vee},\ch_{E_1}^{\vee}\}>0\}$ is $1$-thin at $x$,
and so it also has zero measure density at $x$, see \cite[Lemma 3.1]{L-Fed}.
Moreover,
$\{u^{\vee}>0\}
\subset \{\max\{\ch_{E_0}^{\vee},\ch_{E_1}^{\vee}\}>0\}$
and so
\begin{equation}\label{eq:thinness in Cartan property}
\lim_{r\to 0}r\frac{\rcapa_1(\{u^{\vee}>0\}\cap B(x,r),B(x,2r))}{\mu(B(x,r))}=0.
\end{equation}
Let $\eta\in \Lip_c(B(x,R))$ such that $0\le \eta\le 1$ on $X$ and $\eta=1$ on $B(x,R/2)$.
Let $v:=\eta(1-u)\in\BV(X)$ (still understood to be pointwise defined).
Then $0\le v\le 1$, $v$ is upper semicontinuous on $X$,
$v=0$ on $X\setminus U$, and $v^{\wedge}(x)=1$.
Moreover, by \eqref{eq:thinness in Cartan property},
\begin{equation}\label{eq:thinness in Cartan property consequence}
\lim_{r\to 0}r\frac{\rcapa_1(\{v^{\wedge}<1\}\cap B(x,r),B(x,2r))}{\mu(B(x,r))}=0.
\end{equation}
The set $V_0:=\{ v^{\wedge}>1/2\}$ contains $x$ and
is $1$-quasiopen by Proposition \ref{prop:quasisemicontinuity of BV}.
Moreover, $x\notin b_1(X\setminus V_0)$ by \eqref{eq:thinness in Cartan property consequence}.
By Lemma \ref{lem:quasiopen contain qo and finely open set} we find a $1$-finely
open and $1$-quasiopen set $V$ such that $x\in V\subset V_0$.
Let
\[
w(\cdot):=\min\left\{1,\left(4v(\cdot)-1\right)_+\right\}\in\BV(X)
\]
(now understood in the usual sense of a $\BV$ function, i.e. as a $\mu$-equivalence class).
Then $0\le w\le 1$ and $w^{\wedge}=1$ on $V$ (since it is a subset of $V_0$).
Moreover, $\supp w\subset \{v\ge 1/4\}$
by the upper semicontinuity of $v$. Thus $\supp w\Subset U$, and this
also guarantees that $V\subset U$.
\end{proof}

\begin{remark}
The above proof would be somewhat more straightforward if we knew
that the set $V_0$ itself was $1$-finely open; then we would not need
Lemma \ref{lem:quasiopen contain qo and finely open set}.
This would be the case if the
superminimizer functions $\ch_{E_0}^{\vee}$ and $\ch_{E_1}^{\vee}$ were
upper semicontinuous with respect to the $1$-fine topology,
but in general they are not, see \cite[Example 5.14]{L-WC}.
This is in contrast with the case $p>1$, where the $p$-fine topology
makes all $p$-superharmonic functions continuous. However, the fact that
$\{\max\{\ch_{E_0}^{\vee},\ch_{E_1}^{\vee}\}>0\}$ is $1$-thin at $x$ means that
$\ch_{E_0}^{\vee}$ and $\ch_{E_1}^{\vee}$ are $1$-finely continuous
at the point $x$, and this was enough for the proof to run through.
\end{remark}

\begin{theorem}\label{thm:finely open is quasiopen}
Every $1$-finely open set is $1$-quasiopen.
\end{theorem}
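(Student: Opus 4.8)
The plan is to combine the quasi-Lindelöf principle with the local construction from Proposition~\ref{prop:strict subsets}. Let $U\subset X$ be $1$-finely open. For each point $x\in U$, Proposition~\ref{prop:strict subsets} produces a $1$-finely open and $1$-quasiopen set $V_x$ with $x\in V_x\subset U$. The family $\mathcal V:=\{V_x:x\in U\}$ consists of $1$-finely open sets whose union is exactly $U$ (each $V_x\subset U$ and each $x\in V_x$). By Theorem~\ref{thm:quasi-Lindelof}, there is a countable subfamily $\{V_{x_i}\}_{i=1}^{\infty}$ such that, writing $V':=\bigcup_{i=1}^{\infty}V_{x_i}$, we have $\capa_1(U\setminus V')=0$.

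Next I would assemble the quasiopen exceptional set. Each $V_{x_i}$ is $1$-quasiopen, so for a given $\eps>0$ choose an open set $G_i\subset X$ with $\capa_1(G_i)<2^{-i-1}\eps$ such that $V_{x_i}\cup G_i$ is open. Since $\capa_1$ is an outer capacity and $\capa_1(U\setminus V')=0$, choose an open set $G_0\supset U\setminus V'$ with $\capa_1(G_0)<\eps/2$. Set $G:=\bigcup_{i=0}^{\infty}G_i$; then $G$ is open and by countable subadditivity $\capa_1(G)<\eps$. It remains to check that $U\cup G$ is open: indeed
\[
U\cup G=\Bigl(V'\cup(U\setminus V')\Bigr)\cup G
=\bigcup_{i=1}^{\infty}(V_{x_i}\cup G_i)\ \cup\ \Bigl((U\setminus V')\cup G_0\Bigr)\ \cup\ G,
\]
and $U\setminus V'\subset G_0\subset G$, so $(U\setminus V')\cup G_0\subset G$ is already absorbed; each $V_{x_i}\cup G_i$ is open, and $G$ is open, so $U\cup G$ is a union of open sets, hence open. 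Since $\eps>0$ was arbitrary, $U$ is $1$-quasiopen.

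The main obstacle, and the reason the theorem is nontrivial, is already packaged into the two inputs: producing, around each point of a $1$-finely open set, a set that is simultaneously $1$-finely open \emph{and} $1$-quasiopen (Proposition~\ref{prop:strict subsets}, which rests on the weak Cartan property), and the quasi-Lindelöf reduction to countably many such sets (Theorem~\ref{thm:quasi-Lindelof}, which rests on the fine Kellogg property). Given those, the argument above is just a bookkeeping assembly of a single open exceptional set from the countably many quasiopen exceptional sets together with the $\capa_1$-null remainder $U\setminus V'$; the only point to watch is that the union of the $G_i$ must have small capacity, which is exactly why the weights $2^{-i-1}\eps$ were chosen.
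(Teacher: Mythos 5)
Your proof is correct and follows essentially the same route as the paper: apply Proposition~\ref{prop:strict subsets} to get $1$-finely open, $1$-quasiopen sets $V_x\subset U$, invoke the quasi-Lindel\"of principle to extract a countable subfamily covering $U$ up to a $\capa_1$-null set, and assemble the open exceptional set. The paper simply states that a countable union of $1$-quasiopen sets is $1$-quasiopen and that the null remainder is $1$-quasiopen by outer regularity, whereas you spell out the $2^{-i-1}\eps$ bookkeeping explicitly; otherwise the arguments coincide.
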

\begin{proof}
Let $U\subset X$ be $1$-finely open. For every $x\in U$, by Proposition \ref{prop:strict subsets}
we find a $1$-finely open and $1$-quasiopen set $V_x$ such that $x\in V_x\subset U$.
By the quasi-Lindel\"of principle (Theorem \ref{thm:quasi-Lindelof})
we find a countable subcollection $\{V_i\}_{i=1}^{\infty}$
and a $\capa_1$-negligible set $N\subset U$ such that $U=\bigcup_{i=1}^{\infty}V_i\cup N$.
The set $N$ is $1$-quasiopen since $\capa_1$ is an outer capacity, and
since a countable union of $1$-quasiopen sets is easily seen to be
$1$-quasiopen, $U$ is $1$-quasiopen.
\end{proof}

Combining this with Proposition \ref{prop:quasiopen is finely open}
and Lemma \ref{lem:stability of quasiopen sets}, we have a characterization
of $1$-quasiopen and $1$-finely open sets by means of each other.

\begin{corollary}
	A set $U\subset X$ is $1$-quasiopen if and only if it is the union of a $1$-finely
	open set and a $\mathcal H$-negligible set.
\end{corollary}

\begin{proposition}\label{prop:capacity of fine closure}
If $A\Subset D\subset X$, then
\[
\rcapa_1(A,D)=\rcapa_1(\overline{A}^1,D).
\]
\end{proposition}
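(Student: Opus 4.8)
The inequality $\rcapa_1(A,D) \le \rcapa_1(\overline{A}^1,D)$ is trivial from monotonicity of the variational capacity, since $A \subset \overline{A}^1$. So the content is the reverse inequality $\rcapa_1(\overline{A}^1,D) \le \rcapa_1(A,D)$. The plan is to take an arbitrary admissible function $u \in N^{1,1}(X)$ for $\rcapa_1(A,D)$ — that is, $u \ge 1$ on $A$, $u = 0$ on $X \setminus D$, with upper gradient $g_u$ — and to show that (a suitable representative of) $u$ is in fact admissible for $\rcapa_1(\overline{A}^1, D)$, i.e. $u \ge 1$ on all of $\overline{A}^1$, without increasing the energy $\int_X g_u\,d\mu$. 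Taking the infimum over such $u$ then gives the claim.

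First I would pass to a good pointwise representative. Since $A \Subset D$, I may assume (using a Lipschitz cutoff supported in $D$, or truncation) that $0 \le u \le 1$ on $X$ and that $u \in \BV(X)$, indeed $u \in N^{1,1}(X)$ with compact support in $D$. The key point is that the set $\{u^\wedge = 1\}$ (or more robustly $\{u^\vee \ge 1\}$, or the set where $u$ equals $1$ in an appropriate fine/quasicontinuous sense) is $1$-finely closed and contains $A$ up to a set of capacity zero; hence it contains $\overline{A}^1$ up to a capacity-null set. Concretely: by the fine Kellogg property (Corollary \ref{cor:fine Kellogg}) and the fact that $1$-quasicontinuity combined with fine continuity controls superlevel sets, one shows that the set $\{x : u^\wedge(x) \ge 1\}$ differs from a $1$-finely closed set by a $\mathcal H$-negligible (equivalently $\capa_1$-null) set. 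Since $A$ is contained in this set (as $u \ge 1$ on $A$), and $\overline{A}^1$ is the smallest $1$-finely closed set containing $A$, we get $\overline{A}^1 \subset \{u^\wedge \ge 1\} \cup N$ for some $N$ with $\capa_1(N) = 0$.

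Next I would remove the exceptional set $N$. By \eqref{eq:null sets of Hausdorff measure and capacity}, $\mathcal H(N) = 0$, so there is $v \in N^{1,1}(X)$ with $v \ge 1$ on $N$, $v \ge 0$, and $\|v\|_{N^{1,1}(X)}$ arbitrarily small; multiplying by the Lipschitz cutoff $\eta$ that is $1$ near $\overline{A}^1$ and supported in $D$ (possible since $\overline{A}^1 \subset \overline{D}$ and $A \Subset D$ — here I must check $\overline{A}^1 \Subset D$, which follows because the $1$-fine closure is contained in the metric closure $\overline{A} \Subset D$), I may assume $\supp v \Subset D$. Then $u + v$ is admissible for $\rcapa_1(\overline{A}^1, D)$ and $\int_X g_{u+v}\,d\mu \le \int_X g_u\,d\mu + \varepsilon$. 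Letting $\varepsilon \to 0$ and then taking the infimum over $u$ yields $\rcapa_1(\overline{A}^1, D) \le \rcapa_1(A, D)$.

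The main obstacle is the first step of the second paragraph: showing that the superlevel set $\{u^\wedge \ge 1\}$ of an admissible (quasicontinuous) Newton–Sobolev or $\BV$ function is, up to capacity zero, $1$-finely closed. The clean way is to invoke that $u$ has a $1$-quasicontinuous representative which is also $1$-finely continuous $1$-quasi-everywhere (a standard consequence of the definitions together with the weak Cartan / fine Kellogg circle of results established above, or directly from the quasicontinuity statement Proposition \ref{prop:quasisemicontinuity of BV} applied to $u$ and $-u$); at a point of fine continuity where $u^\wedge = 1$ that fails to lie in $\overline{A}^1$ one derives a contradiction with fine openness of the complement, while the bad points form a capacity-null set. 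I would also need to be slightly careful that the representative used for "$u \ge 1$ on $A$" is consistent with the one used to form $\{u^\wedge \ge 1\}$ — but since changing $u$ on a $\mu$-null set does not change $\int g_u$ and the statement "$u \ge 1$ on $A$" in the definition of $\rcapa_1$ is about the Newton–Sobolev representative (defined everywhere), this matching is routine. Everything else — truncation, cutoffs, subadditivity of $\int g$ — is standard.
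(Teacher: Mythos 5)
Your proof is correct in substance, but it takes a genuinely different (and much longer) route than the paper. The paper's proof is a one-line citation: \cite[Proposition 3.3]{L-Fed} already gives $\rcapa_1(A,D)=\rcapa_1(\overline{A}^1\cap D,D)$ for arbitrary $A\subset D$, and the only new content is that the hypothesis $A\Subset D$ forces $\overline{A}^1\subset\overline{A}\subset D$, so $\overline{A}^1\cap D=\overline{A}^1$. You instead reprove the substance of that identity directly. Your core mechanism is sound and can in fact be made cleaner than you present it: since $u\in N^{1,1}(X)$ is $1$-quasicontinuous, the sublevel set $\{u<1\}$ is $1$-quasiopen, so by Proposition~\ref{prop:quasiopen is finely open} it equals $V\cup N$ with $V$ $1$-finely open and $\mathcal H(N)=0$; then $X\setminus V$ is $1$-finely closed, contains $A$, hence contains $\overline{A}^1$, and $\overline{A}^1\setminus\{u\ge 1\}\subset N$ has capacity zero. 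The correction step using a small-capacity function cut off in $D$ then finishes the argument. Note that this does not actually require the fine Kellogg property (Corollary~\ref{cor:fine Kellogg}) as you suggest — quasicontinuity of Newton–Sobolev functions plus Proposition~\ref{prop:quasiopen is finely open} suffices — and since $u$ is already a pointwise-defined, quasicontinuous Newton–Sobolev representative, working through $u^{\wedge}$ and $\BV$ quasicontinuity (Proposition~\ref{prop:quasisemicontinuity of BV}) is an unnecessary detour. Also, in your cutoff step you wrote $\overline{A}^1\subset\overline{D}$, which should read $\overline{A}^1\subset\overline{A}\subset D$ (this is exactly the inclusion the paper records). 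So: correct, but essentially a reproof of a cited prior result rather than a use of it; the paper's approach buys brevity, yours makes the mechanism visible.
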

\begin{proof}
By \cite[Proposition 3.3]{L-Fed}, we have
\[
\rcapa_1(A,D)=\rcapa_1(\overline{A}^1\cap D,D).
\]
Since clearly $\overline{A}^1\subset \overline{A}\subset D$, we have the result.
\end{proof}

Now we prove the Choquet property for $p=1$; for  the case $1<p<\infty$
see \cite[Theorem 1.2]{BBL-CCK}.

\begin{theorem}\label{thm:Choquet property}
Let $A\subset X$ and let $\eps>0$.
Then there exists an open set $W\subset X$ such that $W\cup b_1 A=X$
and $\capa_1(W\cap A)<\eps$.
\end{theorem}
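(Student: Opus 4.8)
We want, for given $A\subset X$ and $\eps>0$, an open set $W$ with $W\cup b_1A=X$ and $\capa_1(W\cap A)<\eps$. The natural strategy is to produce $W$ as the complement of (the closure of) a small perturbation of $X\setminus b_1A$, using the fine Kellogg property together with the fact that $1$-finely open sets are $1$-quasiopen. More precisely, first I would consider the set $U:=\fint(X\setminus A)$, the $1$-fine interior of the complement of $A$. By definition $U$ is $1$-finely open, so by Theorem~\ref{thm:finely open is quasiopen} it is $1$-quasiopen: there is an open set $G\subset X$ with $\capa_1(G)<\eps$ and $U\cup G$ open. Set $W:=U\cup G$. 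Then $W$ is open and $W\cap A\subset (U\cap A)\cup G$. Since $U\subset X\setminus A$ we have $U\cap A=\emptyset$, so $W\cap A\subset G$ and hence $\capa_1(W\cap A)\le\capa_1(G)<\eps$, giving the second requirement immediately.

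The remaining and real issue is the covering statement $W\cup b_1A=X$, equivalently $X\setminus b_1A\subset W=U\cup G$. The key point is to show that $X\setminus b_1A$ differs from $U$ only by a set of $1$-capacity zero, so that after enlarging $G$ (harmlessly, since $\capa_1$ is an outer capacity) to absorb that null set we get the inclusion. Now $x\in X\setminus b_1A$ means precisely that $A$ is $1$-thin at $x$; I must relate ``$A$ is $1$-thin at $x$'' to ``$x$ lies in the $1$-fine interior of $X\setminus A$''. One direction is clear: if $x\in U=\fint(X\setminus A)$ then $X\setminus U$ is $1$-thin at $x$, and since $A\subset X\setminus U$, also $A$ is $1$-thin at $x$, i.e. $x\notin b_1A$; thus $U\subset X\setminus b_1A$. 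For the converse I want to show $\capa_1\big((X\setminus b_1A)\setminus U\big)=0$. A point $x$ in this difference has $A$ $1$-thin at $x$ but $x\notin\fint(X\setminus A)$. Here is where the fine Kellogg property enters: the set $F:=(X\setminus b_1A)\setminus U$ should itself be shown to be $1$-thin at each of its own points — because at a point $x\in F$, $A$ is thin, and one expects $F\setminus\{x\}$ (or $F$) to be ``controlled'' by $A$ near $x$ up to $1$-thin perturbations, hence also thin at $x$; then $F\cap b_1F=\emptyset$, and Theorem~\ref{thm:fine Kellogg property} forces $\capa_1(F)=0$.

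The delicate step is precisely the claim that $F$ is $1$-thin at each of its points; this is the heart of the argument and I expect it to be the main obstacle. The cleanest route is: if $x\in F$, then $A$ is $1$-thin at $x$, so by Lemma~\ref{lem:quasiopen contain qo and finely open set} applied to the $1$-quasiopen set $X\setminus A$ (note $X\setminus A$ is trivially $1$-quasiopen as it contains the $1$-quasiopen $U$ together with... or more simply, apply the weak Cartan property / Proposition~\ref{prop:strict subsets}-type reasoning) one obtains a $1$-finely open set $V$ with $x\in V\subset X\setminus A$. But then $V\subset U$ by maximality of the fine interior, so $x\in U$, contradicting $x\in F$. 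Hence $F=\emptyset$, which is even stronger than $\capa_1(F)=0$ and completes the proof. I should double-check that Lemma~\ref{lem:quasiopen contain qo and finely open set} is applicable: it requires $x\in V_0\setminus b_1(X\setminus V_0)$, i.e. here $x\in(X\setminus A)\setminus b_1A$, which is exactly the hypothesis $x\in F\subset X\setminus b_1A$ (and $x\notin A$ since $A$ thin at $x$ forces density zero, but we need honest non-membership — if $\capa_1(\{x\})>0$ then $x\in b_1\{x\}\subset b_1A$ when $x\in A$ by Proposition~\ref{prop:positive capacity implies thickness}, contradiction, and if $\capa_1(\{x\})=0$ we may remove $x$ from $A$ without changing $b_1A$ or the capacity estimate). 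Assembling: $X\setminus b_1A=U\subset W$, hence $W\cup b_1A=X$, and $\capa_1(W\cap A)\le\capa_1(G)<\eps$.
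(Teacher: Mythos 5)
Your set--up is a natural one, and the first half works: $U:=\fint(X\setminus A)$ is $1$-finely open, hence $1$-quasiopen by Theorem~\ref{thm:finely open is quasiopen}, giving an open $W=U\cup G$ with $\capa_1(W\cap A)\le\capa_1(G)<\eps$ since $U\cap A=\emptyset$; and $U\subset X\setminus b_1 A$ by monotonicity of thinness. But the ``cleanest route'' you propose to handle the remaining inclusion is wrong, and the fallback you sketch leaves the genuinely hard step unaddressed.

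Concretely: you claim $F:=(X\setminus b_1 A)\setminus U=\emptyset$ by applying Lemma~\ref{lem:quasiopen contain qo and finely open set} with $V_0=X\setminus A$. That lemma requires $V_0$ to be $1$-quasiopen, and for an arbitrary $A\subset X$ the set $X\setminus A$ has no reason to be $1$-quasiopen; containing a $1$-quasiopen subset does not make a set $1$-quasiopen. Moreover, $F=\emptyset$ is simply false in general: since $U\cap A=\emptyset$ and $A\setminus b_1 A\subset X\setminus b_1 A$, one has $A\setminus b_1 A\subset F$, and $A\setminus b_1 A$ is merely a $\capa_1$-null set (by the fine Kellogg property), not the empty set. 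Your fallback idea --- show $F$ is $1$-thin at each of its points and invoke Theorem~\ref{thm:fine Kellogg property} --- can in principle be made to work, but the thinness of $F$ at a point $x\in F$ is not at all automatic: you know $A$ is thin at $x$, but you would also need $b_1 A$ to be thin at $x$, i.e.\ that $X\setminus b_1 A$ is $1$-finely open. That is precisely the key lemma the paper proves at the start of its argument, via Proposition~\ref{prop:capacity of fine closure}: for $x\notin b_1 A$ one bounds $\rcapa_1(b_1 A\cap B(x,r),B(x,2r))$ by $\rcapa_1(\overline{A\cap B(x,r)}^1,B(x,2r))=\rcapa_1(A\cap B(x,r),B(x,2r))$. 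With that in hand, the paper takes $W$ to be $(X\setminus b_1 A)\cup G$ (no detour through $\fint(X\setminus A)$), so $W\cup b_1 A=X$ trivially and $W\cap A\subset(A\setminus b_1 A)\cup G$, and the fine Kellogg property finishes. If you insist on keeping $U=\fint(X\setminus A)$, the correct observation is that the fine openness of $X\setminus b_1 A$ yields $\overline{A}^1=A\cup b_1 A$, hence $F=\overline{A}^1\setminus b_1 A=A\setminus b_1 A$, and Corollary~\ref{cor:fine Kellogg} then gives $\capa_1(F)=0$ directly --- but this patch uses the same Proposition~\ref{prop:capacity of fine closure} that your argument omits.
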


\begin{proof}
Let $x\in X\setminus b_1 A$. Clearly for all $r>0$,
\[
b_1 A\cap B(x,r)\subset b_1(A\cap B(x,r))\subset \overline{A\cap B(x,r)}^1,
\]
and so by Proposition \ref{prop:capacity of fine closure},
\begin{align*}
&\limsup_{r\to 0} r\frac{\rcapa_1(b_1 A\cap B(x,r),B(x,2r))}{\mu(B(x,r))}\\
&\qquad\le \limsup_{r\to 0} r\frac{\rcapa_1(\overline{A\cap B(x,r)}^1,B(x,2r))}{\mu(B(x,r))}\\
&\qquad= \limsup_{r\to 0} r\frac{\rcapa_1(A\cap B(x,r),B(x,2r))}{\mu(B(x,r))}=0.
\end{align*}
Thus $X\setminus b_1 A$ is a $1$-finely open set, and then it is also $1$-quasiopen by Theorem \ref{thm:finely open is quasiopen}.
Take an open $G\subset X$ such that $\capa_1(G)<\eps$
and $(X\setminus b_1 A)\cup G=:W$ is open.
Now $W\cup b_1 A=X$, and by the fine Kellogg property
(Corollary \ref{cor:fine Kellogg}),
\[
\capa_1(W\cap A)\le \capa_1((A\setminus b_1 A)\cup G)=\capa_1(G)<\eps.
\]
\end{proof}

\begin{remark}
In the case $p>1$, one seems to need
a strong version of the Cartan property (involving only one superminimizer function, instead of two)
to deduce the Choquet property; see \cite{BBL-CCK,BBL-WC}. For $p=1$ we do not know whether such
a Cartan property holds,
though a result in that vein was given in the proof of \cite[Proposition 5.8]{L-Fed}.
However, the weak Cartan property is enough for proving
the existence of $1$-strict subsets as in Proposition \ref{prop:strict subsets},
and this combined with the quasi-Lindel\"of principle and the
fine Kellogg property is then enough for
proving the Choquet property.
However, the same would not work in the case $p>1$, since there the
Choquet property is needed for proving the fine Kellogg property.
\end{remark}

\paragraph{Acknowledgments.}
Part of the research for this paper was conducted during the author's visit to Link\"oping
University, and he wishes to thank this institution for its hospitality.
During the visit the research was funded by a grant from the Finnish
Cultural Foundation.

\noindent Address:\\

\noindent University of Jyvaskyla\\
Department of Mathematics and Statistics\\
P.O. Box 35, FI-40014 University of Jyvaskyla, Finland\\
E-mail: {\tt panu.k.lahti@jyu.fi}


\begin{thebibliography}{ACMM}

\bibitem{AH}D. Adams and L. I. Hedberg,
\textit{Function spaces and potential theory},
Grundlehren der Mathematischen Wissenschaften, 314. Springer-Verlag, Berlin,
1996. xii+366 pp.

\bibitem{AFP}L. Ambrosio, N. Fusco, and D. Pallara,
\textit{Functions of bounded variation and free discontinuity problems.}
Oxford Mathematical Monographs. The Clarendon Press, Oxford University Press, New York, 2000.

\bibitem{BB}A. Bj\"orn and J. Bj\"orn,
\textit{Nonlinear potential theory on metric spaces},
EMS Tracts in Mathematics, 17. European Mathematical Society (EMS), Z\"urich, 2011. xii+403 pp.

\bibitem{BB-OD}A. Bj\"orn and J. Bj\"orn,
\textit{Obstacle and Dirichlet problems on arbitrary nonopen sets in metric spaces, and fine topology},
Rev. Mat. Iberoam. 31 (2015), no. 1, 161--214.

\bibitem{BB-cap}A. Bj\"orn and J. Bj\"orn,
\textit{The variational capacity with respect to nonopen sets in metric spaces},
Potential Anal. 40 (2014), no. 1, 57--80.

\bibitem{BBL-SS}A. Bj\"orn, J. Bj\"orn, and V. Latvala,
\textit{Sobolev spaces, fine gradients and quasicontinuity on quasiopen sets},
Ann. Acad. Sci. Fenn. Math. 41 (2016), no. 2, 551--560.

\bibitem{BBL-CCK}A. Bj\"orn, J. Bj\"orn, and V. Latvala,
\textit{The Cartan, Choquet and Kellogg properties for the fine topology on metric spaces},
to appear in J. Anal. Math.

\bibitem{BBL-WC}A. Bj\"orn, J. Bj\"orn, and V. Latvala,
\textit{The weak Cartan property for the p-fine topology on metric spaces},
Indiana Univ. Math. J. 64 (2015), no. 3, 915--941.

\bibitem{BBS2}A. Bj\"orn, J. Bj\"orn, and N. Shanmugalingam,
\textit{The Dirichlet problem for p-harmonic functions on metric spaces},
J. Reine Angew. Math. 556 (2003), 173--203.

\bibitem{BBS3}A. Bj\"orn, J. Bj\"orn, and N. Shanmugalingam,
\textit{The Dirichlet problem for p-harmonic functions with respect to the Mazurkiewicz boundary, and new capacities},
J. Differential Equations 259 (2015), no. 7, 3078--3114.

\bibitem{BDG}E. Bombieri, E. De Giorgi, and E. Giusti,
\textit{Minimal cones and the Bernstein problem},
Invent. Math. 7 1969 243--268.

\bibitem{CDLP}M. Carriero, G. Dal Maso, A. Leaci, and E. Pascali,
\textit{Relaxation of the nonparametric plateau problem with an obstacle},
J. Math. Pures Appl. (9) 67 (1988), no. 4, 359--396. 

\bibitem{EvaG92}L. C. Evans and R. F. Gariepy,
\textit{Measure theory and fine properties of functions},
Studies in Advanced Mathematics series, CRC Press, Boca Raton, 1992.

\bibitem{Fed}H. Federer,
\textit{Geometric measure theory},
Die Grundlehren der mathematischen Wissenschaften, Band 153 Springer-Verlag New York Inc., New York 1969 xiv+676 pp. 

\bibitem{Giu84}E. Giusti,
\textit{Minimal surfaces and functions of bounded variation},
Monographs in Mathematics, 80. Birkh\"auser Verlag, Basel, 1984. xii+240 pp.

\bibitem{HaKi}H. Hakkarainen and J. Kinnunen,
\textit{The BV-capacity in metric spaces},
Manuscripta Math. 132 (2010), no. 1-2, 51--73.

\bibitem{HKLS}H. Hakkarainen, R. Korte, P. Lahti, and N. Shanmugalingam,
\textit{Stability and continuity of functions of least gradient},
Anal. Geom. Metr. Spaces 3 (2015), 123--139.

\bibitem{Hei}J. Heinonen,
\textit{Lectures on analysis on metric spaces},
Universitext. Springer-Verlag, New York, 2001. x+140 pp.

\bibitem{HKM-CFT}J. Heinonen, T. Kilpel\"ainen, and J. Mal\'y,
\textit{Connectedness in fine topologies},
Ann. Acad. Sci. Fenn. Ser. A I Math. 15 (1990), no. 1, 107--123.

\bibitem{HKM}J. Heinonen, T. Kilpel\"ainen, and O. Martio,
\textit{Nonlinear potential theory of degenerate elliptic equations},
Unabridged republication of the 1993 original. Dover Publications, Inc., Mineola, NY, 2006. xii+404 pp.

\bibitem{HK}J. Heinonen and P. Koskela,
\textit{Quasiconformal maps in metric spaces with controlled geometry},
Acta Math. 181 (1998), no. 1, 1--61.

\bibitem{How} J. D. Howroyd,
\textit{On dimension and on the existence of sets of finite positive Hausdorff measure},
Proc. London Math. Soc. (3) 70 (1995), no. 3, 581--604.

\bibitem{KiMa}J. Kinnunen and O. Martio,
\textit{Nonlinear potential theory on metric spaces},
Illinois J. Math. 46 (2002), no. 3, 857--883.

\bibitem{KLLS}R. Korte, P. Lahti, X. Li, and N. Shanmugalingam,
\textit{Notions of Dirichlet problem for functions of least gradient in metric measure spaces},
preprint 2016.\\
https://arxiv.org/abs/1612.06078

\bibitem{L-Fed}P. Lahti,
\textit{A Federer-style characterization of sets of finite perimeter on metric spaces},
Calc. Var. Partial Differential Equations 56 (2017), no. 5, 56:150.

\bibitem{L-FC}P. Lahti,
\textit{A notion of fine continuity for BV functions on metric spaces},
Potential Anal. 46 (2017), no. 2, 279--294.

\bibitem{L-WC}P. Lahti,
\textit{Superminimizers and a weak Cartan property for $p=1$ in metric spaces},
preprint 2017.\\
https://arxiv.org/abs/1706.01873

\bibitem{LaSh}P. Lahti and N. Shanmugalingam,
\textit{Fine properties and a notion of quasicontinuity for $\BV$ functions on metric spaces},
Journal de Math\'ematiques Pures et Appliqu\'ees, Volume 107, Issue 2, February
2017, Pages 150--182.

\bibitem{Lar}D. G. Larman, 
\textit{On Hausdorff measure in finite-dimensional compact metric spaces},
Proc. London Math. Soc. (3) 17 1967 193--206. 

\bibitem{Mal}J. Mal\'{y},
\textit{Coarea integration in metric spaces},
NAFSA 7--Nonlinear analysis, function spaces and applications.
Vol. 7, 148--192, Czech. Acad. Sci., Prague, 2003.

\bibitem{MZ}J. Mal\'{y} and W. Ziemer,
\textit{Fine regularity of solutions of elliptic partial differential equations},
Mathematical Surveys and Monographs, 51. American Mathematical Society, Providence, RI, 1997. xiv+291 pp.

\bibitem{PM}P. Mattila,
\textit{Geometry of sets and measures in Euclidean spaces. Fractals and rectifiability},
Cambridge Studies in Advanced Mathematics, 44. Cambridge University Press,
Cambridge, 1995. xii+343 pp.

\bibitem{MRL}J. M. Maz\'on, J. D. Rossi, and S. Segura de Le\'on,
\textit{Functions of least gradient and $1$-harmonic functions},
Indiana Univ. Math. J. 63 No. 4 (2014), 1067--1084.

\bibitem{MST}A. Mercaldo, S. Segura de Le\'on, and C. Trombetti,
\textit{On the solutions to $1$-Laplacian equation with $L^1$ data},
J. Funct. Anal. 256 (2009), no. 8, 2387--2416. 

\bibitem{M}M.~Miranda, Jr.,
\textit{Functions of bounded variation on ``good'' metric spaces},
J. Math. Pures Appl. (9) 82  (2003),  no. 8, 975--1004.

\bibitem{Rog}C. A. Rogers,
\textit{Hausdorff measures},
Cambridge University Press, London-New York 1970 viii+179 pp. 

\bibitem{Rud}W. Rudin,
\textit{Functional analysis. Second edition},
International Series in Pure and Applied Mathematics.
McGraw-Hill, Inc., New York, 1991. xviii+424 pp.

\bibitem{S2}N. Shanmugalingam,
\textit{Harmonic functions on metric spaces},
Illinois J. Math. 45 (2001), no. 3, 1021--1050.

\bibitem{S}N. Shanmugalingam,
\textit{Newtonian spaces: An extension of Sobolev spaces to metric measure spaces},
Rev. Mat. Iberoamericana 16(2) (2000), 243--279.

\bibitem{SWZ}P. Sternberg, G. Williams, and W. Ziemer,
\textit{Existence, uniqueness, and regularity for functions of least gradient},
J. Reine Angew. Math. 430 (1992), 35--60. 

\bibitem{Tur}B. O. Turesson,
\textit{Nonlinear potential theory and weighted Sobolev spaces},
Lecture Notes in Mathematics, 1736. Springer-Verlag, Berlin, 2000. xiv+173 pp.

\bibitem{Zie89}W. P. Ziemer,
\textit{Weakly differentiable functions. Sobolev spaces and functions of bounded variation},
Graduate Texts in Mathematics, 120. Springer-Verlag, New York, 1989.

\end{thebibliography}
\end{document}